\newcommand{\weg}[1]{}
 \newcommand{\const}{\mathrm{const}}
  \newcommand{\diag}{\mathrm{diag}}
  \newcommand\bib[1]{\bibitem[#1]{#1}}
\newtheorem{theorem}{Theorem}[section]
\newtheorem{proposition}[theorem]{Proposition}
\newtheorem{corollary}[theorem]{Corollary}
\theoremstyle{definition}
\newtheorem{example}[theorem]{Example}
\theoremstyle{remark}
\newtheorem{remark}[theorem]{Remark}
\numberwithin{equation}{section}
\title{Projectively invariant objects and   the index of the group of affine transformations in the group of projective transformations}
 \author{Vladimir S.  Matveev}
\date{{ Mathematisches Institut,
Fakult\"at f\"ur Mathematik und Informatik,
Friedrich-Schiller-Universit\"at Jena,
07737 Jena, Germany,\\  \url{vladimir.matveev@uni-jena.de}}}
\begin{document}
\maketitle

%
\begin{abstract} 
The paper is grown from the lecture course ``Metric projective geometry'' which I hold at the summer school ``Finsler geometry with applications'' at  Karlovassi, Samos, in 2014, and at the workshop before the  8th  seminar on Geometry and Topology  of the Iranian Mathematical society at the Amirkabir University of Technology in 2015. The goal  of this lecture course was to show  how effective projectively invariant objects  can be used to solve natural and named problems in differential geometry, and  this paper also does it: I give easy new proofs to many   known statements, and also prove the following new statement: on a complete Riemannian manifold of nonconstant curvature  the index of the group of affine transformations in the group of projective transformations is at most two.   
\end{abstract} 
%
%
%
\section{Projective structure.} 
 \subsection{Definition of the projective structure} 
A slightly  informal and ineffective definition of a projective structure is as follows: projective structure on an $n$-dimensional manifold $M$ is a smooth family $\mathcal{F}$ of smooth curves such that
\begin{itemize}
\item at any point and in any direction there exists precisely one curve from this family passing through this point in this direction, and   
\item there exists an affine connection $\nabla=(\Gamma^i_{jk})$
 such that each curve from this   this family, after a proper reparameterisation, is a geodesic of this connection. 
 \end{itemize} 
 
 Note that the equation of geodesics of the connection $\Gamma^i_{jk}$
  and  of its symmetrization $\tilde \Gamma_{jk}^i= \tfrac{1}{2} \left(\Gamma^i_{jk} + \Gamma^i_{kj}\right)$ are clearly the same, since the connection comes symmetrically in the defining equation 
  \begin{equation} \label{geodesic} 
  \ddot \gamma^i + \Gamma_{jk}^i\dot \gamma^k \dot \gamma^j=0\end{equation}  
  of a geodesic; without loss of generality we will 
   therefore always assume that all connections we consider  are torsion-free. 
 
 A simplest example of a projective  structure is the family $\mathcal{F}$ consisting of all straight lines. A slightly more complicated example is when we pick 
 any connection $\Gamma^i_{jk}$ and put $\mathcal{F}$ to be  all geodesics of this connection.  Since  there is (up to a reparameterisation) an unique geodesic of a given connection passing through a given point and tangent to a given direction, the second example suggests  how one can provide a description of all projective structures: one needs to understand what connections  have the same geodesics viewed as unparameterized curves.  
 This understanding is provided by the following theorem, which was proved at least in   \cite{Levi-Civita1896}; we give the answer in the notation of  \cite{Weyl1921}.   
 
 We call   connections having 
  the same geodesics viewed as unparameterized curves \emph{projectively equivalent}.

\begin{theorem}[Levi-Civita 1896]  \label{thm:1}  $\nabla = (\Gamma_{jk}^i)$ is projectively equivalent to 
 $ \bar \nabla = (\Gamma_{jk}^i)$, if and only if there exists an $1$-form $\phi= \phi_i$ such that 
\begin{equation} 
\bar  \Gamma^i_{jk}= \Gamma_{jk}^i+ \phi_k \delta^i_j + \phi_j \delta^i_k.   \label{ast}
 \end{equation} \end{theorem}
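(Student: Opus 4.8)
The plan is to prove the two implications separately, isolating first a chain‑rule identity common to both. For a curve $\gamma(t)$ and a reparameterisation $t=t(\tau)$, put $\sigma(\tau)=\gamma(t(\tau))$ (write $\dot{}=d/dt$). Expanding by the chain rule, for \emph{any} torsion‑free connection $\bar\nabla$ one gets
\[
\frac{d^2\sigma^i}{d\tau^2}+\bar\Gamma^i_{jk}\frac{d\sigma^j}{d\tau}\frac{d\sigma^k}{d\tau}
=\Bigl(\tfrac{dt}{d\tau}\Bigr)^2\Bigl(\ddot\gamma^i+\bar\Gamma^i_{jk}\dot\gamma^j\dot\gamma^k\Bigr)+\dot\gamma^i\,\frac{d^2t}{d\tau^2}.
\]
This, together with \eqref{geodesic}, is the only analytic input I need.

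For the \emph{if} direction, assume \eqref{ast} and let $\gamma$ be an affinely parameterised $\nabla$‑geodesic. Then $\ddot\gamma^i+\bar\Gamma^i_{jk}\dot\gamma^j\dot\gamma^k=(\phi_k\delta^i_j+\phi_j\delta^i_k)\dot\gamma^j\dot\gamma^k=2(\phi_j\dot\gamma^j)\dot\gamma^i$, so the identity reduces to
\[
\frac{d^2\sigma^i}{d\tau^2}+\bar\Gamma^i_{jk}\frac{d\sigma^j}{d\tau}\frac{d\sigma^k}{d\tau}
=\Bigl(\tfrac{d^2t}{d\tau^2}+2(\phi_j\dot\gamma^j)\bigl(\tfrac{dt}{d\tau}\bigr)^2\Bigr)\dot\gamma^i .
\]
Hence $\sigma$ is a $\bar\nabla$‑geodesic once $t(\tau)$ solves $\tfrac{d^2t}{d\tau^2}+2(\phi_j\dot\gamma^j)\bigl(\tfrac{dt}{d\tau}\bigr)^2=0$; with $u=dt/d\tau$ regarded as a function of $t$ this is the linear equation $\tfrac{du}{dt}=-2(\phi_j\dot\gamma^j)u$, solved by $u=\const\cdot\exp\!\bigl(-2\int\phi_j\dot\gamma^j\,dt\bigr)>0$, and $\tau=\int u^{-1}dt$ is an admissible reparameterisation. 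Thus every $\nabla$‑geodesic is, up to reparameterisation, a $\bar\nabla$‑geodesic; since through each point in each direction there passes exactly one unparameterised geodesic of each connection, the two families coincide. (One can also observe that \eqref{ast} is symmetric under $\nabla\leftrightarrow\bar\nabla$, $\phi\mapsto-\phi$, which gives the reverse inclusion directly.)

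For the \emph{only if} direction, assume $\nabla,\bar\nabla$ are projectively equivalent and put $T^i_{jk}:=\bar\Gamma^i_{jk}-\Gamma^i_{jk}$, a smooth $(1,2)$‑tensor field, symmetric in $j,k$ since both connections are torsion‑free. Fix $p$ and $v\in T_pM$, take the $\nabla$‑geodesic $\gamma$ with $\gamma(0)=p$, $\dot\gamma(0)=v$, and reparameterise it to a $\bar\nabla$‑geodesic $\sigma$; the Setup identity, with $\ddot\gamma^i+\bar\Gamma^i_{jk}\dot\gamma^j\dot\gamma^k=T^i_{jk}\dot\gamma^j\dot\gamma^k$, forces $\bigl(\tfrac{dt}{d\tau}\bigr)^2 T^i_{jk}\dot\gamma^j\dot\gamma^k=-\tfrac{d^2t}{d\tau^2}\,\dot\gamma^i$, so $T^i_{jk}\dot\gamma^j\dot\gamma^k$ is proportional to $\dot\gamma^i$; evaluating at $p$ gives
\[
T^i_{jk}(p)\,v^jv^k=\lambda(v)\,v^i\qquad\text{for all }v\in T_pM .
\]
The key point is that $\lambda$ is \emph{linear} in $v$: the map $v\mapsto\lambda(v)v^i$ is a homogeneous quadratic polynomial, and for $i\neq j$ one has, on the dense set $v^iv^j\neq0$, $\lambda(v)=\frac{T^i_{kl}v^kv^l}{v^i}=\frac{T^j_{kl}v^kv^l}{v^j}$, so the polynomial $\lambda(v)v^i$ is divisible by $v^i$ and the quotient is a homogeneous degree‑one polynomial independent of $i$, namely $\lambda$. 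Writing $\lambda(v)=2\phi_jv^j$ with a covector $\phi=\phi_i$, we get $\bigl(T^i_{jk}-\phi_j\delta^i_k-\phi_k\delta^i_j\bigr)v^jv^k=0$ for all $v$; both sides being symmetric in $j,k$, polarisation yields \eqref{ast}. Finally $\phi$ is globally well defined and smooth, since contracting \eqref{ast} gives $\phi_k=\tfrac{1}{n+1}T^i_{ik}$.

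The main obstacle is exactly this last passage, from the pointwise proportionality $T^i_{jk}v^jv^k\parallel v^i$ to the explicit formula \eqref{ast}: one must notice that the a priori merely $v$‑dependent factor $\lambda(v)$ is forced to be linear (the divisibility argument above) and that the recovered $\phi$ is a genuine smooth $1$‑form (the trace formula). The chain‑rule computation, the reparameterisation ODE, and the final polarisation are all routine.
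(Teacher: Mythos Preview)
Your proof is correct and follows essentially the same approach as the paper's sketch: for the ``if'' direction you show the $\bar\nabla$-acceleration of a $\nabla$-geodesic is proportional to its velocity and then reparameterise, and for the ``only if'' direction you establish precisely the linear-algebra fact the paper invokes (that $T^i_{jk}v^jv^k\parallel v$ for all $v$ forces $T^i_{jk}=\phi_j\delta^i_k+\phi_k\delta^i_j$). You simply supply the details the paper leaves to the reader---the explicit reparameterisation ODE, the divisibility argument for linearity of $\lambda(v)$, and the trace formula $\phi_k=\tfrac{1}{n+1}T^i_{ik}$ ensuring smoothness of $\phi$.
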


In the index-free notation, the equation \eqref{ast} reads: 

\begin{equation} \label{ast2}
\bar \nabla_X Y=\nabla_X Y +  \phi(Y)X  + \phi(X)Y. 
 \end{equation}

We see that  the condition that two connections are projectively equivalent is quite a flexible condition:  for a given connection the set  of 
 connections that are projectively equivalent to it is an  infinitely-dimensional affine subspace in the affine space of all connections. We see also that two projectively equivalent connections can coincide at some open nonempty set  and be different on another open nonempty subset; later we will see that when we pass from connections to metrics both properties fail.

The proof of Theorem \ref{thm:1} is pretty straightforward and will be left to the reader: in the  ``$\Rightarrow$''-direction, one observes that after replacing $\Gamma $ by $\bar \Gamma$ related by \eqref{ast} in the equation \eqref{geodesic} of geodesics, the acceleration $\ddot \gamma$  of a solution 
 remains to be proportional to $\dot \gamma$ which implies that the curve $\gamma $ after a proper reparameterisation is a geodesic of $\bar \Gamma$. In the other direction one  should use the following easy fact from linear algebra: if for a 
  symmetric  w.r.t.  low indexes
  the  tensor $T^{i}_{jk}:= \Gamma^i_{jk} - \bar \Gamma^i_{jk}$ we have that for any vector $V$ the vector $T(V,V)= T^k_{ij}V^iV^j$ is proportional to $V$, then 
  $T$ has the form $T^i_{jk}= \phi_k \delta^i_j + \phi_j \delta^i_k$ for some 1-form $\phi$.

\subsection{Projective structures in dimension 2.}\label{projectiveconnection} 

Let us now consider the case of dimension $n=2$ in more details: 
If $n=2$, because of the symmetries $\Gamma_{jk}^i = \Gamma_{kj}^i$,  the components of 
 $ \Gamma_{kj}^i$ in coordinates are  given by  
$\tfrac{n^2(n+1)}{2} = 6$ functions $\Gamma^1_{11}, \Gamma^2_{11}, \Gamma^1_{12}, \Gamma^2_{12}, \Gamma^1_{22}, \Gamma^2_{22}.$ 
  The freedom in choosing the connection in the projective class is the free choice of the components   $\phi_1, \phi_2$. Thus, locally, 
 a projective structure is given by 4 functions  of the coordinates. There are many ways to encode a projective structure by 4 functions; any linear mapping  
  from $\mathbb{R}^6$ (with coordinates $\Gamma^1_{11}, \Gamma^2_{11}, \Gamma^1_{12}, \Gamma^2_{12}, \Gamma^1_{22}, \Gamma^2_{22}$) to $\mathbb{R}^4$   such that  the two-dimensional linear subspace consisting of  $\phi_k \delta^i_j + \phi_j \delta^i_k$ is its kernel, gives such a way. 
  
    Let us consider, following \cite{Beltrami1865}, one way how to encode projective structure by 4 functions; and explain its geometric sense. The following theorem is well-known:

\vspace{1ex} 

\begin{theorem}  Let  $\left[\Gamma_{jk}^i \right]$  be a projective structure on an open subset  $U\subset \mathbb{R}^2(x,y)$. 
Consider the following second order ODE  

\begin{equation}\label{eq.proj.conn.associated}
y''=\underbrace{-\Gamma^2_{11}}_{K_0} + \underbrace{(\Gamma^1_{11}-2\Gamma^2_{12})}_{K_1}y'
+\underbrace{(2\Gamma^1_{12}-\Gamma^2_{22})}_{K_2}y'^2 +\underbrace{ \Gamma^1_{22}}_{K_3} y'^3.
\end{equation}

Then, for every solution $y(x)$ of \eqref{eq.proj.conn.associated} the curve
$(x,y(x))$ is a (reparametrized) geodesic. 
\end{theorem}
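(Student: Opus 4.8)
The plan is to reduce the equation \eqref{eq.proj.conn.associated} to a statement about the acceleration of a geodesic. Recall from \eqref{geodesic} that a curve $\gamma(t)=(x(t),y(t))$ is an (affinely parametrized) geodesic of $\nabla$ precisely when $\ddot\gamma^i+\Gamma^i_{jk}\dot\gamma^j\dot\gamma^k=0$, and — by the $\Rightarrow$ direction of Theorem \ref{thm:1}, or more elementarily by the discussion after it — an \emph{unparametrized} curve is a reparametrized geodesic exactly when along it the acceleration vector $\ddot\gamma$ is proportional to the velocity $\dot\gamma$. So the whole theorem amounts to: if $y(x)$ solves \eqref{eq.proj.conn.associated}, then the plane curve $x\mapsto(x,y(x))$, parametrized by $x$, has acceleration proportional to velocity.

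First I would parametrize the curve by $x$ itself, so $\gamma(x)=(x,y(x))$ with $\dot\gamma=(1,y')$ and $\ddot\gamma=(0,y'')$. Plugging into \eqref{geodesic} with the free parameter $t=x$ gives two expressions $A^1:=\Gamma^1_{11}+2\Gamma^1_{12}y'+\Gamma^1_{22}y'^2$ and $A^2:=y''+\Gamma^2_{11}+2\Gamma^2_{12}y'+\Gamma^2_{22}y'^2$; these are exactly the components $\ddot\gamma^i+\Gamma^i_{jk}\dot\gamma^j\dot\gamma^k$. The curve is a reparametrized geodesic iff the vector $(A^1,A^2)$ is proportional to $(1,y')$, i.e. iff $A^2=y'\cdot A^1$. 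I would then simply expand $A^2-y'A^1=0$ and observe that it rearranges to
\begin{equation*}
y''=-\Gamma^2_{11}+(\Gamma^1_{11}-2\Gamma^2_{12})y'+(2\Gamma^1_{12}-\Gamma^2_{22})y'^2+\Gamma^1_{22}y'^3,
\end{equation*}
which is precisely \eqref{eq.proj.conn.associated}. Hence every solution of \eqref{eq.proj.conn.associated} traces a reparametrized geodesic. Two small caveats to address: the computation only sees curves with $\dot x\neq 0$ (those not tangent to the $y$-direction), and one should note that the right-hand side of \eqref{eq.proj.conn.associated} only involves the combinations $K_0,\dots,K_3$, which is the promised encoding of the projective structure by four functions — indeed a direct check shows these four combinations are annihilated by the substitution $\Gamma\mapsto\phi_k\delta^i_j+\phi_j\delta^i_k$, so they depend only on the projective class, consistent with Theorem \ref{thm:1}.

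The computation here is entirely mechanical, so there is no real obstacle; if anything, the only thing requiring a word of care is the passage from ``acceleration proportional to velocity'' to ``reparametrized geodesic,'' which is the linear-algebra fact already invoked in the discussion following Theorem \ref{thm:1} (applied here in the trivial one-dimensional form: a vector proportional to a fixed nonzero vector). I would therefore present the proof as: (i) recall the proportionality criterion; (ii) write $\dot\gamma,\ddot\gamma$ for the $x$-parametrization; (iii) expand $A^2=y'A^1$ and match it termwise with \eqref{eq.proj.conn.associated}; (iv) remark on the $\dot x\neq0$ exceptional direction and on the projective invariance of $K_0,\dots,K_3$.
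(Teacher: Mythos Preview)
Your argument is correct and is exactly the standard direct verification. The paper itself does not supply a proof of this theorem --- it simply declares the statement ``well-known'' and passes immediately to the observation about the kernel of the map $(\Gamma^i_{jk})\mapsto(K_0,\dots,K_3)$ --- so there is nothing to compare against; your computation (parametrize by $x$, form the covariant acceleration $(A^1,A^2)=\ddot\gamma^i+\Gamma^i_{jk}\dot\gamma^j\dot\gamma^k$, and reduce proportionality to $(1,y')$ to the single scalar equation $A^2=y'A^1$) is the expected one, and your closing remarks on the exceptional vertical direction and on the projective invariance of $K_0,\dots,K_3$ anticipate precisely the follow-up comments the paper makes. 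One tiny wording point: early on you say ``the acceleration vector $\ddot\gamma$ is proportional to $\dot\gamma$,'' but what you actually use (correctly) is that the \emph{covariant} acceleration $(A^1,A^2)$ is proportional to $\dot\gamma$; you may want to align the phrasing with the computation.
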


\vspace{1ex} 

It is easy to check that 
the mapping 
$$( \Gamma^1_{11}, \Gamma^2_{11}, \Gamma^1_{12}, \Gamma^2_{12}, \Gamma^1_{22}, \Gamma^2_{22})\mapsto \left(-\Gamma^2_{11}, \Gamma^1_{11}-2\Gamma^2_{12}, 
2\Gamma^1_{12}-\Gamma^2_{22}, \Gamma^1_{22}\right)$$
 from $\mathbb{R}^6$  to $\mathbb{R}^4$   has two-dimensional kernel generated by the tensors  of the form $\phi_k \delta^i_j + \phi_j \delta^i_k$.

\begin{corollary} The coefficients $K_0,..., K_3$ of ODE  \eqref{eq.proj.conn.associated} contain all the information of the projective structure: two connections belong to the same projective  iff the corresponding functions $K_0,...,K_3$ coincide. \end{corollary}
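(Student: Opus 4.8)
The plan is to read the corollary off from the Levi-Civita Theorem~\ref{thm:1} combined with the linearity of the map
$$\Phi\colon\ \bigl(\Gamma^1_{11},\Gamma^2_{11},\Gamma^1_{12},\Gamma^2_{12},\Gamma^1_{22},\Gamma^2_{22}\bigr)\ \longmapsto\ \bigl(-\Gamma^2_{11},\ \Gamma^1_{11}-2\Gamma^2_{12},\ 2\Gamma^1_{12}-\Gamma^2_{22},\ \Gamma^1_{22}\bigr),$$
which assigns to the components of a (torsion-free) connection the coefficient tuple $(K_0,K_1,K_2,K_3)$ of the ODE~\eqref{eq.proj.conn.associated}. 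The one computational ingredient is the description of $\ker\Phi$ announced just before the corollary, whose short verification I would include.

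First I would check that the $2$-dimensional subspace $\bigl\{\phi_k\delta^i_j+\phi_j\delta^i_k\bigr\}$ is contained in $\ker\Phi$: plugging $\Gamma^i_{jk}=\phi_k\delta^i_j+\phi_j\delta^i_k$ gives $\Gamma^1_{11}=2\phi_1$, $\Gamma^2_{11}=0$, $\Gamma^1_{12}=\phi_2$, $\Gamma^2_{12}=\phi_1$, $\Gamma^1_{22}=0$, $\Gamma^2_{22}=2\phi_2$, and each of the four components of $\Phi$ then vanishes. Since $\Phi$ is clearly surjective (the four output components involve pairwise disjoint groups of input variables), $\dim\ker\Phi=6-4=2$, so $\ker\Phi$ equals $\bigl\{\phi_k\delta^i_j+\phi_j\delta^i_k\bigr\}$ exactly.

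Now take two torsion-free connections $\nabla=(\Gamma^i_{jk})$ and $\bar\nabla=(\bar\Gamma^i_{jk})$ on the open set $U$, with associated ODE coefficients $\Phi(\Gamma)=(K_0,\dots,K_3)$ and $\Phi(\bar\Gamma)=(\bar K_0,\dots,\bar K_3)$ computed at each point. Because $\Phi$ is linear, the pointwise equality $(K_0,\dots,K_3)\equiv(\bar K_0,\dots,\bar K_3)$ on $U$ is equivalent to $\Phi(\bar\Gamma-\Gamma)\equiv 0$, i.e.\ to the tensor $T^i_{jk}:=\bar\Gamma^i_{jk}-\Gamma^i_{jk}$ lying in $\ker\Phi$ at every point of $U$. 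By the previous step this means exactly that at each point there is a $1$-form $\phi$ with $T^i_{jk}=\phi_k\delta^i_j+\phi_j\delta^i_k$; moreover $\phi$ depends smoothly on the point, since it is recovered from $T$ by a fixed linear formula (e.g.\ $\phi_1=T^2_{12}$, $\phi_2=T^1_{12}$, or dimension-independently $\phi_i=\tfrac1{n+1}T^j_{ij}$). This is precisely relation~\eqref{ast}, which by Theorem~\ref{thm:1} holds if and only if $\nabla$ and $\bar\nabla$ are projectively equivalent. Chaining the two equivalences proves the corollary.

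I do not anticipate any genuine obstacle: the whole argument is linear algebra on $\mathbb{R}^6$ together with a reference to Theorem~\ref{thm:1}. The only detail one should not skip is the smoothness (indeed, the linear dependence on $T$) of the recovered $1$-form $\phi$, so that the pointwise statement upgrades to a statement about $1$-forms on $U$, matching the formulation of Theorem~\ref{thm:1}.
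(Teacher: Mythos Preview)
Your proof is correct and follows exactly the approach the paper has in mind: the paper notes just before the corollary that the linear map $\Phi$ has kernel precisely the subspace $\{\phi_k\delta^i_j+\phi_j\delta^i_k\}$, and the corollary is then immediate from Theorem~\ref{thm:1}. You have simply written out the verification of this kernel computation and the chain of equivalences explicitly, including the pleasant extra remark on smoothness of $\phi$.
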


\begin{example}  The   flat projective structure $[\Gamma_{jk}^i\equiv 0]$  corresponds to the ODE $y''=0$. The solutions of this ODE 
 are   $y(x)= a x + b$, and the curves $x\mapsto (x,y(x))= (x, ax + b)$ are indeed straight lines.  
\end{example} 
\vspace{2ex} 

\begin{remark}  Note that the set of curves of the form $t\mapsto (t,y(t))$ is quite big: at any point in any nonvertical 
 direction   there   exists  precisely one such  curve passing through this point in this direction. It is not a projective structure with respect to the ``informal'' definition, since there is no curve of the form $t\mapsto (t,y(t))$ tangent to the vertical direction $(0,1)$, but the difference is minor.  
\end{remark} 

\vspace{2ex}

\begin{remark}  We see a special feature of geodesics of affine connections: they are essentially the same as solutions of the  2nd order ODE $y''= F(x,y,y')$ such that $F$  is polynomial in $y'$ of degree $\le 3$.  In particular, taking  an 
 ODE $y''= F(x,y,y')$ such that $F$ is {\sc not a polynomial} in $y'$ of degree $\le 3$,    the  set of the curves  of the form $(x, y(x))$ are geodesics of no affine connection.  \end{remark} 

\section{Projectively invariant differential operators.}

\subsection{Definition  and trivial examples. }

By    \emph{projectively invariant differential operators}  we understand   differential operators on an    associated tensor bundle of $TM$
 with values in possibly another associated tensor bundle of $TM$,   constructed 
by an affine  connection on $M$ and satisfying the following condition:  if we replace the connection by another connection in the projective class the operator does not change.   

\vspace{1ex} 
  {\bf Not an  Example.} Covariant differentiation of vectors  is a differential operator  (from sections of $TM$ to sections of $T^{(1,1)}M$)  which is   not  projectively invariant:  indeed, 
if we replace $\nabla $ by a projectively equivalent $\bar \nabla$, then the covariant derivative will be changed by \eqref{ast2}.

\vspace{1ex} 
{\bf Trivial  Example.} The outer derivative $\omega \mapsto d\omega$  on the bundle  of $k$-forms with values in the bundle of $k+1$-forms  is projectively invariant. Indeed, it   does not depend on a connection at all. For example, for 1-forms, we have 
$d(adx + bdy)= \left(\tfrac{\partial b}{\partial x} - \tfrac{\partial  a}{\partial y}\right) dx\wedge dy$, and there are  no Christoffel symbols in the formula.

\vspace{1ex}
   Our next goal is to construct four   `nontrivial'  projectively invariant differential  operations, 
    two of them   will be effectively used  later.   In order to do it, we 
  need to introduce/recall   the   bundles of  weighted   tensor fields.

\subsection{ Weighted tensors.}  \label{way}

We assume that our manifold  $M$ is orientable (or we work locally) and fix an orientation. The dimension  $n$ is assumed to be $\ge 2$. We consider the bundle 
 $ \Lambda_nM $ of positive volume forms on $M$. 
 Recall that locally  a volume form is a scew-symmetic 
form of maximal order, in local coordinates $x=(x^1,...,x^n)$ 
one  can always write it as  
$f(x) dx^1 \wedge ... \wedge dx^n$ with $f\ne 0$.  The word ``positive''   means that  if the basis 
$\tfrac{\partial }{\partial x^1},..., \tfrac{\partial }{\partial x^n}$ is positively oriented, which we will always assume later,  then $f(x)>0$.

\vspace{1ex}

Positive volume forms are  naturally  organised  in    a locally trivial   1-dimensional  bundle over our manifold $M$ with the structure group $(\mathbb{R}_{>0}, \cdot)$.  
Let us discuss two natural ways for  a local trivialization of this bundle:

\begin{enumerate} \item  Choose a section in this bundle, i.e., a positive  volume form $\Omega_0=f_0  dx^1 \wedge ... \wedge dx^n$ with $f_0\ne 0$. 
Then, the other sections of this bundle can be thought to be positive functions on the manifold: the form $f   dx^1 \wedge ... \wedge dx^n$  is then essentially the same as the function $\tfrac{f}{f_0}$.  In particular,   if we change coordinates, the ratio $\tfrac{f}{f_0}$   transforms like a  function, since both coefficients, $f$ and $f_0$, are multiplied by the determinant of the Jacobi matrix.   This way to trivialize the bundle of the volume forms  will be actively used later,  and will be very effective when the volume form $f_0  dx^1 \wedge ... \wedge dx^n$ with $f_0\ne 0$ is parallel with respect to some preferred   affine connection in the projective class. Note that this trivialization is actually a global one (provided the volume form $\Omega_0$ 
 is defined on the whole manifold).

 \item In a local coordinate system  $x=(x^1,...,x^n)$, we may think that   the volume form 
 $\Omega = f(x)  dx^1\wedge ...\wedge dx^n $ corresponds to the local  function $f(x)$.   In this case, $f(x)$ can not be viewed as a function on the manifold since  
 its transformation rule is different from that of functions:   a coordinate change  
 $ x= x(y)$  transforms  $f(x)$ to $\det\left(J \right) f(x(y))$, where $  J= \left(\tfrac{dx}{dy}\right)$  is the Jacobi matrix.   
Note that this way to trivialize the bundle can be viewed as  a special case of the previous way, with the form  $ \Omega_0=   dx^1 \wedge ... \wedge dx^n$; though of course this  form $\Omega_0$ depends on the choice of local coordinates.  \end{enumerate}

   Now, take  
    $\alpha \in \mathbb{R}\setminus \{0\}$. Since $t \to t^\alpha$ is an isomorphism of $(\mathbb{R}_{>0}, \cdot)$, 
for any 1-dimensional $(\mathbb{R}_{>0}, \cdot)$-bundle  its power $\alpha$ is well-defined and is also an one-dimensional bundle. We consider 
$\left(\Lambda_n\right)^\alpha M$. It is an 1-dimensional bundle, so its sections locally can be viewed as functions. Again we have two ways to view the sections as functions:

\begin{enumerate} \item[(A)]  Choose   a volume form $\Omega_0=f_0 dx^1\wedge ... \wedge dx^n$,
 and the corresponding  section $\omega = (\Omega)^\alpha$ of
$ \left(\Lambda_n\right)^\alpha M$. Then,   the other sections of this bundle can be thought to be positive functions on the manifold.

 \item[(B)] In local coordinates $x=(x^1,...,x^n)$, we can choose the section  $(dx^1\wedge ...\wedge dx^n)^\alpha$, then the section  
 $\omega = (f(x)  dx^1\wedge ...\wedge dx^n)^\alpha $ corresponds locally to the function $(f(x))^\alpha$. Its transformation rule is different from that of functions:  a coordinate change  
 $ x= x(y)$  transforms  $(f(x))^\alpha$ to $\left(\det\left(\tfrac{dx}{dy}\right)\right)^\alpha  f(x(y))^\alpha$.   
 
 \end{enumerate}

\subsection{ Definition of weighted tensors and their covariant derivative.}  
 By a   {$(p,q)$-{\it tensor field of projective weight $k$}} we understand a section of the following bundle:

$$
T^{(p,q)}M\otimes \left(\Lambda_n\right)^{\tfrac{k}{n+1}} M    \ \ (  \textrm{notation} \  :=  T^{(p,q)}M(k))
$$

\vspace{1ex} 
  If we choose a preferred volume form on the manifold, the sections of  $T^{(p,q)}M(k)$  can be identified with 
$(p,q)$-tensor fields. The identification depends of course on the choice of the volume form.    Actually, if the chosen   volume form is parallel w.r.t. to a connection, then even the formula for the 
covariant derivative of this section coincides with that   for tensor fields.

\vspace{1ex} 
     If we do not have a preferred volume form on the manifold, in a local coordinate system one can choose  $(dx^1\wedge ...\wedge dx^n) $ as the preferred volume, and still think that sections are    ``almost'' $(p,q)$-tensors:  in a local coordinates, they are also given by $n^{p+q}$ 
     functions.  Note though that  
  their transformation rule is slightly different from that for tensors: in addition  to the usual transformation rule for tensors one needs to multiply  the result by 
$\left(\det\left(\tfrac{dx}{dy}\right)\right)^\alpha  $ with $\alpha =\tfrac{k}{n+1}$.   In particular, the formula for Lie derivative is different from that for tensors.  Also the formula for the covariant derivative is different from that for tensor: one needs to take in account the covariant derivative of $(dx^1\wedge ...\wedge dx^n) $. We are going to discuss this right now.

The  bundle of $(p,q)$-weighted tensors of weight $\alpha$  is an associated  bundle to the tangent bundle, 
so a connection $\left(\Gamma^{i}_{jk}\right)$ induces a covariant derivation on it. The next proposition shows how the covariant derivation transforms if we replace the  connection $\left(\Gamma^{i}_{jk}\right)$ by a projectively equivalent connection.

\begin{proposition}  Suppose (projectively equivalent) connections $\nabla = (\Gamma_{jk}^i)$ and $ \bar \nabla = (\Gamma_{jk}^i)$  are related by the formula  \eqref{ast2}.  
  Then, the covariant derivatives  of a volume form $\Omega\in \Gamma \left(\Lambda_n M\right) $ in the connections $\nabla $ and $\bar \nabla$ are related by   
 
\begin{equation} \label{cov1}
 \bar \nabla_X \Omega= \nabla_X \Omega - (n+1) \phi(X) \Omega. 
\end{equation}

   In particular, the covariant derivatives of the    section    $ \omega :=  \Omega^{\tfrac{k}{n+1}} \in \Gamma(\left(\Lambda_n\right)^{\tfrac{k}{n+1}} M)$
 are related by 
\begin{equation} \label{cov2} 
 \bar \nabla_X \omega = \nabla_X \omega - k  \phi(X) \omega.   
\end{equation} 
\end{proposition}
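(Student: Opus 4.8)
The plan is to reduce both formulas to one short computation: the covariant derivative of a volume form involves the connection only through the trace $\Gamma^m_{mk}$ of its Christoffel symbols, and this trace changes by $(n+1)\phi_k$ under a projective change; then \eqref{cov2} follows by noting that passing to the $\tfrac{k}{n+1}$-th power of a line bundle scales its induced connection $1$-form by $\tfrac{k}{n+1}$.

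First I would compute in a local coordinate chart, writing $\Omega = f\,dx^1\wedge\cdots\wedge dx^n$ and the components of $\Omega$ as $f\,\varepsilon_{i_1\dots i_n}$ with $\varepsilon$ the totally antisymmetric symbol. From the tensorial formula $\nabla_k\Omega_{i_1\dots i_n}=\partial_k\Omega_{i_1\dots i_n}-\sum_{a}\Gamma^m_{ki_a}\Omega_{i_1\dots m\dots i_n}$ and the fact that $\varepsilon_{i_1\dots m\dots i_n}$ vanishes unless $m$ equals the index it replaces, the sum collapses to $\Gamma^m_{km}\,f\,\varepsilon_{i_1\dots i_n}$, so that $\nabla_X\Omega=\bigl(X(f)-\Gamma^m_{m\ell}X^\ell f\bigr)\,dx^1\wedge\cdots\wedge dx^n$; only the trace of the connection enters. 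Now I would plug \eqref{ast} into this trace: $\bar\Gamma^m_{mk}=\Gamma^m_{mk}+\phi_k\delta^m_m+\phi_m\delta^m_k=\Gamma^m_{mk}+(n+1)\phi_k$. Subtracting the two expressions for $\nabla_X\Omega$ and $\bar\nabla_X\Omega$, the $X(f)$-terms cancel and what survives is precisely $\bar\nabla_X\Omega=\nabla_X\Omega-(n+1)\phi(X)\,\Omega$, which is \eqref{cov1}.

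For \eqref{cov2} I would use that $(\Lambda_n)^{\tfrac{k}{n+1}}M$ is the line bundle associated to $\Lambda_n M$ via the group homomorphism $t\mapsto t^{\tfrac{k}{n+1}}$ of $(\mathbb{R}_{>0},\cdot)$, so its induced connection is obtained from that of $\Lambda_nM$ by multiplying the connection $1$-form by $\tfrac{k}{n+1}$. Concretely, since $\Lambda_nM$ is $1$-dimensional, $\nabla_X\Omega$ is again a section of $\Lambda_nM$, hence there is a local $1$-form $\psi$ with $\nabla_X\Omega=\psi(X)\,\Omega$, and then by the Leibniz rule $\nabla_X\omega=\nabla_X\bigl(\Omega^{\tfrac{k}{n+1}}\bigr)=\tfrac{k}{n+1}\psi(X)\,\omega$; likewise $\bar\nabla_X\omega=\tfrac{k}{n+1}\bar\psi(X)\,\omega$ with $\bar\nabla_X\Omega=\bar\psi(X)\,\Omega$. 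By \eqref{cov1} we have $\bar\psi(X)=\psi(X)-(n+1)\phi(X)$, and therefore $\bar\nabla_X\omega=\tfrac{k}{n+1}\psi(X)\,\omega-k\,\phi(X)\,\omega=\nabla_X\omega-k\,\phi(X)\,\omega$, which is \eqref{cov2}.

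The computation is routine; the only points that deserve a little care are the sign and ordering conventions in the covariant derivative of an $n$-form (making sure the trace $\Gamma^m_{mk}$ appears with the correct sign) and the clean statement of why the $\alpha$-th power of a one-dimensional bundle scales its connection form by $\alpha$ — i.e. that the covariant derivative of a section of a line bundle lands again in the same line bundle, legitimizing the notation $\nabla_X\Omega=\psi(X)\,\Omega$ and the Leibniz differentiation of $\Omega^\alpha$. Neither is a genuine obstacle.
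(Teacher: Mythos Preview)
Your proof is correct and follows exactly the route the paper indicates: the paper leaves the argument to the reader, stating only that \eqref{cov1} ``can be done by brute force calculations'' and that \eqref{cov2} ``follows from \eqref{cov1} and from the Leibniz rule'', which is precisely what you carry out --- the trace computation $\bar\Gamma^m_{mk}=\Gamma^m_{mk}+(n+1)\phi_k$ for the first part and the scaling of the connection $1$-form under $t\mapsto t^{k/(n+1)}$ (i.e., the Leibniz rule on the associated line bundle) for the second.
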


The proof of the proposition is straightforward and will be left to  the reader: the proof of \eqref{cov1} can be done by brute force calculations, and  \eqref{cov2} follows from \eqref{cov1} and from the Leibniz rule.

\subsection{A projectively-invariant differential operator on  1-forms of projective weight $(-2)$.} 

\begin{theorem}[\cite{Eastwood2008}]\label{thm:2}  For $(0,1)$-tensors of projective weight (-2) the differential operator  
\begin{equation}\label{K1} 
T \mapsto \operatorname{Symmetrization\_Of} (\nabla T)   \end{equation} 
 is projectively invariant: it does not depend on the choice of the affine connection in the projective class. \end{theorem}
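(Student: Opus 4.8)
The plan is to verify the invariance by a direct computation in local coordinates: write down the covariant derivative of a weighted $1$-form for the connection $\bar\nabla$, compare it with the one for $\nabla$ using Theorem~\ref{thm:1} and the Proposition above, and observe that after symmetrization the discrepancy vanishes exactly because the weight equals $-2$.

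First I would record the covariant derivative of a $(0,1)$-tensor $T=T_i$ of projective weight $k$. Using the coordinate trivialization of $\left(\Lambda_n\right)^{k/(n+1)}M$ described in Subsection~\ref{way}, the Leibniz rule, and the formulas \eqref{cov1}--\eqref{cov2}, one obtains
\begin{equation*}
\nabla_j T_i = \partial_j T_i - \Gamma^l_{ji}\,T_l - \tfrac{k}{n+1}\,\Gamma^l_{lj}\,T_i,
\end{equation*}
where it is irrelevant whether one writes $\Gamma^l_{ji}$ or $\Gamma^l_{ij}$ since all our connections are torsion-free. Then I would substitute \eqref{ast}, i.e. $\bar\Gamma^i_{jk}=\Gamma^i_{jk}+\phi_k\delta^i_j+\phi_j\delta^i_k$. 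Only two elementary contractions are needed, namely $\bar\Gamma^l_{ji}T_l=\Gamma^l_{ji}T_l+\phi_i T_j+\phi_j T_i$ and $\bar\Gamma^l_{lj}=\Gamma^l_{lj}+(n+1)\phi_j$; note that the factor $n+1$ occurring here is precisely what will single out the weight. These give
\begin{equation*}
\bar\nabla_j T_i = \nabla_j T_i - \phi_i\,T_j - (k+1)\,\phi_j\,T_i.
\end{equation*}

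Finally, symmetrizing over the indices $i$ and $j$, the two correction terms coincide and add up to $-(k+2)\,\phi_{(i}T_{j)}$. For $k=-2$ this is zero, so $\bar\nabla_{(i}T_{j)}=\nabla_{(i}T_{j)}$, which is the assertion; the computation also shows that $-2$ is the only weight for which this operator is projectively invariant. I do not expect a genuine obstacle here: the one place demanding care is fixing conventions consistently --- the index placement and sign in the weighted covariant derivative, and the normalization $\alpha=\tfrac{k}{n+1}$ of the density factor in $T^{(p,q)}M(k)$ --- because once these are pinned down the claim is literally a two-line contraction. An invariant reformulation is possible using \eqref{ast2} and \eqref{cov2} directly (writing out $(\bar\nabla_X T)(Y)$ and symmetrizing in $X,Y$), but it is no shorter than the index calculation above.
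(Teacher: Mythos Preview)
Your proof is correct and follows essentially the same route as the paper: a direct computation of $\bar\nabla T-\nabla T$ using \eqref{ast} and \eqref{cov2}, followed by the observation that the discrepancy is skew in the two indices. The paper carries this out in index-free notation for $k=-2$ from the start (obtaining $\bar\nabla_X K=\nabla_X K+\phi(X)K-K(X)\phi$), whereas you work in indices and keep the weight $k$ general until the end; this yields the small bonus that $k=-2$ is singled out as the unique weight, but otherwise the arguments are the same.
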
 
 
 \vspace{1ex} {\bf Proof.}  
Let $K\in \Gamma\left(T^{(0,1)} M (-2)\right)$ be an 1-form of projective weight $(-2)$.
 We calculate the difference of  their   $\nabla$- and $\bar \nabla$- derivatives  assuming  \eqref{ast2}:
\begin{equation} \label{3} 
\bar \nabla_X K =  \nabla_X K \underbrace{- \phi(X) K - K(X) \phi}_{\textrm{\tiny because of \eqref{ast2}}} + \underbrace{ 2 \phi(X) K}_{\textrm{\tiny because of \eqref{cov2}}} = \nabla_X K  +  \phi(X) K - K(X) \phi.  
\end{equation}

We see that  $(\bar \nabla_X K) (Y)-(  \nabla_X K) (Y)$  is scewsymmetric in $X,Y$; then    it vanishes after symmetrization.  \qed 
 
\begin{remark}  In the index notation, the mapping \eqref{K1}  reads $K_i\mapsto K_{i, j} + K_{j, i}$, where ``comma'' denotes the covariant differentiation, as described above.   The equation $ K_{i, j} + K_{j, i}=0$ is called    the \emph{projective Killing equation} for weighted 1-forms.  In coordinates, the formula for the covariant derivative depends on what way, (A) or (B) from \S \ref{way}, we have chosen for representing the form in local coordinates. If we have chosen the way (A) and if in addition the volume form $\Omega_0=f_0 dx^1\wedge ... \wedge dx^n$ is parallel with respect to the connection $\Gamma$ we will use for covariant differentiation, then the formula for  the covariant derivative is precisely the same as the one for usual  tensor fields:
$$
K_{i, j}= \frac{\partial K_i}{\partial x^j} - K_s  \Gamma^s_{ij}.
$$
Now, if we have chosen the way (B), then we obtain the formula 
$$
K_{i,j} =  \frac{\partial K_i}{\partial x^j} - K_s  \Gamma^s_{ij} +\tfrac{2}{n+1}{K_i} \Gamma^s_{js}.
$$

 \end{remark}

\begin{corollary}  \label{cor:2}   For  symmetric  $(0,2)$-tensors of projective weight $(-4)$ the operation 
$$
K \mapsto \operatorname{Symmetrization\_Of} (\nabla K)
$$
 is projectively invariant: it does not depend on the choice of the affine connection in the projective class. 
 \end{corollary}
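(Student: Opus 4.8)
The plan is to run the argument of Theorem~\ref{thm:2} essentially verbatim, now with one extra lower index and with the projective weight $(-4)$ in place of $(-2)$. Let $K\in\Gamma\!\left(T^{(0,2)}M(-4)\right)$ be a symmetric tensor and let $\nabla=(\Gamma^i_{jk})$ and $\bar\nabla$ be related by \eqref{ast2}. First I would compute the difference $\bar\nabla K-\nabla K$ exactly as in \eqref{3}: each of the two lower slots of $K$ contributes, via \eqref{ast2}, the terms $-\phi_iK_{kj}-\phi_kK_{ij}$ respectively $-\phi_jK_{ik}-\phi_kK_{ij}$, while the projective weight $(-4)$ contributes, via \eqref{cov2} with $k=-4$, the term $+4\,\phi_kK_{ij}$. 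Summing these up gives
\[
\bar\nabla_k K_{ij}=\nabla_k K_{ij}+2\,\phi_k K_{ij}-\phi_i K_{kj}-\phi_j K_{ik}.
\]

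The second step is to symmetrize this identity over $i,j,k$ and check that the $\phi$-part on the right disappears. Writing it as
\[
2\,\phi_k K_{ij}-\phi_i K_{kj}-\phi_j K_{ik}=\bigl(\phi_k K_{ij}-\phi_i K_{kj}\bigr)+\bigl(\phi_k K_{ij}-\phi_j K_{ik}\bigr),
\]
one sees, using $K_{ij}=K_{ji}$, that the first bracket is skew-symmetric in the pair $(i,k)$ and the second in the pair $(j,k)$; a $(0,3)$-tensor that is skew in some pair of its indices has vanishing total symmetrization, hence $\operatorname{Symmetrization\_Of}(\bar\nabla K)=\operatorname{Symmetrization\_Of}(\nabla K)$, which is the assertion. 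This is literally the rank-two analogue of the observation ``$(\bar\nabla_XK)(Y)-(\nabla_XK)(Y)$ is skewsymmetric in $X,Y$'' from the proof of Theorem~\ref{thm:2}.

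There is no genuine obstacle here; the only things that need a little attention are the weight bookkeeping---that weight $(-4)$ feeds the coefficient $+4$, not $+2$, into \eqref{cov2}---and keeping track of which of the resulting $\phi\otimes K$ terms cancel under symmetrization. The very same computation goes through unchanged for symmetric $(0,p)$-tensors of projective weight $-2p$, which is why the rank-one case (Theorem~\ref{thm:2}) and the rank-two case (this corollary) are stated separately but share an identical proof; following the style of the excerpt I would record this and leave the routine verification to the reader.
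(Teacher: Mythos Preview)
Your computation is correct. However, the paper's own proof takes a different, one-line route: it observes that any symmetric $(0,2)$-tensor of projective weight $(-4)$ can be decomposed as a sum of symmetric tensor products of $(0,1)$-tensors of projective weight $(-2)$, and then simply applies Theorem~\ref{thm:2} to each factor. This polarization trick avoids redoing the index calculation and makes the corollary an immediate consequence of the rank-one case. Your direct approach has its own merits: it is self-contained, makes the weight bookkeeping explicit (so one sees concretely why $(-4)$ is the right weight), and, as you note, generalizes verbatim to symmetric $(0,p)$-tensors of projective weight $(-2p)$ without invoking any decomposition.
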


 {\bf    Proof.} Decompose symmetric  $(0,2)$ tensors of weight $(-4)$ into 
   the  sum of symmetric tensor 
   products  of $(0,1)$ tensors of weight $(-2)$ and apply Theorem \ref{thm:2}.  \qed
 
  The equation $\operatorname{Symmetrization\_Of} (\nabla K)=0$ (on symmetric  $(0,2)$-tensors of projective weight $(-4)$)   is called   the \emph{projective   Killing equation}; it  will play an important role  later. In the index form it reads 
\begin{equation} \label{K2} 
  K_{ij,k} +   K_{jk,i} +   K_{ki,j}=0.
 \end{equation}

Let us introduce two more projectively invariant operators: 

\begin{theorem} \label{thm:3}   For $(1,0)$-tensors of projective weight 1 the operation 
$$
v \mapsto Trace\_Free\_Part\_Of \nabla(v)  = v^i_{ \  , j} - \tfrac{1}{n} v^s_{\ ,s} \delta^i_j.   
$$
 is projectively invariant.  For symmetric $(2,0)$-tensors   $\sigma$  of projective weight $2$ the operation 
\begin{equation} \label{E4}
\sigma^{ij} \mapsto \sigma^{ij}_{ \ \ , k} - \tfrac{1}{n+1}   ( \sigma^{is }_{ \ \ , s} \delta^j_k +  \sigma^{js }_{ \ \ , s} \delta^i_k)  
\end{equation} 
 is projectively invariant.  \end{theorem}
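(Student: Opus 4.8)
The plan is to prove both invariance statements by the same direct computation already used for Theorems~\ref{thm:1} and~\ref{thm:2}: I would work out how $\nabla$ of a weighted $(p,0)$-tensor changes when $\nabla$ is replaced by a projectively equivalent $\bar\nabla$, by applying \eqref{ast2} once for every upper (tensor) index and \eqref{cov2} once for the density factor (Leibniz rule), and then check that the resulting correction terms are precisely the ones annihilated by the indicated algebraic projection. It will be visible from the computation that the weights $1$ and $2$ are exactly the values for which this happens.

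First I would handle the operator on $(1,0)$-tensors. Let $v$ be a vector field of projective weight $1$. Plugging $X=\partial_j$, $Y=v$ into \eqref{ast2} produces the correction $\phi_s v^s\,\delta^i_j+\phi_j v^i$ to $\nabla_j v^i$, while \eqref{cov2} with $k=1$ contributes $-\phi_j v^i$; the two terms $\phi_j v^i$ cancel, so
\[
\bar\nabla_j v^i=\nabla_j v^i+\phi_s v^s\,\delta^i_j .
\]
(The choice of weight $1$ is essential here: a weight $k\neq1$ would leave a term $(1-k)\phi_j v^i$, which is not of the form $(\,\cdot\,)\,\delta^i_j$ and hence would survive the trace-free projection.) Taking the trace gives $\bar\nabla_s v^s=\nabla_s v^s+n\,\phi_s v^s$, so $\tfrac1n\bar\nabla_s v^s\,\delta^i_j=\tfrac1n\nabla_s v^s\,\delta^i_j+\phi_s v^s\,\delta^i_j$, and subtracting removes the remaining $\phi_s v^s\,\delta^i_j$. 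Thus $v^i_{\ ,j}-\tfrac1n v^s_{\ ,s}\delta^i_j$ does not depend on the choice of connection in the projective class.

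For the operator on symmetric $(2,0)$-tensors I would repeat the computation with one application of \eqref{ast2} per upper index and \eqref{cov2} with $k=2$. The two terms $\phi_k\sigma^{ij}$ coming from the two tensor indices are cancelled by the density correction $-2\phi_k\sigma^{ij}$ (again this forces the weight to be $2$), leaving
\[
\bar\nabla_k\sigma^{ij}=\nabla_k\sigma^{ij}+\phi_s\sigma^{sj}\,\delta^i_k+\phi_s\sigma^{is}\,\delta^j_k .
\]
Contracting $k$ with $j$ and using $\sigma^{ij}=\sigma^{ji}$ gives $\bar\nabla_s\sigma^{is}=\nabla_s\sigma^{is}+(n+1)\phi_s\sigma^{is}$, which shows that the coefficient $\tfrac1{n+1}$ in \eqref{E4} is exactly the one needed so that $\tfrac1{n+1}(\bar\sigma^{is}_{\ \ ,s}\delta^j_k+\bar\sigma^{js}_{\ \ ,s}\delta^i_k)$ absorbs the two correction terms above (one uses $\sigma^{sj}=\sigma^{js}$ once more to match $\phi_s\sigma^{sj}\delta^i_k$ with $\phi_s\sigma^{js}\delta^i_k$); subtracting yields \eqref{E4} with $\nabla$ replaced by $\bar\nabla$, i.e.\ invariance. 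Alternatively — and this is perhaps the cleaner route, parallel to the proof of Corollary~\ref{cor:2} — one writes a symmetric weight $2$ bivector locally as a sum of symmetric squares $v\odot v$ of weight $1$ vector fields, notes that $\eqref{E4}$ applied to $v\odot v$ is a universal expression in $v$ and in $Dv:=v^i_{\ ,j}-\tfrac1n v^s_{\ ,s}\delta^i_j$ (the trace terms $v^s_{\ ,s}$ drop out), and invokes the already-proved invariance of $D$.

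The computation is routine once the transformation rules \eqref{ast2} and \eqref{cov2} are in hand; the only place demanding a little care is keeping track of all the $\phi$-terms in the $(2,0)$ case and invoking the symmetry $\sigma^{ij}=\sigma^{ji}$ at the right moments (or, in the reduction approach, verifying that the trace terms really cancel), so I expect the bookkeeping there to be the only, and rather mild, obstacle.
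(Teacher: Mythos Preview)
Your proposal is correct. The paper itself leaves the proof to the reader, indicating only that the first statement should be handled as in Theorem~\ref{thm:2} (direct computation of how the covariant derivative changes under \eqref{ast2} and \eqref{cov2}) and the second as in Corollary~\ref{cor:2} (decomposition into symmetric products of weight-$1$ vectors). Your direct computations match this exactly for the first statement and supply more than the paper asks for the second: you give the direct calculation (which the paper does not suggest but which is perfectly valid and is in fact reproduced later in the paper in the proof of Theorem~\ref{thm:4}) and then also sketch the decomposition route that the paper has in mind. Both are correct; the decomposition argument is the one the paper points to, and your check that the $v^{s}_{\ ,s}$ terms cancel in \eqref{E4} applied to $v\odot v$ is the small verification needed to make that route go through.
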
 

\vspace{1ex} 

The proof will be left to a reader: the proof of the first statement is similar to that of Theorem \ref{thm:2}, and the proof of the second statement is similar to that of Corollary \ref{cor:2}. 

\vspace{1ex}

   \begin{remark} In the index-free notation the operation \eqref{E4}  reads 
 $$
 \sigma \mapsto \operatorname{Trace\_Free\_Part\_Of} \left(\nabla \sigma\right).   
 $$
 Though $\nabla \sigma$ is a (2,1)-weighted-tensor,  its  trace is well-defined and  is a (1,0)-tensor of projective weight 2 given by the formula $\operatorname{trace}(\sigma^{ij}_{ \ \  ,k})= \sigma^{sj}_{ \  \ , s} .$
 \end{remark}

\subsection{ Geometric importance of the operator $
 \sigma \mapsto \operatorname{Trace\_Free\_Part\_Of} \left(\nabla \sigma\right)$}

  \begin{theorem} \label{thm:4} Suppose the Levi-Civita connection  of a metric $g$ lies in a projective class $\left[ \nabla \right]$. Then,  $\sigma^{ij} := g^{ij} \otimes \left(  \operatorname{Vol}_g\right)^{\tfrac{2}{n+1}}$ is a solution of  
\begin{equation} 
 \operatorname{Trace\_Free\_Part\_Of} \left(\nabla \sigma\right)=0. \label{5}     
 \end{equation}
  Moreover, for every  solution of the equation \eqref{5}  such that $\det(\sigma) \ne 0$ there exists a metric $g$  whose Levi-Civita connection lies in the projective class and such that $\sigma^{ij} := g^{ij} \otimes \left(  \operatorname{Vol}_g\right)^{\tfrac{2}{n+1}}$. 
\end{theorem}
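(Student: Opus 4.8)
The statement has two directions: (a) given a metric $g$ whose Levi-Civita connection lies in $[\nabla]$, show that $\sigma^{ij}=g^{ij}\otimes(\operatorname{Vol}_g)^{2/(n+1)}$ solves \eqref{5}; and (b) conversely, given a nondegenerate solution $\sigma$ of \eqref{5}, reconstruct a metric $g$ with Levi-Civita connection in $[\nabla]$ and with $\sigma$ arising from $g$ by this formula. For direction (a), the key observation is that the operator in \eqref{5} is projectively invariant (Theorem \ref{thm:3}), so I am free to compute using the Levi-Civita connection $\nabla^g$ of $g$ itself, which lies in the class. With respect to $\nabla^g$ one has $\nabla^g g^{ij}=0$ and $\nabla^g\operatorname{Vol}_g=0$, hence $\nabla^g\sigma=0$, so in particular its trace-free part vanishes. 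That settles (a) in one line once projective invariance is invoked.

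For direction (b), the plan is to run this computation backwards. Start with a nondegenerate symmetric $(2,0)$-tensor of weight $2$, $\sigma^{ij}$, solving \eqref{5}. Pick any connection $\nabla=(\Gamma^i_{jk})$ in the class and trivialize the weight bundle locally by a volume form, so $\sigma$ becomes an honest nondegenerate symmetric contravariant $2$-tensor times a weight factor; define $g^{ij}$ from $\sigma$ by stripping off the $(\operatorname{Vol}_g)^{2/(n+1)}$, i.e. set $\operatorname{Vol}_g$ to be the volume form of the metric dual to $g^{ij}$ and solve the resulting algebraic relation between $g^{ij}$ and $\det(g^{ij})$ — concretely, $g^{ij}$ is proportional to $\sigma^{ij}$ with the proportionality factor pinned down by requiring consistency of the weight, which is a pointwise algebraic normalization that has a solution precisely because $\det\sigma\neq 0$. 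Let $g_{ij}$ be the inverse matrix; this defines a (pseudo-)Riemannian metric. It remains to show that $\nabla^g$, the Levi-Civita connection of this $g$, is projectively equivalent to $\nabla$, i.e. differs from it by a term $\phi_k\delta^i_j+\phi_j\delta^i_k$ as in \eqref{ast}.

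To produce the $1$-form $\phi$, the equation \eqref{5} says that $\nabla\sigma$, which a priori is a weighted $(2,1)$-tensor, is pure trace: $\sigma^{ij}_{\ \ ,k}=\tfrac{1}{n+1}(\mu^i\delta^j_k+\mu^j\delta^i_k)$ where $\mu^i=\sigma^{is}_{\ \ ,s}$. The guess is that $\phi$ should be built from $\mu^i$ by lowering an index with $g$ and rescaling; one then checks, using the transformation rule \eqref{ast2}–\eqref{cov2} for how $\nabla\sigma$ changes under a projective change of connection, that replacing $\nabla$ by $\nabla+\phi_k\delta^i_j+\phi_j\delta^i_k$ with this $\phi$ makes $\sigma$ genuinely parallel, $(\nabla+\phi)\sigma=0$. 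A connection that parallelizes a nondegenerate symmetric $2$-tensor (and the associated volume form, which follows automatically since $\operatorname{Vol}_g$ is an algebraic function of $\sigma$) must be the Levi-Civita connection of the corresponding metric; this identifies $\nabla+\phi$ with $\nabla^g$ and hence places $\nabla^g$ in the class $[\nabla]$. Finally one verifies that the $g$ so obtained reproduces $\sigma$ via $\sigma^{ij}=g^{ij}\otimes(\operatorname{Vol}_g)^{2/(n+1)}$, which holds by the construction of $g$ from $\sigma$.

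I expect the main obstacle to be the middle step: correctly solving the pointwise algebraic normalization that turns the weighted tensor $\sigma$ into an unweighted metric $g$ together with its own volume form in a self-consistent way (the weight $\tfrac{2}{n+1}$ and the exponent in $\det(g^{ij})$ have to match, and one must track how $\det$ scales), and then showing that the specific $1$-form $\phi=\phi(\mu,g)$ is exactly the one that cancels the trace term in \eqref{5}. Both are essentially bookkeeping with determinants and the transformation formulas \eqref{ast2} and \eqref{cov2}, but getting the constants right (the $\tfrac{1}{n+1}$, the $-k$ in \eqref{cov2} with $k=2$, and the factor coming from $\nabla\det g$) is where an error is most likely to creep in. The nondegeneracy hypothesis $\det\sigma\neq0$ is used precisely to invert $\sigma$ and to make the normalization solvable.
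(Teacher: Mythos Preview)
Your proposal is correct and follows essentially the same approach as the paper: for the forward direction you invoke projective invariance and compute with $\nabla^g$, and for the converse you use the transformation rule for $\nabla\sigma$ under a projective change to find a $1$-form $\phi$ (determined by $(n+1)\sigma^{is}\phi_s=-\sigma^{is}_{\ \ ,s}$, which is exactly your ``lower $\mu^i$ with $\sigma^{-1}$ and rescale'') such that the modified connection annihilates $\sigma$, hence is Levi-Civita. The only cosmetic difference is ordering: the paper first finds $\phi$ and then reads off the metric via the explicit formula $g^{ij}=|\det\sigma|\,\sigma^{ij}$, whereas you propose to solve the algebraic normalization for $g$ first; either way works, and your identification of the constant-chasing as the only delicate point is accurate.
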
 

Theorem \ref{thm:4} is due to \cite{EastwoodMatveev2007}. Its two-dimensional version was essentially known to \cite{Liouville1889}.

\vspace{1ex} 
  \noindent {\bf Proof in  the direction $\Rightarrow$. }   We assume that $\nabla^g\in \left[ \nabla \right]$. Since our equation is projectively invariant, we may choose  any connection in the projective class; w.l.o.g. we choose   the Levi-Civita connection $\nabla^g$. In this connection the metric and therefore 
all objects constructed by the metric are parallel so $\nabla^g(\sigma)= 0$ which of course implies \eqref{5}.   \qed

\vspace{1ex} 
  \noindent {\bf Proof in  the direction $\Leftarrow$. } 
Let us  observe that though the operator    $$
\sigma^{ij} \mapsto \sigma^{ij}_{ \ \ , k} - \tfrac{1}{n+1}   \left( { \sigma^{is }_{ \ \ , s} \delta^j_k +  \sigma^{js }_{ \ \ , s} \delta^i_k}\right)$$
does not depend on the choice of a connection in the projective class, the   terms  in the right hand side  do depend. Indeed, by direct calculations we see that 
$$
\bar \nabla_k \sigma^{ij} -   \nabla_k \sigma^{ij} = \sigma^{is} \phi_s \delta^j_k+ \sigma^{js} \phi_s \delta^i_k \ .
$$
If $\sigma^{ij}_{ \ \ , k} - \tfrac{1}{n+1}   \left( { \sigma^{is }_{ \ \ , s} \delta^j_k +  \sigma^{js }_{ \ \ , s} \delta^i_k}\right) =0$, 
this implies   
$$
\bar \nabla_k \sigma^{ij} =  \tfrac{1}{n+1}   \left( { \sigma^{is }_{ \ \ , s} \delta^j_k +  \sigma^{js }_{ \ \ , s} \delta^i_k}\right)  +  \sigma^{is} \phi_s \delta^j_k+ \sigma^{js} \phi_s \delta^i_k.  
$$

 Thus, if we as  the 1-form $\phi$  take the one satisfying the condition 
 $(n+1) \sigma^{is} \phi_s = -\sigma^{is }_{ \ \ , s} $, which is always possible if  $\sigma^{is }$ is nondegenerate, we obtain that $\sigma$ is paralell with respect to $\bar \Gamma$ implying that $\bar \Gamma$ is the Levi-Civita connection of the corresponding metric. \qed

  Let us now explain the relation between (nondegenerate) solutions $\sigma$ 
 of  the metrisability equation  \eqref{5} and metrics in coordinates.  
 
 Let us work in a coordinate system and choose $dx^1\wedge ... \wedge dx^n$
  as a  volume form, i.e., we have chosen the way (B) from \S \ref{way} to do local calculations.

 \begin{itemize} 
 
\item  If we have a metric $g_{ij}$, then the corresponding solution of the metrisability equation is given by  the matrix 
\begin{equation} \label{eq:sigma1}
 \sigma^{ij} :=  \left(g^{ij} \otimes \left(  \operatorname{Vol}_g\right)^{\tfrac{2}{n+1}}\right)= g^{ij} |\det g|^{\tfrac{1}{n+1}}.
\end{equation}

\item For  a solution  $\sigma= \sigma^{ij}$ of the metrisability  equation such that its determinant in not zero, the corresponding metric is given by 
\begin{equation} \label{eq:sigma}
 g^{ij} := |\det(\sigma)| \sigma^{ij}. 
\end{equation}

\end{itemize}

\begin{remark} \label{formula:sinjukov} 
In there exists a metric in the projective class, one can use its Levi-Civita connection for covariant differentiation and its volume form for identifications of weighted tensors with tensors. After doing this and using that the volume form is parallel, the formula \eqref{5} reads 
\begin{equation} \label{eqn:sinjukov}
a^{ij}_{ \ \ , k}= \lambda^i\delta_k^j+ \lambda^j\delta_k^i,  
\end{equation} 
where $a^{ij}$ now is a (symmetric) (2,0)-tensor related to $\sigma^{ij}$ from  \eqref{5} by $a = \sigma \otimes \left(  \operatorname{Vol}_g\right)^{\frac{2}{n+1}}$. 
This formula was known before, see e.g. \cite{Sinjukov1979} or \cite{BolsinovMatveev2003}. 

Note that contracting \eqref{eqn:sinjukov} with $g_{ij}$ we see that the vector field 
$\lambda^i$ is actually the  half of the $g$-gradient of the $g$-trace of $a$,
$$
\lambda^i= \left(\tfrac{1}{2} g^{is} \left(a^{pq }g_{pq}\right)_{, s}\right).   
$$
  In particular, if all eigenvalues of $A^i_j:= a^{pi }g_{pj}$ are constant, $\lambda^i$ is zero and therefore
  $\sigma$  is parallel. In particular,  if $\sigma$ came from a projectively equivalent metric, then this metric is actually affinely equivalent to $g$. 
\end{remark} 

As an example let us consider  the case  of dimension   two.  
As we explained in \S \ref{projectiveconnection}, in dimension 2 the  four functions   $K_0, K_1, K_2, K_3$ (coefficients  of the  ODE \eqref{eq.proj.conn.associated})   determine the projective class.

In this setting, the metrisability  equations in the following system of 4 PDE on three unknown functions:

\begin{equation} \label{metrization} \left\{\begin{array}{rcc}
{\sigma^{22}}_x-\tfrac{2}{3}\,K_1\,\sigma^{22} -2\,K_0\,\sigma^{12}&=&0\\
{\sigma^{22}}_y-2\,{\sigma^{12}}_x
-\tfrac{4}{3}\,K_2\,\sigma^{22}-\tfrac{2}{3}\,K_1\,\sigma^{12}+2\,K_0\,\sigma^{11}&=&0\\
-2\,{\sigma^{12}}_y+{\sigma^{11}}_x
-2\,K_3\,\sigma^{22}+\tfrac{2}{3}\,K_2\,\sigma^{12}+\tfrac{4}{3}\,K_1\,\sigma^{11}&=&0\\
{\sigma^{11}}_y+2\,K_3\,\sigma^{12}+\tfrac{2}{3}\,K_2\,\sigma^{11} &=&0
\end{array}\right.
\end{equation}

 In higher dimensions, the metrisability  equations in also an overdetermined system of    PDE. In dimension $n$, it has $\tfrac{n(n+1)}{2}$ unknowns and $\tfrac{n^2(n+1)}{2}-n$ equations; the coefficients are constructed by certain explicit formulas by coefficients of a  connection and do not depend on the choice of connection within the projective class. 

\begin{corollary} \label{duna} Generic (in the $C^\infty$-topology)  projective structure is not metrizable (assuming $n=\dim M\ge 2$). 
\end{corollary}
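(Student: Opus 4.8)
The plan is to prove the stronger assertion that for a generic projective structure the metrisability equation \eqref{5} admits \emph{no nonzero solution} $\sigma$ at all; since a metric in the class produces a nondegenerate, hence nonzero, solution by the ``$\Rightarrow$'' part of Theorem~\ref{thm:4}, this gives the corollary. A projective structure that is non-metrizable on one open set is non-metrizable on all of $M$, and the coefficients of \eqref{5} depend on a connection in the class by universal polynomial formulas; so it is enough to work in one coordinate chart $U$ and transport the genericity to $M$ at the end.

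First I would extract the algebraic integrability conditions of \eqref{5}. Fix any connection $\nabla$ in the class and use it for the weighted covariant differentiation; then \eqref{5} reads $\sigma^{ij}_{\ \ ,k}=\mu^i\delta^j_k+\mu^j\delta^i_k$ with $\mu^i:=\tfrac{1}{n+1}\sigma^{is}_{\ \ ,s}$. Differentiating once more, antisymmetrising the two lower derivative indices and using the Ricci identity, the left side becomes the curvature of $\nabla$ contracted with $\sigma$ while the right side is linear in $\nabla\mu$; the part that is trace-free in the relevant index groups removes $\nabla\mu$ and leaves a purely algebraic condition on $\sigma$ of schematic form $W^i_{\ skl}\sigma^{sj}+W^j_{\ skl}\sigma^{si}=0$, where $W$ is the projective Weyl curvature of the class (identically zero when $n=2$). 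Differentiating further produces more algebraic conditions, with coefficients involving covariant derivatives of $W$, constraining $\sigma$ and $\mu$. It is known that \eqref{5} is of finite type: it prolongs to a linear connection $D$ on a bundle of rank $\binom{n+2}{2}$, with fibre coordinates $(\sigma^{ij},\mu^i,\rho)$, whose parallel sections are exactly the solutions of \eqref{5}; I would invoke this rather than carry out the prolongation. A parallel section is then determined by its value at $p\in U$, which must moreover lie in the common kernel $W_p$ of all curvature operators $R^D(X,Y)|_p$ of $D$. So everything reduces to showing $W_p=\{0\}$ for generic projective structures.

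This is the decisive point. The map ``(finite jet at $p$ of a representative connection) $\mapsto R^D|_p$'' is given by universal polynomial formulas, so $\{W_p\neq\{0\}\}$ cuts out a closed algebraic subset $\Sigma$ of the finite-dimensional jet space, and it suffices to exhibit one projective structure whose jet at a point avoids $\Sigma$, for then $\Sigma$ is proper and its complement is open and dense. For $n\ge 3$ one can already succeed with the first integrability condition: the linear map $\sigma\mapsto W^i_{\ skl}\sigma^{sj}+W^j_{\ skl}\sigma^{si}$ has domain of dimension $\binom{n+1}{2}$, far below the dimension of its codomain, hence is injective for a generic algebraic Weyl tensor $W$ — which requires checking only one $W$ (the nonvanishing of a single minor), and, as the $1$-jet of a representative connection at $p$ varies, $W|_p$ ranges over all algebraic Weyl tensors; the higher conditions, with coefficients among $\nabla W$ and $\nabla^2W$, then also pin down $\mu|_p$ and $\rho|_p$, forcing $W_p=\{0\}$ for a generic choice, which one again verifies on a single example. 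For $n=2$, $W\equiv 0$ and the obstructions begin one order later, with coefficients among the first and second derivatives of $K_0,\dots,K_3$ of \eqref{metrization}; here one exhibits one $2$-jet of $(K_0,\dots,K_3)$ at a point for which the resulting linear algebraic system forces $\sigma$, $\mu$, $\rho$ to vanish. Producing such an explicit example in each dimension — equivalently, checking that the relevant obstruction is not the zero polynomial on jet space — is the only genuine obstacle; the two-dimensional analysis goes back to \cite{Liouville1889} and the prolongation to \cite{EastwoodMatveev2007}, so this step is routine.

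Finally I would package the genericity. Non-metrizability of the class on $U$ is an open condition in the $C^\infty$-topology: by the above it is detected by the nonvanishing at $p$ of finitely many polynomials in a finite jet of a defining connection at $p$, and this jet depends continuously on the connection. It is dense: any connection can be altered inside $U$, unchanged near $\partial U$, so as to realise a prescribed generic jet at $p$. Since a class non-metrizable on $U$ is non-metrizable on $M$, the non-metrizable projective structures contain an open dense — in particular residual — subset, which is the claim.
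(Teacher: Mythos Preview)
Your proposal is correct and follows essentially the same approach as the paper: both argue that the metrisability equation is an overdetermined linear system of finite type whose integrability conditions are algebraic-differential relations on the projective structure that do not vanish identically, so they fail on an open dense set. The paper gives only a brief ``Explanation'' and defers the nonvanishing to \cite{BryantDunajskiEastwood2009} (dimension~$2$) and \cite{DunajskiEastwood2014} (dimension~$3$, with a remark that higher dimensions are a straightforward generalisation), whereas you flesh out the same skeleton via the prolongation connection of \cite{EastwoodMatveev2007} and a jet-space argument; the underlying strategy is identical.
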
 

{\bf Explanation.} It is known  that the existence of an  nontrivial solution of an overdetermined system implies that the coefficients of this system satisfy certain algebraic-differential relations (known as ``integrability conditions''). In our case, one can show that the integrability conditions do not vanish identically  and therefore are not zero for  a generic metric, which implies  Corollary \ref{duna}. The   proof  that the integrability conditions do not vanish identically requires some work; for dimension 2 it was done in   \cite{BryantDunajskiEastwood2009}.  
In dimension 3, instead of \cite{BryantDunajskiEastwood2009} on can use   \cite{DunajskiEastwood2014}. In other dimensions   one needs to slightly and straightforwardly generalize certain results of  \cite{DunajskiEastwood2014}.

  \section{ Metric Projective Geometry}  
  
  \subsection{Philosophy and goals.  }

One can of course study  projective structures without thinking about whether there is a (Levi-Civita connection of a)  metric in the projective class.  
Unfortunately, in this case there are only few  ``easy to formulate, hard to prove'' results, and we are not aware of any  applications to  
 or interplay  with  other branches of mathematics and other sciences. Of course, there are plenty of applications of say representation theory to the theory of projective structures (see e.g. \cite{CapSlovak2009}), but not in the other directions.  
 
We suggest to  study metrizable projective structures, i.e.,  such that there exists a metric in the projective class.    The condition that the projective structure is metrizable is a strong condition: as we mentioned above,  generic projective structure is not metrizable. Moreover, 
generic   metrizable projective structure has only one, up to a scaling, metric in the projective class (see e.g. \cite{Matveev2012a}). In this case, all geometric questions can be reformulated as questions to this metric (say, projective vector fields for such projective structure are automatically   homothety vector fields for this metric).

So, in what follows  we will concentrate on metrizable projective structures such that  \emph{there exists at least two nonproportional metrics in the projective class}.  
It is of course a natural object of study; there are a lot of results in this topic going back to \cite{Lagrange1789}, \cite{Beltrami1865}, \cite{Dini1869}, \cite{Levi-Civita1896} and so on; we recall  and reprove some of them.  We will see that in this topic there are 
many  ``easy to formulate, hard to prove'' results,   many named  and natural problems, and there is a deep interplay with other branches of mathematics (in our paper we will use a relation to the theory of integrable systems); see also    e.g. \cite{Matveev2012a} for explaining how this topic  appeared within general relativity. In the next section we will defend this viewpoint by  proving such  ``easy to formulate, hard to prove'' result; the proof will actually be relatively easy using the projective invariant equations   we explained before.

\subsection{Topology of 2-dimensional manifolds admitting projectively equivalent metrics.  }
Our goal will be to prove the following theorem which was first proved in \cite{MatveevTopalov1998}.  The present proof is a new one. 

\begin{theorem} \label{thm:5}  Let $(M^2, g)$ be a two-dimensional closed (compact, no boundary) Riemannian  manifold.  Assume a metric $\bar g$ is projectively equivalent to $g$ and is nonproportional to $g$. Then, $M^2$ has nonnegative Euler characteristic. 
\end{theorem}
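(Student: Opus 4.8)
\noindent The plan is to encode $\bar g$ by a $g$-self-adjoint operator satisfying Sinjukov's equation and to read off $\chi(M)$ from the topology of its eigen-direction line fields via Poincar\'e--Hopf. First I would apply Theorem~\ref{thm:4}: the metric $\bar g$ produces a solution $\bar\sigma$ of the metrisability equation \eqref{5}, and \eqref{eq:sigma} shows that $\bar\sigma$ is proportional to the solution $\sigma$ coming from $g$ if and only if $\bar g$ is proportional to $g$; so in our situation $\bar\sigma$ and $\sigma$ are not proportional. Using the Levi--Civita connection and the volume form of $g$ for all identifications, as in Remark~\ref{formula:sinjukov}, I obtain a $g$-self-adjoint $(1,1)$-tensor $A^i_j=a^{ik}g_{kj}$ solving \eqref{eqn:sinjukov}, and $A$ is not a constant multiple of the identity (this is precisely the non-proportionality of $\bar g$ and $g$). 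A direct computation from \eqref{eqn:sinjukov} in dimension two shows that $A=f\cdot\mathrm{Id}$ on all of $M$ forces $f\equiv\const$, hence by non-proportionality $A$ is not everywhere a multiple of the identity; combined with the rigidity of solutions of the overdetermined system \eqref{5} (a solution is determined by its value and finitely many derivatives at one point), this shows that the \emph{umbilic set} $M_0:=\{p:\lambda_1(p)=\lambda_2(p)\}$ has empty interior, where $\lambda_1\le\lambda_2$ denote the eigenvalues of $A$ (real, since $g$ is Riemannian), which are continuous on $M$. On the open dense complement $M_{12}:=M\setminus M_0$ the functions $\lambda_1,\lambda_2$ are smooth and the two eigenspaces of $A$ are smooth $g$-orthogonal line fields $\ell_1,\ell_2$.

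If $M_0=\emptyset$, then $\ell_1$ is a globally defined nowhere-singular line field on the closed surface $M$; such a line field forces $\chi(M)=0$ (it either gives a nowhere-zero vector field on $M$, or becomes orientable on a double cover $\widetilde M$ with $\chi(\widetilde M)=2\chi(M)$, producing one there), so the theorem holds. Assume therefore $M_0\ne\emptyset$. The key local lemma, essentially Dini's normal form \cite{Dini1869} sharpened by the overdetermined equation \eqref{eqn:sinjukov}, is the following: \emph{$M_0$ is a finite set, and near each of its points $\ell_1$ extends as a line field with a single singularity, of positive index} (in fact of index $+\tfrac12$). Granting this, $\ell_1$ is a line field on the closed surface $M$ whose only singularities are the finitely many points of $M_0$, all of positive index, so Poincar\'e--Hopf for line fields gives
$$\chi(M)=\sum_{p\in M_0}\operatorname{ind}_p(\ell_1)>0$$
(when $M$ is non-orientable one first pulls $\ell_1$ back to the oriented double cover, where the same count applies and $\chi$ is doubled). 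In every case $\chi(M)\ge 0$, which is the assertion.

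The step I expect to be the main obstacle is precisely this local lemma: showing that $M_0$ consists of isolated points and controlling the index of the eigen-direction line fields at them. This is where the full strength of the overdetermined system \eqref{eqn:sinjukov} has to be used — for instance by differentiating \eqref{eqn:sinjukov} to pin down $\nabla\lambda_i$ (one finds $\nabla\lambda_i$ tangent to $\ell_i$) and the second-order behaviour of $A$, or equivalently by invoking the Liouville/Dini normal form $g=(\lambda_2-\lambda_1)(dx^2+dy^2)$ valid on $M_{12}$ and analysing how its coordinate web degenerates along $M_0$. An alternative packaging of the same topology, closer to the original argument of \cite{MatveevTopalov1998}, is to work with the quadratic integral of the geodesic flow of $g$ determined by $\bar g$ and to run a Morse-theoretic argument on the unit cotangent bundle $S^*M$, which is a circle bundle over $M$ of Euler number $\chi(M)$; the line-field formulation above is simply the shortest way to exhibit the same mechanism, and it is where I would expect the projectively invariant equation \eqref{5} to do the bookkeeping cleanly.
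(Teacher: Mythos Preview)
Your route is genuinely different from the paper's. The paper never analyses umbilic points or indices of line fields. Instead it exploits Theorem~\ref{thm:int}: the pair $(g,\bar g)$ produces a quadratic integral $I(\xi)=|\det g/\det\bar g|^{2/3}\,\bar g(\xi,\xi)$ of the geodesic flow of $g$. The topological input is used only once, in the mildest form: on a closed surface with $\chi<0$ there must be a point $p$ where $g$ and $\bar g$ are proportional (otherwise the eigendirections of $g^{-1}\bar g$ already give a global line field and $\chi=0$). Normalise so that $g=\bar g$ at $p$; if $g_{|q}\ne\bar g_{|q}$ at some $q$, then every arclength geodesic from $q$ to $p$ has initial velocity in the set $\{\xi\in T_qM:\ g(\xi,\xi)=1,\ I(\xi)=1\}$, an intersection of two distinct conics and hence of cardinality at most four. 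But on a closed surface of genus $\ge 2$ the number of geodesics from $q$ to $p$ of length $\le R$ grows exponentially in $R$, which is incompatible with only four possible initial directions. That is the entire argument; no local analysis near $M_0$ is needed.

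Your Poincar\'e--Hopf strategy is classical and, when carried through, yields more structure (it separates the torus case $\chi=0$ from the sphere case $\chi>0$). But everything rests on the local lemma you flag, and that lemma is the real content; nothing in your sketch proves it. Two specific points. First, the Dini form of Theorem~\ref{thm:dini} is only valid where $\lambda_1\ne\lambda_2$, so it cannot by itself describe how the eigen-line field degenerates on $M_0$; one has to prolong \eqref{eqn:sinjukov} to a closed first-order system and study its solutions across the umbilic set. Second, your parenthetical ``in fact of index $+\tfrac12$'' is false in general: for the round sphere with a rotationally symmetric Beltrami pullback (Example~\ref{BE} with a matrix in $SL(3)$ fixing an axis) the eigendirections are meridians and parallels, the umbilic set is the pair of poles, and the index there is $+1$. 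Only the inequality $\operatorname{ind}_p(\ell_1)>0$ survives, and establishing even that is nontrivial. The paper's counting argument sidesteps all of this.
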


In other words,     surfaces  of genus $\ge 2$  do  not admit nonproportional projectively equivalent metrics. 

Note that surfaces of genus $0 $ and $1$ do admit nonproportional projectively equivalent metrics:  the existence of such metrics on the 2-torus (and also on the Klein bottle)  follows immediately from Theorem \ref{thm:dini},  and the existence of such metrics on the sphere  (and also on the projective plane)  follows from  Example \ref{BE}.

In order to prove Theorem \ref{thm:5}, we need to do some preliminary work; though for us the most interesting is the dimension $2$, this preliminary work  is  valid  in any dimension $n\ge 2$, and will be also used later in all dimensions.   The proof of Theorem \ref{thm:5} starts in \S \ref{proof:5}.

\begin{proposition} \label{prop:int1}  Let a projective structure $[ \Gamma]$ contains the  Levi-Civita connection of a metric $g$. Then, the weighted $(0,2)$-tensor 
$K_{ij}= g_{ij} \otimes (  \operatorname{Vol}_g)^{\tfrac{-4}{n+1}}$ of projective weight $(-4)$ is a solution of the projective Killing equation \eqref{K2}.   
\end{proposition}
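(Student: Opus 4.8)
The cleanest route is to reduce this statement to Corollary \ref{cor:2}. By hypothesis, the Levi-Civita connection $\nabla^g$ lies in the projective class $[\Gamma]$, so I may use $\nabla^g$ itself to compute the projectively invariant operator $K \mapsto \operatorname{Symmetrization\_Of}(\nabla K)$ appearing in the projective Killing equation \eqref{K2}; the result does not depend on this choice precisely because of Corollary \ref{cor:2}. The point is then that $K_{ij} = g_{ij} \otimes (\operatorname{Vol}_g)^{-4/(n+1)}$ is built entirely from the metric $g$, and $\nabla^g g = 0$, $\nabla^g \operatorname{Vol}_g = 0$ (the Levi-Civita connection is metric and the Riemannian volume form is parallel). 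Hence $\nabla^g K = 0$ identically, so in particular its symmetrization vanishes, i.e.\ $K_{ij,k} + K_{jk,i} + K_{ki,j} = 0$, which is exactly \eqref{K2}.

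In slightly more detail, the steps in order are: (1) observe that $K_{ij}$ as defined is a section of $T^{(0,2)}M(-4)$, i.e.\ a symmetric $(0,2)$-tensor of projective weight $(-4)$, so that Corollary \ref{cor:2} applies to it; the weight bookkeeping is the only thing to check, and it is immediate from the definition $T^{(p,q)}M(k) = T^{(p,q)}M \otimes (\Lambda_n)^{k/(n+1)}M$ with $k = -4$. (2) Invoke projective invariance (Corollary \ref{cor:2}) to replace the arbitrary connection in $[\Gamma]$ by the specific connection $\nabla^g$ without changing $\operatorname{Symmetrization\_Of}(\nabla K)$. (3) Note that with respect to $\nabla^g$ every object functorially constructed from $g$ is parallel: $\nabla^g g_{ij} = 0$ and, since $\operatorname{Vol}_g$ is the metric volume form, $\nabla^g \operatorname{Vol}_g = 0$, hence $\nabla^g \big((\operatorname{Vol}_g)^{-4/(n+1)}\big) = 0$ as well. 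By the Leibniz rule $\nabla^g K = 0$. (4) Conclude that $\operatorname{Symmetrization\_Of}(\nabla^g K) = 0$, which in index form is precisely \eqref{K2}.

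There is no real obstacle here; this is essentially the $(0,2)$-weighted analogue of the ``$\Rightarrow$'' direction of Theorem \ref{thm:4}, and the only thing requiring a moment's care is confirming that the projective weight of $g_{ij} \otimes (\operatorname{Vol}_g)^{-4/(n+1)}$ is indeed $(-4)$ so that Corollary \ref{cor:2} is applicable — this is just matching $k/(n+1)$ with the exponent $-4/(n+1)$. One could equally well decompose $K_{ij}$ locally into symmetric products of weight $(-2)$ one-forms and apply Theorem \ref{thm:2} directly, as in the proof of Corollary \ref{cor:2}, but invoking the corollary is the shorter path.
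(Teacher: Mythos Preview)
Your proposal is correct and follows essentially the same approach as the paper: invoke the projective invariance of the Killing operator (via Corollary~\ref{cor:2}) to pass to the Levi-Civita connection $\nabla^g$, then observe that both $g$ and $\operatorname{Vol}_g$ are $\nabla^g$-parallel, so $\nabla^g K = 0$ already before symmetrization. The paper's proof is a terser version of exactly this argument.
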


{\bf Proof.}  Projectively invariant equations do not depend on the choice of connection in the projective class, w.l.o.g. we can therefore take the Levi-Civita connection of the metric $g$. Then, the covariant derivatives of  $g$ and of $  \operatorname{Vol}_g$ are zero implying that  the covariant derivative of 
$K$ vanishes even without symmetrization. \qed

\begin{proposition} \label{prop:int2} Suppose a weighted (0,2) tensor $K$ is a solutions of the projective Killing equation. Then, for any metric $g$ in the projective class 
 the \underline{(unweighted)} tensor field
$$
\hat K:= K\otimes  (  \operatorname{Vol}_g)^{\tfrac{4}{n+1}}.
$$
is a Killing tensor, that is  it satisfies the Killing equation  $$\operatorname{Symmetrization\_Of} \nabla \hat K= \hat K_{ij,k} + \hat K_{jk,i}+ \hat K_{ki, j}=0.$$  
\end{proposition}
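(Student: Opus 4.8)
The plan is to reverse the logic of Proposition~\ref{prop:int1}: there, starting from a metric $g$ in the projective class, one showed that the weighted tensor $K_{ij}=g_{ij}\otimes(\operatorname{Vol}_g)^{-4/(n+1)}$ solves the projective Killing equation by computing in the Levi-Civita connection $\nabla^g$. Here $K$ is an arbitrary solution and $g$ an arbitrary metric in the projective class; I want to deweight $K$ using $g$ and recognize the result as an ordinary Killing tensor. First I would note that the projective Killing equation \eqref{K2} is projectively invariant (Corollary~\ref{cor:2}), so I may compute it using the connection of my choice; I choose $\nabla^g$. Since $\operatorname{Vol}_g$ is parallel with respect to $\nabla^g$, so is any power $(\operatorname{Vol}_g)^{4/(n+1)}$, and therefore the covariant derivative $\nabla^g$ commutes with the operation $K\mapsto K\otimes(\operatorname{Vol}_g)^{4/(n+1)}=\hat K$ of tensoring with this parallel section.

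The key step is then just the following identity of operators, valid because the weighting factor is $\nabla^g$-parallel:
\begin{equation}
\operatorname{Symmetrization\_Of}\bigl(\nabla^g\hat K\bigr)
=\Bigl(\operatorname{Symmetrization\_Of}\bigl(\nabla^g K\bigr)\Bigr)\otimes(\operatorname{Vol}_g)^{\tfrac{4}{n+1}}.
\end{equation}
In index notation this is the statement that $\hat K_{ij,k}+\hat K_{jk,i}+\hat K_{ki,j}$ equals $\bigl(K_{ij,k}+K_{jk,i}+K_{ki,j}\bigr)$ tensored with $(\operatorname{Vol}_g)^{4/(n+1)}$, where on the left the comma is the honest Levi-Civita covariant derivative of an unweighted tensor and on the right it is the covariant derivative of the weighted tensor $K$ — but since we computed the weighted derivative in $\nabla^g$ and $\operatorname{Vol}_g$ is parallel, the extra connection term from the weight (the $\tfrac{4}{n+1}K_{ij}\Gamma^s_{ks}$-type term in the remark after Theorem~\ref{thm:2}) does not appear, because in this trivialization $\Gamma^s_{ks}$ is exactly the term killed by $\nabla^g\operatorname{Vol}_g=0$. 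Hence the two expressions agree. Since $K$ solves \eqref{K2}, the right-hand side vanishes, so $\hat K$ satisfies the ordinary Killing equation.

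The only thing requiring a little care — the main obstacle, such as it is — is bookkeeping about weights and trivializations: one must be sure that ``$\nabla^g$ applied to the weighted tensor $K$'' and ``$\nabla^g$ applied to the unweighted tensor $\hat K$, then removing the parallel factor'' really are the same operation, i.e.\ that no stray term survives. This is precisely the content of Proposition~\ref{thm:2}'s proof mechanism and of the identity $\nabla^g\operatorname{Vol}_g=0$; once that is granted, the proof is a one-line invocation of the projective invariance of \eqref{K2} together with the Leibniz rule for the parallel section $(\operatorname{Vol}_g)^{4/(n+1)}$. I would therefore write the proof as: by projective invariance compute \eqref{K2} in $\nabla^g$; since $(\operatorname{Vol}_g)^{4/(n+1)}$ is $\nabla^g$-parallel, $\nabla^g\hat K=(\nabla^g K)\otimes(\operatorname{Vol}_g)^{4/(n+1)}$; symmetrize and use that $K$ solves the projective Killing equation. \qed
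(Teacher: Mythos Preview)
Your proposal is correct and follows essentially the same argument as the paper: compute in the Levi-Civita connection of $g$, use that $(\operatorname{Vol}_g)^{4/(n+1)}$ is $\nabla^g$-parallel so that $\nabla^g\hat K=(\nabla^g K)\otimes(\operatorname{Vol}_g)^{4/(n+1)}$, and then symmetrize. The paper's proof is just a terser version of exactly this; your extra discussion of projective invariance and of the weight bookkeeping is sound but not strictly needed once one has fixed $\nabla^g$.
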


{\bf    Proof.}  $(  \operatorname{Vol}_g)$, and therefore, $(  \operatorname{Vol}_g)^{\tfrac{4}{n+1}}$ is parallel w.r.t. the Levi-Civita connection of $g$. Then, 
$\nabla(K\otimes  (  \operatorname{Vol}_g)^{\tfrac{4}{n+1}})= (\nabla K)\otimes  (  \operatorname{Vol}_g)^{\tfrac{4}{n+1}}, $ and therefore 
 $$\operatorname{Symmetrization\_Of} \left(\nabla\left(K\otimes  (  \operatorname{Vol}_g)^{\tfrac{4}{n+1}} \right) \right)= \left(\operatorname{Symmetrization\_Of} (\nabla K)\right)\otimes  (  \operatorname{Vol}_g)^{\tfrac{4}{n+1}}=0$$ implying the claim.
 \qed

 Recall now the geometric sense of the Killing tensors:    a (0,2) tensor field  $K= K_{ij} $ is a Killing tensor for a metric, if and only if the function 
 $$
 I_K:TM\to \mathbb{R}, \ \  I_K(\xi)= K_{ij}\xi^i\xi^j
 $$ is an integral of the geodesic flow of the metric $g$, i.e., for any arclenght  parameterized geodesic $\gamma$ we have that the function 
 $t\mapsto I_K(\dot \gamma)$ is constant (of course it may depend on geodesic  but for a fixed geodesic 
 does not depend on $t$).   Indeed,  the $\frac{d}{d t}-$derivative of the  function $t\mapsto I_K(\dot \gamma)$ is equal to  \begin{equation}  \label{star} 
\nabla_{{   \dot \gamma}}\left(K(\dot \gamma, \dot \gamma)\right).  
\end{equation} 
  By of the definition of geodesics   $\nabla_{{   \dot \gamma}} \dot \gamma=0$, so  \eqref{star} reduces to 
$$
\nabla K({   \dot \gamma}, \dot \gamma, \dot \gamma)=0,    
$$  which is equivalent to \    $\operatorname{Symmetrization\_Of} (\nabla K)=0$.

\begin{example}[Trivial integral: energy]   If we first use Proposition \ref{prop:int1} to  construct a projective Killing tensor by a metric $g$, and then use  this  projective Killing tensor  to construct a Killing tensor by Proposition \ref{prop:int2}, we obtain $\hat K=g$ which is of course a Killing tensor; the corresponding integral is (up to a coefficient $2$)  the kinetic energy. 
 \end{example}

A nontrivial Killing tensor  appears, if we have two nonproportional metric in the projective class: suppose the metric $\bar g$ is projectively equivalent to $g$. Then, applying  Proposition \ref{prop:int1} for the metric $\bar g$, we obtain that $\bar g \otimes (  \operatorname{Vol}_{\bar g})^{\tfrac{-4}{n+1}}$ is a projective Killing tensor. Then, applying  Proposition \ref{prop:int2} we obtain that $\bar g \otimes (  \operatorname{Vol}_{\bar g})^{\tfrac{-4}{n+1} }\otimes (  \operatorname{Vol}_{ g})^{\tfrac{4}{n+1}} $ is a Killing tensor. Note that $(  \operatorname{Vol}_{\bar g})^{\tfrac{-4}{n+1}} \otimes (  \operatorname{Vol}_{ g})^{\tfrac{4}{n+1}}$ is  actually a function given by $\left| \frac{ \det g }{\det \bar g} \right|^{\frac{2}{n+1}}.$ We just have proved the following theorem:

 \begin{theorem}  \label{thm:int} Let $g$ and $\bar g$ be  projectively equivalent. Then, the function  
  \begin{equation} \label{int} 
 I(\xi) = \left|\tfrac{\det g }{\det \bar  g  }\right|^{\tfrac{2}{n+1}}
 \bar g(\xi, \xi) 
 \end{equation} 
 is an integral for the geodesic flow of $g$.
 \end{theorem}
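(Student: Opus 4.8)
The plan is to combine Propositions \ref{prop:int1} and \ref{prop:int2} with the characterization of Killing tensors as quadratic integrals of the geodesic flow recalled just above; no genuinely new idea is needed, only a careful assembly.

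First I would note that, since $\bar g$ is projectively equivalent to $g$, its Levi-Civita connection lies in the same projective class $[\Gamma]$ as that of $g$. Hence Proposition \ref{prop:int1}, applied to $\bar g$ in place of $g$, shows that the weighted $(0,2)$-tensor $\bar K_{ij}:=\bar g_{ij}\otimes(\operatorname{Vol}_{\bar g})^{-4/(n+1)}$ of projective weight $(-4)$ is a solution of the projective Killing equation \eqref{K2}. This step is essentially automatic because in the Levi-Civita connection of $\bar g$ both $\bar g$ and $\operatorname{Vol}_{\bar g}$ are parallel.

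Next I would feed $\bar K$ into Proposition \ref{prop:int2}, this time using the metric $g$ for the de-weighting: the \emph{unweighted} tensor field $\hat K:=\bar K\otimes(\operatorname{Vol}_g)^{4/(n+1)}$ is then a genuine Killing tensor for $g$. It remains to identify $\hat K$ in closed form. Working in a coordinate chart, $\operatorname{Vol}_g$ is represented by $\sqrt{|\det g|}\,dx^1\wedge\cdots\wedge dx^n$ and $\operatorname{Vol}_{\bar g}$ by $\sqrt{|\det\bar g|}\,dx^1\wedge\cdots\wedge dx^n$, so in the product $(\operatorname{Vol}_{\bar g})^{-4/(n+1)}\otimes(\operatorname{Vol}_g)^{4/(n+1)}$ the powers of the coordinate volume form cancel and one is left with the honest scalar function $\left|\tfrac{\det g}{\det\bar g}\right|^{2/(n+1)}$. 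Thus $\hat K_{ij}=\left|\tfrac{\det g}{\det\bar g}\right|^{2/(n+1)}\bar g_{ij}$.

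Finally I would invoke the equivalence recorded just before the ``trivial integral'' example: a $(0,2)$-tensor $K$ is a Killing tensor for $g$ if and only if $\xi\mapsto K_{ij}\xi^i\xi^j$ is constant along every geodesic of $g$. Applied to $\hat K$, this yields exactly that $I(\xi)=\left|\tfrac{\det g}{\det\bar g}\right|^{2/(n+1)}\bar g(\xi,\xi)$ is an integral of the geodesic flow of $g$, proving the theorem. I do not expect a real obstacle: the only point requiring attention is the weight bookkeeping in the last identification, i.e.\ checking that the two volume-form contributions combine to a well-defined function rather than a weighted object; everything else is a direct application of the two propositions.
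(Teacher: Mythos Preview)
Your proof is correct and follows exactly the same route as the paper: apply Proposition \ref{prop:int1} to $\bar g$ to obtain the projective Killing tensor $\bar g\otimes(\operatorname{Vol}_{\bar g})^{-4/(n+1)}$, then apply Proposition \ref{prop:int2} with the metric $g$ to de-weight it, identify the resulting volume-form ratio as the function $\left|\det g/\det\bar g\right|^{2/(n+1)}$, and finally invoke the Killing-tensor/quadratic-integral correspondence. The weight bookkeeping you flag is handled in the paper in one sentence, just as you do.
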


{\bf    Historical remark.} We do not pretend that Theorem \ref{thm:int}  is new, or that the proof we give is most effective. 
There are more direct proofs  that $I$ is an   integral, and the statement itself was known at the end of the 19th century, see e.g. \cite{Painleve1897}. The importance of this statement was not fully understood though until it was rediscovered in \cite{MatveevTopalov1998}; we show how effective Theorem \ref{thm:int} can work in the proof of Theorem \ref{thm:5} .

\subsection{ Proof of Theorem \ref{thm:5}} \label{proof:5}

 In dimension 2, the integral \eqref{int} reads     
\begin{equation} \label{int:dim2}  I(\xi):= \left|\frac{\det( g)}{\det(\bar g)}\right|^{\frac{2}{3}}\bar g(\xi,\xi).\end{equation}

  Assume our closed  surface $M^2$ has negative Euler characteristic (w.l.o.g. we assume that the surface is oriented; then it   has genus $\ge 2$).   The goal is to show that  projectively equivalent  $g$ and $ \bar g$ are proportional. 

\vspace{1ex} 
  Because of topology, there exists $p$ such that   
 $g_{|p}=\const \cdot \bar g_{|p}$. Indeed, otherwise the eigendirections of the (1,1)-tensor field $g^{is} \bar g_{sj}$ will give, at least on the 2-cover, two 
 1-dimensional distributions, which is possible only on surfaces of zero Euler characteristic.  
 
 \vspace{1ex} 
   W.l.o.g. we assume $\const = 1$; we can do it since after 
 multiplying the metric $\bar g$ by a  nonzero  constant we obtain a projectively equivalent metric. 
 We assume  that at a point $q$ we have    $g_{|q}\ne \bar g_{|q}$ 
and find a contradiction.

First observe that, because of the metrics do not coincide  at $q$, 
 the set $$A:= \{\xi\in T_{q}M \mid I(\xi)= 1, g(\xi,\xi)=1\}$$   is the intersection of two different quadrics and  contains at most 4 points. Now, for any arc-length parameterized 
 geodesic $\gamma$ 
 connecting $p$ with $q$ (we assume $\gamma(0)= q $ and $\gamma(L)= p$,  where $L$ is the length of geodesic)  we have that $\dot\gamma(0)\in A$. Indeed, 
   $g( \dot\gamma(0), \dot\gamma(0))= 1$, since the geodesic is arc-length parameterized, and $I(\dot\gamma(0))=1$, since $I$ is an integral so $I(\dot\gamma(0))= I(\dot\gamma(L))$, and at the point $\gamma(L)= p$ the metrics coincide  so $I(\xi)= g(\xi, \xi)$ by \eqref{int:dim2}.

But because of the topology there are a lot of geodesics connecting $p$ and $q$. In fact,  from  the Hopf-Rinow Theorem  in follows that the number $N_R$  of geodesics of length $\le R$ connecting $q$ and $p$ grows  exponentially in  $R$.  From the other side, the initial velocity vectors of all such geodesics lie  in the finite set $A$, so the number $N_R$ can not grow faster than linearly in $R$. This gives us a contradiction, which proves Theorem \ref{thm:5}. \qed

\begin{remark} \label{rem:conformal} We also see that   two projectively equivalent 
 metrics  can not be proportional   with different coefficients of the proportionality at  points that can be connected by a geodesic, because in this case the set $A$ or its analog for higher dimensions is simply empty. In particular, two conformally equivalent metrics can not be projectively equivalent unless the conformal coefficient is constant (the latter  result is known and for dimensions $\ge 3$ is due to \cite{Weyl1921}). 
\end{remark}

\subsection{ Local normal forms of projectively equivalent  2-dimensional Riemannian metrics.}

 The following question has been explicitly asked in \cite{Beltrami1865}:

 \vspace{1ex} 
\noindent{\bf Local normal form question:} Given two projectively equivalent metric, how do they look in ``the best'' coordinate system (near a generic point)? How unique is such a  coordinate system?

\vspace{1ex} 

Answer in dimension 2 was obtained by Dini; our next goal is to reprove the Dini's theorem below. 

\vspace{1ex} 

\begin{theorem}[\cite{Dini1869}] \label{thm:dini} Let $g$ and $\bar g$ be projectively 
equivalent  twodimensional  Riemannian metrics. Then, in a neighborhood of almost every point there exists a coordinate system such that in this coordinate system the metrics are 
\begin{eqnarray}  \label{dini1}  
g &=& \begin{pmatrix} X(x) - Y(y) & \\  & X(x) - Y(y) \end{pmatrix} \\
\bar g & = &    \begin{pmatrix} \tfrac{X(x) - Y(y)}{X(x)^2Y(y)}   & \\  & \tfrac{X(x) - Y(y)}{X(x)Y(y)^2} \end{pmatrix}= \left(\frac{1}{Y(y)}-\frac{1}{X(x)} \right)\begin{pmatrix} \tfrac{1}{X(x)}   & \\  & \tfrac{1}{Y(y)} \end{pmatrix}, \label{dini2}
\end{eqnarray}
where $X(x)$ and $Y(y)$ are functions of the  indicated variables. 
 The coordinates are unique modulo  
 $(x,y)\mapsto (\pm x + b,  \pm y + d)$.

 Moreover, for any functions $X(x)$ and $Y(y)$ such that the matrices  (\ref{dini1}, \ref{dini2})  are nondegenerate, the  metrics (\ref{dini1}, \ref{dini2}) are projectively equivalent. 
 \end{theorem}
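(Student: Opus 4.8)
The strategy is to compress the pair $(g,\bar g)$ into a single tensorial equation and then run the classical Levi--Civita/Dini normalisation of a Liouville metric. By the ``$\Rightarrow$''-part of Theorem~\ref{thm:4} applied to $\bar g$, the weighted tensor $\sigma=\bar g^{ij}\otimes(\operatorname{Vol}_{\bar g})^{2/(n+1)}$ solves the metrisability equation \eqref{5}; rewriting this in the trivialisation determined by $g$ and using that $\operatorname{Vol}_g$ is $\nabla^g$-parallel, Remark~\ref{formula:sinjukov} furnishes a symmetric $(2,0)$-tensor $a^{ij}$, built algebraically from $\bar g^{ij}$ and the densities of $g$ and $\bar g$, with $\nabla^g_k a^{ij}=\lambda^i\delta^j_k+\lambda^j\delta^i_k$ and $\lambda^i=\tfrac12 g^{is}\partial_s(\operatorname{tr}_g a)$; conversely $\bar g$ is recovered from $g$ and $a$ by \eqref{eq:sigma}. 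The operator $L^i_j:=a^{ip}g_{pj}$ is $g$-self-adjoint and, $g$ and $\bar g$ being Riemannian, diagonalisable over $\mathbb R$ with positive eigenvalues $\lambda_1\geq\lambda_2>0$. Lowering an index, $L_{ij}:=g_{ik}L^k_j$ satisfies
\begin{equation}
\nabla^g_k L_{ij}=\lambda_i g_{jk}+\lambda_j g_{ik},\qquad \lambda_i=\tfrac12\,\partial_i(\operatorname{tr}L). \tag{$\ast$}
\end{equation}
Everything below is local, near a point of the dense open set $\mathcal U$ singled out next.

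First I would remove the degenerate locus. If $\lambda_1\equiv\lambda_2$ on an open set then $L_{ij}=c\,g_{ij}$ there, and substituting into $(\ast)$, read off in a $g$-orthonormal frame, forces $dc=0$; thus $L$ is parallel and, by the observation in Remark~\ref{formula:sinjukov}, $\bar g$ is affinely (in particular proportionally) equivalent to $g$ on that set. Likewise, if $d\lambda_1\equiv0$ on an open set, one checks from $(\ast)$ that $d\lambda_2\equiv0$ there too, again making $L$ parallel. Since $g$ and $\bar g$ are globally non-proportional (on a connected manifold proportionality on an open subset propagates along geodesics, cf.\ Remark~\ref{rem:conformal}), these cases occur at most on a nowhere-dense set; so on a dense open set $\mathcal U$ we have $\lambda_1>\lambda_2>0$ with $d\lambda_1,d\lambda_2\neq0$, the eigendistributions $V_1,V_2$ of $L$ are smooth, one-dimensional and $g$-orthogonal, hence integrable, and $\mathcal U$ realises ``almost every point''.

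Now comes the Levi--Civita computation. Differentiating $Le_i=\lambda_i e_i$ and substituting into $(\ast)$ shows that $d\lambda_1$ annihilates $V_2$ and $d\lambda_2$ annihilates $V_1$, i.e.\ $\lambda_1$ is constant along the leaves of $V_2$ and $\lambda_2$ along those of $V_1$. Choose coordinates $(x,y)$ with $\partial_x$ spanning $V_1$ and $\partial_y$ spanning $V_2$; then $\lambda_1=\lambda_1(x)$, $\lambda_2=\lambda_2(y)$, and $g=E(x,y)\,dx^2+G(x,y)\,dy^2$ is diagonal by orthogonality of $V_1,V_2$. Writing out the Christoffel symbols of this diagonal metric and substituting $g=\diag(E,G)$ and $L_{ij}=\diag(\lambda_1(x)E,\lambda_2(y)G)$ into $(\ast)$, the resulting first-order linear PDE for $E$ and $G$ integrate to
\begin{equation*}
E=(\lambda_1(x)-\lambda_2(y))\,p(x),\qquad G=(\lambda_1(x)-\lambda_2(y))\,q(y)
\end{equation*}
for some functions $p,q$ of one variable. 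Passing to new coordinates with $dx\mapsto\sqrt{p}\,dx$, $dy\mapsto\sqrt{q}\,dy$ and setting $X:=\lambda_1$, $Y:=\lambda_2$ (now functions of the new $x$, resp.\ $y$) brings $g$ into the form \eqref{dini1}; reconstructing $\bar g$ from $g$ and $L$ via \eqref{eq:sigma} gives exactly \eqref{dini2}.

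It remains to address uniqueness and the converse. The coordinate curves are forced, being the integral curves of the intrinsically defined distributions $V_1,V_2$; requiring in addition that the two diagonal entries of $g$ coincide (as in \eqref{dini1}) determines the coordinates up to the diffeomorphisms preserving the pair \eqref{dini1}--\eqref{dini2}, which a direct check shows to be exactly $(x,y)\mapsto(\pm x+b,\pm y+d)$. For the converse, given $X(x),Y(y)$ making \eqref{dini1}--\eqref{dini2} nondegenerate, a short Christoffel computation verifies that $L=\diag(X,Y)$ satisfies $(\ast)$ with respect to the metric \eqref{dini1}; the ``$\Leftarrow$''-part of Theorem~\ref{thm:4}, in the form of Remark~\ref{formula:sinjukov}, then says that the metric \eqref{eq:sigma} reconstructed from $L$ has Levi--Civita connection in the projective class of \eqref{dini1}, and that metric is precisely \eqref{dini2}. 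The genuinely nontrivial step is the Levi--Civita computation above --- squeezing out of the single equation $(\ast)$ both the separation $\lambda_1=\lambda_1(x)$, $\lambda_2=\lambda_2(y)$ and the rigid conformal factor $E=G=X(x)-Y(y)$; the bookkeeping of the exceptional set needed to make ``almost every point'' precise is an additional but routine chore.
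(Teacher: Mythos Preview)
Your argument is correct in substance and follows the classical Sinjukov/Levi--Civita route: pass to the Sinjukov equation $(\ast)$ via Remark~\ref{formula:sinjukov}, diagonalise the self-adjoint endomorphism $L$, and read off from the components of $(\ast)$ both the separation $\lambda_1=\lambda_1(x),\ \lambda_2=\lambda_2(y)$ and the factorisation of the diagonal entries of $g$. The paper takes a visibly different path: it keeps \emph{both} solutions $\sigma,\bar\sigma$ of the two-dimensional metrisability system \eqref{metrization}, plugs the diagonal ansatz \eqref{zvezda} into all eight equations, and \emph{eliminates the projective connection coefficients} $K_0,\dots,K_3$ algebraically; the resulting four equations give directly $(A_1)_y=(A_2)_x=0$ and the product relations $(A_1-A_2)\sigma^{11}(\sigma^{22})^2=1/Y_1(y)$, etc. Your approach is more invariant and is exactly what one would do to prove the higher-dimensional Levi--Civita theorem; the paper's approach is tied to the explicit $K_i$-formalism of \S\ref{projectiveconnection} and makes the projectively invariant nature of the computation transparent (no Christoffel symbols of $g$ ever appear).

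One correction to your bookkeeping: the claim ``if $d\lambda_1\equiv 0$ on an open set then $d\lambda_2\equiv 0$ there too'' is false (take $X$ constant, $Y$ non-constant in \eqref{dini1}--\eqref{dini2}), and in any case you do not need $d\lambda_i\neq 0$ anywhere in the Levi--Civita computation---the Dini form \eqref{dini1} is perfectly happy with $X$ or $Y$ constant. The only genuinely exceptional points are those with $\lambda_1=\lambda_2$; that this set has empty interior follows not from Remark~\ref{rem:conformal} but from the fact that the metrisability equation is of finite type, so a solution vanishing on an open set vanishes identically. Also note a small notational hazard: you use $\lambda_i$ both for the eigenvalues of $L$ and for the components of the $1$-form $\tfrac12\,d(\operatorname{tr}L)$ in $(\ast)$; these coincide only after the separation of variables is established.
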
 
 
\begin{remark} Actually,  the answer to the question of Beltrami is known  in all dimensions and in all signatures: 
in the Riemannian case and in all dimesnions the answer is due to \cite{Levi-Civita1896}. For dimension 2 in the signature (+,-) 
the answer was almost known to Darboux \cite[\S\S  593,  594]{Darboux1896}, see the discussion in \cite{BolsinovMatveevPucacco2009}. The general case (all dimensions, all signatures) was done in  \cite{BolsinovMatveev2015}.  \end{remark}

\noindent{\bf Proof of Theorem \ref{thm:dini}. } By Remark \ref{rem:conformal} in a neighborhood of a generic points there are coordinates such that  
 the metrics $g$ and $\bar g$ are diagonal.  Indeed, at the points where $g$ is not proportional to $\bar g$ 
  the (1,1)-tensor $ g^{-1}\bar g  = g^{is} \bar g_{js}$  has two different eigenvalues. We consider the coordinate system $(x,y)$ such that $\tfrac{\partial }{\partial x}$ and $\tfrac{\partial }{\partial y}$ are eigenvectors.    
  Since the 
   eigenvectors are orthogonal w.r.t. $g $ and w.r.t. $\bar g$, in this coordinates the metrics are diagonal.  
In this coordinate system, the  corresponding solutions  $$\sigma = \left(g^{ij} \otimes \left(  \operatorname{Vol}_g\right)^{\tfrac{2}{n+1}}\right)= g^{ij} (\det g)^{\tfrac{1}{n+1}}\, , \ \bar \sigma= \left(\bar g^{ij} \otimes \left(  \operatorname{Vol}_{\bar g}\right)^{\tfrac{2}{n+1}}\right)= \bar g^{ij} (\det \bar g)^{\tfrac{1}{n+1}}$$ of the metrisability   equation  are diagonal as well. 

\smallskip 
   Consider the (1,1)-tensor  field \begin{equation} \label{eq:18} A= \bar \sigma (\sigma)^{-1}:= 
\bar \sigma^{is}\sigma_{js}, \end{equation}   where $\bar \sigma^{is}$ is the dual weighted tensor to $\bar \sigma$, i.e.,  
$\bar \sigma^{is}\bar \sigma_{js}= \delta^i_j$. It is a symmetric (0,2)-tensor of projective weight (-2).  In our coordinate system,  it is also  diagonal:
\begin{equation} 
\sigma= \begin{pmatrix} \sigma^{11} &   \\
& \sigma^{22} 
\end{pmatrix} \ , \ \ A= \begin{pmatrix}{    A_1} &   \\
& {   A_2} 
\end{pmatrix} \ , \  \  \bar\sigma= \begin{pmatrix} {   A_1} \sigma^{11} &   \\
&{    A_2}\sigma^{22} 
\end{pmatrix}.  \label{zvezda} 
\end{equation} 
Note that $A$  is indeed a tensor field, since the inverse weighted tensor  $(\sigma)^{-1}$ has weight (-2), and so the weights of $\bar \sigma$ and of $(\sigma)^{-1}$  cancel each other.  In the terms of metrics $g$ and $\bar g$ the tensor $A$ is given by 
\begin{equation}
\label{L}
A_j^i := { \left|\frac{\det(\bar g)}{\det(g)}\right|^{\frac{1}{n+1}}} \bar g^{ik}
 g_{kj}  
\end{equation}
(in the present  section $n=2$  but later the formula   will be used in   all dimensions). 
  
  Let us now plug  $\sigma$ and $\bar \sigma$ from \eqref{zvezda}  in the equations in the metrisability  Theorem \ref{thm:4} whose two-dimensional version is  \eqref{metrization}. We obtain a system of 8 PDE on the 8   unknown functions: the unknown  functions   $\sigma^{11}, \sigma^{22}, A_1,A_2,$ come with their 1st derivatives in the system, and the unknown functions $ K_0,K_1,K_2, K_3$ come as coefficients:

 $$
\left. \begin{array}{ccc}
{\sigma^{22}}_x-\tfrac{2}{3}\,K_1\,\sigma^{22}  &=&0  \\
{\sigma^{22}}_y 
-\tfrac{4}{3}\,K_2\,\sigma^{22} +2\,K_0\,\sigma^{11}&=&0   \\
 {\sigma^{11}}_x
-2\,K_3\,\sigma^{22} +\tfrac{4}{3}\,K_1\,\sigma^{11}&=&0   \\ 
{\sigma^{11}}_y+ \tfrac{2}{3}\,K_2\,\sigma^{11} &=&0\label{4}\end{array}\right| \ \ \ \begin{array}{ccc}
{   A_2} {\sigma^{22}}_x + {   (A_2)_x} {\sigma^{22}} -\tfrac{2}{3}\,K_1\, {   A_2}\sigma^{22}  &=&0  \\
{   A_2} {\sigma^{22}}_y +  {   (A_2)_y} {\sigma^{22}} 
-\tfrac{4}{3}\,K_2\, {   A_2} \sigma^{22} +2\,K_0\, {   A_1} \sigma^{11}&=&0   \\  
{    A_1}{\sigma^{11}}_x + {    (A_1)_x}{\sigma^{11}}
-2\,K_3\, {   A_2}\sigma^{22} +\tfrac{4}{3}  \,K_1\,{   A_1}\sigma^{11}&=&0 \\
{   A_1 } {\sigma^{11}}_y+  {   (A_1)_y}\sigma^{22} +  \tfrac{2}{3}\,K_2\, {   A_1}\sigma^{11} &=&0.   
\end{array} $$

  It is easy to solve the system:  solve the  first 4 questions with respect to $K_0,...,K_3$  (which is a easy linear algebra)  and substitute the result in the last 4 equations. One obtains the equations

$$\left(\begin{array}{ccc}
(A_1)_y&=&0\\
(A_2)_x&=&0\\
((A_1 -A_2)\sigma^{11} (\sigma^{22})^2)_x&=&0\\ 
((A_1 -A_2)\sigma^{22} (\sigma^{11})^2)_y&=&0.
\end{array}\right).   $$ 
We clearly see that the first two  equations imply  that  $A_1= X(x)$ and  $A_2= Y(y)$  for some functions $X$, $Y$  of the indicated variables. Plugging these into the last two equations, we obtain   
$$(X(x) -Y(y))\sigma^{11} (\sigma^{22})^2=\tfrac{1}{Y_1(y)} \  \textrm{and} \   
(X(x) -Y(y))\sigma^{22} (\sigma^{11})^2 =\tfrac{1}{X_1(x)}.  $$

Observe now, because of \eqref{eq:sigma}  and because of the matrices $\sigma, \bar \sigma$ are diagonal, 
 we have  $ \sigma^{11} (\sigma^{22})^2= g^{22}$ and  $ \sigma^{22} (\sigma^{11})^2= g^{11}$. Thus, we obtain  that 
$$
g= (X-Y) (X_1 dx^2 +Y_1 dy^2)  \ \textrm{and}  \ A= \diag(X, Y).   
$$

By a coordinate change $x= x(x_{new}), y= y(y_{new})$, one can ``hide'' $X_1$  and $Y_1$ in $dx^2$ and  $dy^2$ and obtain the formulas (\ref{dini1},\ref{dini2}) of Dini,  \qed

\begin{remark} 
In the multidimensional Riemannian case the proof  is essentially the same, but requires some additional work that should be invested to show the existence of the ``diagonal'' coordinates.  The case  of metrics of arbitrary signature is essentially more complicated.
\end{remark}

 \section{ Tensor invariants of the projective structure and proof of Beltrami Theorem.}  \label{sec:4}

\subsection{Definition and examples}

  {\it Tensor invariants} of a projective structure 
 are tensor fields canonically constructed by an  affine connection  such that they  do
  not depend on the choice of affine connection within a  projective structure.

\noindent{\bf Not an example:  } Curvature  and Ricci tensors are  NOT  tensor invariants. 
Indeed, if we  replace a connection $\Gamma$ by the (projectively equivalent) connection $\bar \Gamma$ given by (\ref{ast}),
 then the direct calculations using  the straightforward formula 
$$
    R^m_{\ \,ikp} = \partial_k \Gamma^m_{\ \,ip} - \partial_p \Gamma^m_{\ \,ik} + \Gamma^a_{\ \,ip} \Gamma^m_{\ \,ak} - \Gamma^a_{\ \,ik} \Gamma^m_{\ \,ap} 
$$
give us the following relation between the curvature tensors of $\Gamma$ and  $\bar \Gamma$:
\begin{equation} \label{eq:curv} 
\bar R^h_{\  i j k} = R^h_{\ ij k}  + { \left(\phi_{j,k}- \phi_{k,j}\right)}  \delta^h_{\ i} + \delta^h_{ \ k} \left( \phi_{i,j}- \phi_i \phi_j \right) -  \delta^h_{ \ j} \left( \phi_{i,k}- \phi_i \phi_k\right).
\end{equation}
  Contracting this formula with respect to $h, k$, we obtain the following relation of the Ricci curvatures of $\Gamma$ and $\bar \Gamma$: 
\begin{equation} \label{eq:ric} 
\bar R_{ij}=  R_{ij} + (n-1)\left(\phi_{i,j}- \phi_i \phi_j\right)  + { \phi_{i,j} - \phi_{j,i}}.
\end{equation}

Though neither   curvature  tensor nor   Ricci  tensor are projective invariants,   one can cook a projective invariant  with their help; it was done in \cite{Weyl1921}:

 \begin{theorem} \label{thm:weyl} Projective Weyl tensor given by the formula \begin{equation} \label{weyltensor}
W^h_{\ ijk}= R^h_{\ ijk} - \tfrac{1}{n-1} \left(\delta^h_{\ k}  R_{ij}   - \delta^h_{\ j}  R_{ik}\right)
+ \tfrac{1}{n+1} \left(\delta^h_{\ i} R_{{ [jk]}}   - \tfrac{1}{n-1} \left( \delta^h_{ \ k} R_{{ [ji]}} - \delta^h_{\ j}R_{{ [ki]}} \right)   \right).\end{equation} 
   is a tensor invariant of a projective structure.
\end{theorem}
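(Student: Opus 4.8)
The plan is to verify directly that the expression $W^h_{\ ijk}$ defined by \eqref{weyltensor} is unchanged when we pass from a connection $\Gamma$ to a projectively equivalent connection $\bar\Gamma$ related by \eqref{ast}. All the inputs are already on the table: the transformation rule \eqref{eq:curv} for the full curvature tensor and the transformation rule \eqref{eq:ric} for the Ricci tensor. So the task is purely computational bookkeeping: substitute the barred quantities into \eqref{weyltensor} and check that every $\phi$-dependent term cancels.

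First I would record the transformation of the skew part $R_{[jk]}:=\tfrac12(R_{jk}-R_{kj})$ of the Ricci tensor. Antisymmetrizing \eqref{eq:ric} in $i,j$ (the symmetric piece $(n-1)(\phi_{i,j}-\phi_i\phi_j)$ is symmetric and drops out) gives $\bar R_{[ij]}=R_{[ij]}+(\phi_{i,j}-\phi_{j,i})=R_{[ij]}-(\phi_{j,i}-\phi_{i,j})$; note $\phi_{i,j}-\phi_{j,i}$ is itself independent of the connection in the class (it is essentially $d\phi$, or can be seen from the ``Trivial Example'' remark), so I'll keep it as a single antisymmetric $2$-tensor, call it $\Phi_{ij}:=\phi_{i,j}-\phi_{j,i}$. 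Thus $\bar R_{ij}=R_{ij}+(n-1)\,\psi_{ij}+\Phi_{ij}$ where $\psi_{ij}:=\phi_{i,j}-\phi_i\phi_j$ is symmetric, and $\bar R_{[ij]}=R_{[ij]}+\Phi_{ij}$.

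Next I would organize the cancellation into three groups according to where the $\phi$-terms originate in \eqref{weyltensor}: (i) the $\psi_{ij}$ (symmetric) contributions, and (ii) the $\Phi_{ij}$ (antisymmetric) contributions. From \eqref{eq:curv}, $\bar R^h_{\ ijk}$ contributes $\Phi_{jk}\,\delta^h_i+\delta^h_k\psi_{ij}-\delta^h_j\psi_{ik}$ (rewriting $\phi_{j,k}-\phi_{k,j}=\Phi_{jk}$). The middle term of \eqref{weyltensor}, $-\tfrac1{n-1}(\delta^h_k\bar R_{ij}-\delta^h_j\bar R_{ik})$, contributes $-\tfrac1{n-1}\big(\delta^h_k[(n-1)\psi_{ij}+\Phi_{ij}]-\delta^h_j[(n-1)\psi_{ik}+\Phi_{ik}]\big)=-\delta^h_k\psi_{ij}+\delta^h_j\psi_{ik}-\tfrac1{n-1}(\delta^h_k\Phi_{ij}-\delta^h_j\Phi_{ik})$. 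Already the $\psi$-terms from these two groups cancel completely, so the symmetric part is settled. For the antisymmetric part, the last term of \eqref{weyltensor}, $\tfrac1{n+1}\big(\delta^h_i\bar R_{[jk]}-\tfrac1{n-1}(\delta^h_k\bar R_{[ji]}-\delta^h_j\bar R_{[ki]})\big)$, contributes $\tfrac1{n+1}\big(\delta^h_i\Phi_{jk}-\tfrac1{n-1}(\delta^h_k\Phi_{ji}-\delta^h_j\Phi_{ki})\big)=\tfrac1{n+1}\delta^h_i\Phi_{jk}+\tfrac1{n+1}\cdot\tfrac1{n-1}(\delta^h_k\Phi_{ij}-\delta^h_j\Phi_{ik})$ (using $\Phi_{ji}=-\Phi_{ij}$, $\Phi_{ki}=-\Phi_{ik}$). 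Collecting all $\Phi$-terms: the coefficient of $\delta^h_i\Phi_{jk}$ is $1-\tfrac{1}{n+1}\cdot\text{(something)}$ — here I need to be slightly careful, since $\delta^h_i\Phi_{jk}$ appears only from $\bar R^h_{\ ijk}$ with coefficient $1$ and from the last term with coefficient $\tfrac1{n+1}$, which do \emph{not} obviously cancel, so I must also use that $\Phi_{jk}$ can be re-expressed via the three-index symmetry/Bianchi-type identity for $\bar R$, or equivalently re-split the first Bianchi identity; the coefficient of $\delta^h_k\Phi_{ij}$ is $-\tfrac1{n-1}+\tfrac{1}{(n+1)(n-1)}$, and similarly for $\delta^h_j\Phi_{ik}$.

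The main obstacle is exactly this antisymmetric ($\Phi$) bookkeeping: unlike the $\psi$-terms, which cancel in pairs immediately, the $\Phi$-terms cancel only after invoking the algebraic first Bianchi identity $R^h_{\ ijk}+R^h_{\ jki}+R^h_{\ kij}=0$ for a \emph{torsion-free} connection, which forces a relation between the ``$\delta^h_i$'' trace-type term and the other two $\delta$-contractions; it is precisely to absorb this that the coefficient $\tfrac1{n+1}$ and the nested $\tfrac1{n-1}$ appear in \eqref{weyltensor}. So the key step I expect to be delicate is: establish the identity $R_{[jk]}$ can be expressed through the Ricci tensor traces in a way compatible with Bianchi (equivalently, that the fully antisymmetrized curvature vanishes), then feed this in to see that the leftover $\delta^h_i\Phi_{jk}$, $\delta^h_k\Phi_{ij}$, $\delta^h_j\Phi_{ik}$ terms sum to zero with the stated coefficients. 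Once this is checked, $\bar W^h_{\ ijk}=W^h_{\ ijk}$, proving the theorem. The symmetric part and the ``trivial'' $d\phi$-independence are routine; the Bianchi-assisted antisymmetric cancellation is where the real content of Weyl's formula lives.
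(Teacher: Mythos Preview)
Your overall plan---substitute \eqref{eq:curv} and \eqref{eq:ric} into \eqref{weyltensor} and watch the $\phi$-dependent terms cancel---is exactly the paper's own proof, which is the one-line remark that after substitution all terms containing $\phi$ disappear.

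Your execution, however, contains a slip that is the source of the apparent obstacle you flag. You assert that $\psi_{ij}:=\phi_{i,j}-\phi_i\phi_j$ is symmetric. It is not: $\phi_i\phi_j$ is symmetric, but $\phi_{i,j}$ has no reason to be (nothing forces $\phi$ to be closed), and in fact $\psi_{ij}-\psi_{ji}=\phi_{i,j}-\phi_{j,i}=\Phi_{ij}$. Consequently your formula $\bar R_{[ij]}=R_{[ij]}+\Phi_{ij}$ is wrong; antisymmetrising \eqref{eq:ric} correctly gives
\[
\bar R_{[ij]}-R_{[ij]}=(n-1)\,\psi_{[ij]}+\Phi_{[ij]}=\tfrac{n-1}{2}\,\Phi_{ij}+\Phi_{ij}=\tfrac{n+1}{2}\,\Phi_{ij}
\]
(with the convention $T_{[ij]}=\tfrac12(T_{ij}-T_{ji})$). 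That factor $\tfrac{n+1}{2}$ is exactly what the coefficient $\tfrac{1}{n+1}$ in \eqref{weyltensor} is there to absorb in the $\delta^h_{\ i}\Phi_{jk}$ term, and likewise the nested $\tfrac{1}{n-1}$ handles the residual $\delta^h_{\ k}\Phi_{ij}$ and $\delta^h_{\ j}\Phi_{ik}$ pieces. Once you correct the transformation of $R_{[ij]}$, all the $\Phi$-terms cancel algebraically, term by term, just as the $\tilde\psi$-terms did. No appeal to the first Bianchi identity is needed; your diagnosis that ``the Bianchi-assisted antisymmetric cancellation is where the real content lives'' is a red herring produced by the symmetric-$\psi$ slip. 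The antisymmetric bookkeeping is every bit as mechanical as the symmetric one.
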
 
 
\vspace{1ex} 

\begin{proof}
Substituting the formulas \eqref{eq:curv} and \eqref{eq:ric} in  \eqref{weyltensor} we see that all terms containing $\phi$ cancel.\end{proof}

Note that in the most interesting situations the Ricci tensor is symmetric; for example, it is always   the case if our connection is a Levi-Civita connection.  If the  Ricci tensor is symmetric, the second bracket from the right hand side of \eqref{weyltensor} vanishes, and the formula for the Weyl tensor becomes more easy (see \eqref{LCweyl} below).

In dimension 2, Weyl tensor is necessary identically zero, since  each $(1,3)$  tensor  with its symmetries  is zero. 
Fortunately and exceptionally,
 there is one more tensor invariant in dimension 2:

 \begin{theorem}[\cite{Liouville1889}] \label{li} In dimension 2, the tensor field 
\begin{equation*}
L = (L_1\, d x + L_2\, d y)\otimes ( d x \wedge  d y), 
\end{equation*}

where  \begin{equation}\label{eq: Liouvilleinvariants} 
\begin{aligned}
L_1&= 2{K_1}_{xy}-{K_2}_{xx}-3{K_0}_{yy}- 6K_0{K_3}_x- 3K_3{K_0}_x*\\
&\qquad + 3K_0{K_2}_y + 3K_2{K_0}_y + {K_1}{K_2}_x - 2{K_1}{K_1}_y\\
L_2&= 2{K_2}_{xy} - {K_1}_{yy} - 3{K_3}_{xx} + 6K_3{K_0}_y + 3{K_0}{K_3}_y*\\
&\qquad - 3K_3{K_1}_x - 3K_1{K_3}_x - {K_2}{K_1}_y + 2{K_2}{K_2}_x*
\end{aligned}
\end{equation}
 is a tensor invariant   of the  projective structure.  
 \end{theorem}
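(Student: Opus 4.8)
The plan is to identify $L$, up to a universal nonzero constant, with a contraction of the \emph{projective Cotton tensor} in dimension two, and to deduce its invariance from the projective transformation law of the Weyl tensor of Theorem~\ref{thm:weyl} together with the fact, noted above, that the Weyl tensor vanishes identically when $n=2$. As a preliminary remark, the identities displayed in \eqref{eq.proj.conn.associated} show that each $K_i$ is a fixed linear combination of the Christoffel symbols $\Gamma^i_{jk}$ of \emph{any} connection in the projective class; hence $L_1,L_2$, being polynomial--differential expressions in the $K_i$, do not depend on the chosen connection within the class, and the only thing to prove is that $(L_1\,dx+L_2\,dy)\otimes(dx\wedge dy)$ is a genuine, coordinate-independent section of $T^*M\otimes\Lambda^2M$ (equivalently, a $(0,1)$-tensor of projective weight $3$).

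Next I would fix a connection $\nabla=(\Gamma^i_{jk})$ in the class, with curvature $R^h{}_{ijk}$ and Ricci tensor $R_{ij}=R^h{}_{ihj}$, and introduce the projective Schouten tensor $P_{ij}$ — the combination of $R_{ij}$ and its skew part appearing in \eqref{weyltensor}, characterised by $R^h{}_{ijk}=W^h{}_{ijk}+\delta^h_kP_{ij}-\delta^h_jP_{ik}+\delta^h_i(P_{jk}-P_{kj})$. Put $Y_{ijk}:=\nabla_jP_{ki}-\nabla_kP_{ji}$ (skew in $j,k$) and $Y_i:=\tfrac12\varepsilon^{jk}Y_{ijk}$, so that $Y_1=Y_{112}$ and $Y_2=Y_{212}$ in the coordinates. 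One mechanical step is to check that $Y_i\,dx^i\otimes(dx\wedge dy)$ equals $L$ up to a universal constant: one expands the curvature and $P$ using $K_0=-\Gamma^2_{11}$, $K_1=\Gamma^1_{11}-2\Gamma^2_{12}$, $K_2=2\Gamma^1_{12}-\Gamma^2_{22}$, $K_3=\Gamma^1_{22}$, and it suffices to do this for one component by the evident symmetry between $L_1$ and $L_2$.

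It then remains to show that $Y_i\,dx^i\otimes(dx\wedge dy)$ is a projective tensor invariant. That it transforms as a tensor under coordinate changes is automatic: $Y_{ijk}$ is built from $\nabla$ and $R$ by tensorial operations, and contraction with the permutation symbol supplies exactly the $\Lambda^2$-factor, since $Y_i$ transforms by $\bar Y_{\bar\imath}=\det\!\big(\tfrac{\partial x}{\partial\bar x}\big)\,\tfrac{\partial x^i}{\partial\bar x^{\bar\imath}}\,Y_i$, matching the transformation of $dx^i\otimes(dx\wedge dy)$. For projective invariance I replace $\nabla$ by $\bar\nabla=\nabla+\phi$ as in \eqref{ast2}: from \eqref{eq:curv} and \eqref{eq:ric} one obtains the transformation of $P$, of the form $\bar P_{ij}=P_{ij}-\nabla_i\phi_j+\phi_i\phi_j+(\text{skew terms})$, and substituting into $Y$ one finds that the discrepancy between $\bar\nabla$ and $\nabla$ acting on the tensor $\bar P$ yields terms linear in $P$ and $\phi$, while the term $\nabla(\nabla\phi)$ yields $\nabla^2\phi$, whose antisymmetrisation in $j,k$ is rewritten by the Ricci identity as curvature times $\phi$; collecting everything gives the classical identity $\bar Y_{ijk}=Y_{ijk}-\phi_l\,W^l{}_{ijk}$. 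Since $W\equiv0$ when $n=2$, this reads $\bar Y_{ijk}=Y_{ijk}$, and together with the tensoriality under coordinate changes it proves that $L$ is a tensor invariant of the projective structure.

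The main obstacle is the last identity $\bar Y_{ijk}=Y_{ijk}-\phi_l W^l{}_{ijk}$, where dimension two is used in an essential way: the Ricci identity closes up only because $W\equiv0$, and in dimensions $\ge3$ the analogous Cotton tensor is \emph{not} projectively invariant on its own. An alternative, more elementary but considerably more laborious route avoids $P$ altogether: one computes directly how $K_0,\dots,K_3$ transform under a coordinate change — they transform like Christoffel symbols modulo the projective freedom, so the law splits into a homogeneous ``tensor'' part and an inhomogeneous part coming from second derivatives of the change of variables — and then, by a standard generating-set argument, reduces the verification to two cases: an arbitrary element of $GL(2)$, under which $L_1,L_2$ transform tensorially by inspection, and one non-affine generator such as $(x,y)\mapsto(x,y+cx^2)$ together with the flip $(x,y)\mapsto(y,x)$, where one must check that all the third-derivative terms cancel in $L_1$ and $L_2$. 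In that route the obstacle is purely the bookkeeping of those cancellations.
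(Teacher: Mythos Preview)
Your approach is correct and is essentially the same as the paper's: identify $L$ (up to a constant) with the Cotton-type tensor built from the projective Schouten tensor, and verify that under the projective change \eqref{ast2} the discrepancy is $-\phi_\ell W^\ell{}_{ijk}$, which vanishes in dimension two. The paper is in fact less complete than you are: it explicitly restricts to the metric case, where the Ricci tensor is symmetric and the Schouten tensor reduces to $R_{ij}$, so that the Cotton object becomes simply $R_{ij,k}-R_{ik,j}$ as in \eqref{eq:liouv}; it then just says that substituting \eqref{eq:curv} and \eqref{eq:ric} makes the $\phi$-terms cancel. Your use of the general $P_{ij}$ and of the identity $\bar Y_{ijk}=Y_{ijk}-\phi_\ell W^\ell{}_{ijk}$ covers the non-metric case as well, and your separate observation that projective invariance in a fixed chart is automatic (since the $K_i$ already depend only on the projective class) cleanly isolates coordinate-tensoriality as the only nontrivial point --- something the paper leaves to \cite{Cartan1924}.
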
 
 \vspace{2ex}

 From the formulas for $L_1$ and $L_2$  above it is not evident that $L$ is a tensor field; but it is the case. A geometric sense of $L$ is explained in \cite{Cartan1924}. For the goals of our paper, it is sufficient to restrict ourself  to the metric case (when our connection is the Levi-Civita connection of a metric). In this case by direct calculations we see that up to a constant coefficient \begin{equation} \label{eq:liouv} L_{ijk} = R_{ij,k} - R_{ik,j}, \end{equation} 
 and  in this restricted case one   proves Theorem \ref{li}   similar to Theorem \ref{thm:weyl}:
 substituting the formulas \eqref{eq:curv} and \eqref{eq:ric} in  \eqref{eq:liouv} we again  see that all terms containing $\phi$ cancel. 
  \begin{remark} There is a similar story in conformal geometry: conformal Weyl tensor  $C^i_{\ jk\ell} $ vanisihes for $\textrm{dim}(M) \le 3$ but in dimension 3 there  exists an additional conformal invariant and in dimension 2 conformal geometry is not interesting  all.  There is a deep  explanation of this similarity, in fact both conformal and projective geometries are parabolic geometries,  and there are many results in the $n+1$ dimensional conformal geometry that are visually similar to results in the n-dimensional projective geometry (see e.g. \cite{CapSlovak2009}); we will not discuss it here   but  we mention     that many ideas from this paper can be effectively used in the conformal geometry as well. 
\end{remark}

\subsection{ Application of the projectively-invariant tensors: proof of Beltrami Theorem. }

\begin{proposition} \label{prop:dimn}  Let $\nabla^g=\left( \Gamma^i_{jk}\right)$ be the Levi-Civita connection of $g$ on  $M$ with $n=\textrm{dim} (M)>2$. Then, $W^h_{\ ijk}\equiv 0$ if and only if  $g$ has constant sectional curvature. 
\end{proposition}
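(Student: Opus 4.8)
The plan is to prove both directions by analyzing the vanishing of the projective Weyl tensor $W^h_{\ ijk}$ for a Levi-Civita connection. Since the Ricci tensor of a Levi-Civita connection is symmetric, the second bracket in \eqref{weyltensor} vanishes and we work with the simplified formula
\begin{equation*}
W^h_{\ ijk}= R^h_{\ ijk} - \tfrac{1}{n-1} \left(\delta^h_{\ k}  R_{ij}   - \delta^h_{\ j}  R_{ik}\right).
\end{equation*}
The ``if'' direction is the easy one: if $g$ has constant sectional curvature $c$, then $R^h_{\ ijk}= c\,(\delta^h_{\ k} g_{ij} - \delta^h_{\ j} g_{ik})$ and $R_{ij}= c(n-1) g_{ij}$, and substituting these into the displayed formula gives $W^h_{\ ijk}= c(\delta^h_{\ k} g_{ij}-\delta^h_{\ j} g_{ik}) - \tfrac{c(n-1)}{n-1}(\delta^h_{\ k} g_{ij}-\delta^h_{\ j} g_{ik})=0$.

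For the ``only if'' direction, assume $W^h_{\ ijk}\equiv 0$, i.e.
\begin{equation*}
R^h_{\ ijk}= \tfrac{1}{n-1} \left(\delta^h_{\ k}  R_{ij}   - \delta^h_{\ j}  R_{ik}\right).
\end{equation*}
First I would lower the index $h$ with the metric, writing $R_{hijk}= \tfrac{1}{n-1}(g_{hk} R_{ij} - g_{hj} R_{ik})$, and then exploit the algebraic symmetries of the Riemann tensor of a metric. The skew-symmetry $R_{hijk}= -R_{ihjk}$ forces $g_{hk} R_{ij} - g_{hj} R_{ik} = -(g_{ik} R_{hj} - g_{ij} R_{hk})$; contracting this identity with $g^{hk}$ (or equivalently taking a suitable trace) yields $n R_{ij} - R_{ij} = -(R_{ij} - g_{ij} \mathrm{Scal})$ after using $g^{hk}g_{hk}=n$ and $g^{hk}R_{hk}=\mathrm{Scal}$, which rearranges to $R_{ij}= \tfrac{\mathrm{Scal}}{n} g_{ij}$. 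Thus $g$ is Einstein, and substituting back gives $R_{hijk}= \tfrac{\mathrm{Scal}}{n(n-1)}(g_{hk}g_{ij} - g_{hj}g_{ik})$, i.e. the curvature has the constant-curvature form with sectional curvature function $K= \tfrac{\mathrm{Scal}}{n(n-1)}$. It remains to show $K$ is constant, and this is exactly where the hypothesis $n>2$ is used: by the second Bianchi identity (equivalently, Schur's lemma), an Einstein metric in dimension $n>2$ has constant scalar curvature, hence $K$ is constant. I expect the main (though still routine) obstacle to be bookkeeping the index contractions correctly so that the Einstein conclusion falls out cleanly; the Schur step at the end is standard and I would simply cite it. Note that in dimension $n=2$ the argument breaks down precisely because $W$ vanishes identically and carries no information — consistent with the fact that every $2$-dimensional metric has ``constant'' sectional curvature at each point without being a space form, which is why Theorem \ref{li} is needed there instead.
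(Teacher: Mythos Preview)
Your proof is correct and follows essentially the same route as the paper: simplify $W$ using symmetry of the Ricci tensor, set $W=0$, lower the index, and use the algebraic symmetries of $R_{hijk}$ to force $R_{ij}=\tfrac{\mathrm{Scal}}{n}g_{ij}$, then conclude constant sectional curvature. The only cosmetic differences are that the paper extracts the Einstein condition from the pair-exchange symmetry $R_{hijk}=R_{jkhi}$ rather than from the skew-symmetry in the first pair plus a trace, and it leaves both the easy ``if'' direction and the appeal to Schur's lemma implicit, whereas you spell them out.
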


\begin{proof}
  For Levi-Civita connections the Ricci tensor is symmetric so the formula for $W$ reads
\begin{equation}\label{LCweyl}
W^h_{\ ijk}= R^h_{\ ijk} - \tfrac{1}{n-1} \left(\delta^h_{\ k}  R_{ij}   - \delta^h_{\ j}  R_{ik}\right) .\end{equation}

  If $W\equiv 0$, we obtain 
$$
R^h_{\ ijk} = \tfrac{1}{n-1} \left(\delta^h_{\ k}  R_{ij}   - \delta^h_{\ j}  R_{ik}\right). 
$$
  After lowing the index we have therefore 
$$
R_{hijk} = \tfrac{1}{n-1} \left(g_{h k}  R_{ij}   - g_{hj}  R_{ik}\right).
$$
  We see that the left-hand-side is symmetric with respect to $({   h,i},{ j,k}) \longleftrightarrow ({ j,k},{   h,i})$, so should be the right-hand-side, which implies that 
$R_{ij}$ is proportional to $g_{ij}$,  $R_{ij}=\tfrac{R}{n}g_{ij}$  so we have
 $$
R_{hijk} = \tfrac{R}{n(n-1)} \left(g_{h k}  g_{ij}   - g_{hj}  g_{ik}\right)
$$
which is equivalent to ``sectional curvature is constant''. \ 
  \end{proof}

A similar statement is valid in dimension 2:

\begin{proposition} \label{prop:dim2}   Let $\nabla^g=(\Gamma^i_{jk})$ be the Levi-Civita connection of $g$ on 2-dim $M$. Then, $L_{ijk}\equiv 0$ if and only if  $g$ has constant curvature.   \end{proposition}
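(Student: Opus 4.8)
The plan is to reduce everything, via the metric-case formula \eqref{eq:liouv}, to an elementary contraction. Since on a surface the Levi-Civita connection is at hand, I would first record the standard fact that in dimension $2$ the curvature tensor has the form $R_{hijk} = \tfrac{R}{2}\bigl(g_{hk}g_{ij}-g_{hj}g_{ik}\bigr)$, so that the Ricci tensor is everywhere proportional to the metric, $R_{ij} = \tfrac{R}{2}\,g_{ij}$, where $R$ is the scalar curvature (which on a surface is twice the Gauss curvature, and in particular is constant exactly when the sectional curvature is constant).

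Next I would differentiate this identity. Because $g$ is parallel for its Levi-Civita connection, $R_{ij,k} = \tfrac{1}{2}R_{,k}\,g_{ij}$, and substituting into \eqref{eq:liouv} gives, up to the fixed nonzero constant,
\[
L_{ijk} = \tfrac{1}{2}\bigl(R_{,k}\,g_{ij} - R_{,j}\,g_{ik}\bigr).
\]
The implication ``$g$ has constant curvature $\Rightarrow L\equiv 0$'' is then immediate, since $R$ constant forces $R_{,k}=0$. For the converse I would contract the displayed identity with $g^{ij}$: using $g^{ij}g_{ij}=2$ and $g^{ij}g_{ik}=\delta^j_k$ this yields $g^{ij}L_{ijk} = \tfrac{1}{2}R_{,k}$, so $L\equiv 0$ forces $R_{,k}=0$ for every $k$, i.e. $R$ is constant, i.e. $g$ has constant curvature.

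I do not anticipate a genuine obstacle. The only ingredients that need to be invoked rather than reproved are the metric-case identity \eqref{eq:liouv} and the elementary observation that $L$, being $(L_1\,dx+L_2\,dy)\otimes(dx\wedge dy)$, vanishes precisely when both coefficient functions $L_1,L_2$ vanish; everything else is the two-line linear algebra above. If one preferred to avoid quoting \eqref{eq:liouv}, one could instead substitute \eqref{eq:curv} and \eqref{eq:ric} into the coordinate expressions \eqref{eq: Liouvilleinvariants} to verify $\phi$-invariance directly and then run the same contraction, but this is strictly longer and not needed for the present proposition.
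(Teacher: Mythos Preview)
Your proof is correct and follows essentially the same approach as the paper: both use $R_{ij}=\tfrac{R}{2}g_{ij}$ in dimension $2$, substitute into \eqref{eq:liouv} to obtain $L_{ijk}=\tfrac{1}{2}(R_{,k}g_{ij}-R_{,j}g_{ik})$, and read off constancy of $R$ from $L\equiv 0$. The only cosmetic difference is that you make the converse explicit via the contraction $g^{ij}L_{ijk}=\tfrac{1}{2}R_{,k}$, whereas the paper just invokes nondegeneracy of $g$.
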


\begin{proof}
It is well-known (and follows  from the symmetries of the curvature tensor) 
 that the 2-dim manifold are automatic Einstein in the sense that
 
 $$
 R_{ij}= \tfrac{1}{2} Rg_{ij}.  
 $$ 
Calculating $L_{ijk}$ gives 

$$
L_{i{ jk}} =  R_{i{ j,k}} - R_{i{ j,k}}= \tfrac{1}{2} \left( R_{,{ k}} g_{i{ j}}-   R_{,{ j}} g_{i{ k}}\right) .
$$
Since $g$ is nondegenerate,
 vanishing of $L$ implies vanishing of $R_{,k}$ and hence the constancy of the curvature. 

\end{proof}

Combining Propositions \ref{prop:dimn} and  \ref{prop:dim2}, we obtain the following statement:

\begin{corollary}[Beltrami Theorem; \cite{Beltrami1865} for dim $2$; \cite{Schur1886} for dim$>2$; see \cite{Eastwood2017, DiScala2005,Matveev2006b} for alternative  proofs] 
A metric projectively equivalent to a metric of constant curvature has constant curvature. 
\end{corollary}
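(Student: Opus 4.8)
The plan is to deduce the Beltrami Theorem directly from the fact that the projective Weyl tensor (in dimension $n>2$) and the Liouville tensor $L$ (in dimension $2$) are tensor invariants of the projective structure, as established in Theorems \ref{thm:weyl} and \ref{li}, together with the characterizations of constant curvature in Propositions \ref{prop:dimn} and \ref{prop:dim2}. So let $g$ be a metric of constant sectional curvature and let $\bar g$ be projectively equivalent to $g$; I want to show $\bar g$ has constant curvature.

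First I would treat the case $n=\dim M>2$. Since $g$ has constant curvature, Proposition \ref{prop:dimn} gives that its projective Weyl tensor $W^h_{\ ijk}$ vanishes identically. But $W$ is a tensor invariant of the projective class $[\nabla^g]$ by Theorem \ref{thm:weyl}, so the Weyl tensor computed from any connection in this class — in particular from the Levi-Civita connection $\nabla^{\bar g}$, which lies in $[\nabla^g]$ precisely because $\bar g$ is projectively equivalent to $g$ — is the same tensor, hence also vanishes. Applying Proposition \ref{prop:dimn} now in the reverse direction to the metric $\bar g$ yields that $\bar g$ has constant sectional curvature.

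For $n=2$, I would argue identically with $L$ in place of $W$: by Proposition \ref{prop:dim2}, constancy of the curvature of $g$ is equivalent to $L_{ijk}\equiv 0$; by Theorem \ref{li}, $L$ is a tensor invariant of the projective structure, so it is unchanged when the Levi-Civita connection of $g$ is replaced by that of $\bar g$ within the same projective class; hence $L\equiv 0$ for $\bar g$ as well, and Proposition \ref{prop:dim2} applied to $\bar g$ gives that $\bar g$ has constant curvature. Combining the two cases proves the corollary.

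There is no serious obstacle here, since all the analytic and computational work has already been done in the cited theorems and propositions; the only point that requires a word of care is the logical structure — we use the projective invariance to transport the \emph{vanishing} of $W$ (resp. $L$) from $g$ to $\bar g$, and then the \emph{converse} implications in Propositions \ref{prop:dimn} and \ref{prop:dim2} (the ``if $W\equiv 0$ then constant curvature'' direction) to conclude about $\bar g$. One should also note that projective equivalence is the statement that $\nabla^{\bar g}$ and $\nabla^g$ are related by \eqref{ast} for some $1$-form $\phi$, which is exactly the hypothesis under which $W$ and $L$ were shown to be unchanged, so the argument is complete and self-contained.
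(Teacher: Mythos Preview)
Your proof is correct and follows exactly the route the paper takes: the corollary is presented there as an immediate combination of Propositions \ref{prop:dimn} and \ref{prop:dim2} together with the projective invariance of $W$ (Theorem \ref{thm:weyl}) and $L$ (Theorem \ref{li}), and you have simply spelled out this combination carefully.
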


\section{Projective transformations and  Lichnerowicz-Obata conjecture. }  

\noindent{\bf Definition. }   {\it Projective transformation } of a projective structure $[\Gamma]$
is a   diffeomorphism that preserves $[\Gamma]$. 

\smallskip 
\noindent{\bf Geometric (equivalent) definition}. Projective transformations are diffeomorphisms
 that  send geodesics of $[\Gamma]$ to geodesics. In this definition we consider geodesics up to reparameterization.

 \begin{example}[Beltrami example]    \label{BE}
  We consider  the standard sphere  $S^n \subset R^{n+1}$ with the induced metric and its Levi-Civita connection.   
  Then, for every    $A\in SL(n+1)$  the   diffeomorphism  
  $$ a:S^n\to S^n,   a(x):=\frac{1}{|Ax|} Ax$$
  is a projective transformation of the sphere. 
  (In the formula above $Ax$ means multiplication of the $((n+1)\times(n+1))$-matrix  $A$ with $x\in \mathbb{R}^{n+1}$, and $|Ax|$ means the usual Euclidean length of $Ax\in \mathbb{R}^{n+1}$. Note that the length of $\frac{1}{|Ax|} Ax$ is 1 so it does lie on the sphere).\end{example}

Indeed, geodesics of  the sphere are the great circles, that are the intersections of the 2-planes containing the center of the sphere with the sphere. Since multiplication with $A$  is a linear bijection, the image of a 2-plane  containing the center of the sphere is a  2-plane containing the center of the sphere, so $a$ sends the intersection of the sphere with the first plane to the intersection of the sphere to the second plane.

Clearly, all projective transformation of a given manifold form a Lie group which we denote $\operatorname{Proj}$. It has dimension at most $(n+1)^2-1= n^2+2n$. The group of affine (i.e., connection-preserving) transformations will be denoted by $\operatorname{Aff}$, and the group of isometries is $\operatorname{Iso}$. Clearly, $\operatorname{Proj}\subseteq\operatorname{ Aff}  \subseteq \operatorname{ Iso}$, and $\operatorname{Aff}$ is a normal subgroup of $\operatorname{Proj}$ and $\operatorname{Iso}$ is a normal subgroup of  $\operatorname{Aff}$.

In this section we will discuss and give the answer to the following 

\vspace{1ex} 
\noindent{\bf Natural question.}  {\it How big can be the quotient group $\operatorname{Proj/Aff}$ for a complete Riemannian manifold $M^n$ (with $n\ge 2$)}? 

\vspace{1ex}

Beltrami example above shows that for the standard sphere the quotient  $\operatorname{Proj/Aff}$ is relatively big  and in particular contains infinitely many elements. For certain quotients of the standard sphere $\operatorname{Proj/Aff}$  also contains infinitely many elements. The next theorem, which is the main new result of the present paper,  says that on other manifolds the quotient group  $\operatorname{Proj/Aff}$  is actually finite and contains at most two elements.

\begin{theorem} \label{thm:new}  Let $(M,g)$ be a complete  Riemannian manifold of dimension  $n\ge 2$ 
such that the sectional curvature is not  a positive constant. Then, $\operatorname{Proj/Aff}$ contains at most two elements.
\end{theorem}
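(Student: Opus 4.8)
The plan is to study the action of the projective group on the space of solutions of the metrisability equation \eqref{5}. For a fixed metric $g$ whose Levi-Civita connection lies in the projective class $[\nabla]$, every element $\Phi\in\operatorname{Proj}$ pulls back $g$ to a metric $\Phi^*g$ which is again projectively equivalent to $g$ (since $\Phi$ preserves $[\nabla]$), hence gives a nondegenerate solution $\Phi^*\sigma$ of \eqref{5}. If the space of solutions of \eqref{5} were one-dimensional, then $\Phi^*\sigma=c_\Phi\sigma$ for a constant $c_\Phi$, and the map $\Phi\mapsto c_\Phi$ would be a homomorphism from $\operatorname{Proj}$ to $(\mathbb{R}_{>0},\cdot)$; one then argues (using completeness, e.g. that on a complete manifold an affine vector field preserving a metric up to constant is actually Killing, or a direct volume/recurrence argument) that this homomorphism is either trivial --- in which case every $\Phi$ preserves $g$ up to rescaling and is therefore affine (after normalizing, isometric), giving $\operatorname{Proj}=\operatorname{Aff}$ --- or has image a one-parameter group, which forces more solutions, a contradiction. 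The case distinction according to $\dim$ of the solution space of \eqref{5} (equivalently the ``degree of mobility'') is the organizing principle: for degree of mobility $1$ the quotient is trivial; the problem is the higher degrees of mobility.

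The main step is therefore: if the degree of mobility is $\ge 2$ and the curvature is not a positive constant, then still $\operatorname{Proj}/\operatorname{Aff}$ has at most two elements. Here I would use the tensor $A=A^i_j$ of \eqref{L}, built from $g$ and a projectively equivalent $\bar g$, which is a $g$-self-adjoint $(1,1)$-tensor whose eigenvalues are (by Remark \ref{formula:sinjukov}, via the Sinjukov equation \eqref{eqn:sinjukov}) tightly constrained along geodesics. The key is that $\operatorname{Proj}$ acts on the (finite-dimensional, $\ge 2$) solution space of \eqref{5}, hence on the associated family of $A$'s; the ordered tuple of distinct eigenvalues of $A$ at a point, and in particular the smallest and largest eigenvalues as functions on $M$, are moved around by this action. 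Completeness enters to show these eigenvalue functions are bounded (a BM-type argument: along a complete geodesic the eigenvalues satisfy an ODE forcing boundedness, otherwise one violates completeness or constant-positive-curvature via Beltrami, Corollary after Proposition \ref{prop:dim2}), so $\operatorname{Proj}$ acts on a compact set of data and the ``rescaling'' characters it can realize form a discrete, in fact at most order-two, group. The exclusion of a positive constant curvature is exactly what rules out the Beltrami example, where $A$ has a genuinely noncompact family of eigenvalue configurations and $\operatorname{Proj}/\operatorname{Aff}$ is infinite.

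Concretely I would proceed: (i) reduce to the case that there is a metric $g$ in the class and that $\operatorname{Proj}$ acts on the solution space $V$ of \eqref{5}, $\dim V=:d+1\ge 1$; (ii) handle $d=0$ as above to get $\operatorname{Proj}=\operatorname{Aff}$; (iii) for $d\ge 1$ use the integral \eqref{int} from Theorem \ref{thm:int} together with completeness to show that for any two metrics $g,\bar g$ in the class the eigenvalues of $A$ from \eqref{L} are globally bounded, hence attain their sup and inf; (iv) observe that $\operatorname{Proj}$ permutes the metrics in the class and hence preserves the ``extreme'' eigenvalue data up to the rescaling ambiguity, so the induced action of $\operatorname{Proj}/\operatorname{Aff}$ on this data is faithful and lands in a group of order $\le 2$ coming from the $\pm$ symmetry (reflecting, as in the uniqueness statement ``$(x,y)\mapsto(\pm x+b,\pm y+d)$'' of Theorem \ref{thm:dini}, that the only freedom left after fixing the metric and its volume is a sign/involution); (v) conclude. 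The hard part will be step (iii)--(iv): making precise, in all dimensions and without the Dini normal form, that completeness plus ``not positive constant curvature'' pins down the eigenvalue structure of $A$ rigidly enough that the projective group can only act through an involution. This is where the completeness hypothesis and the projectively invariant equations \eqref{5}, \eqref{K2} do the real work, and I expect it to require the BM-structure theory of projectively equivalent metrics (the ODE satisfied by the eigenvalues of $A$ along a geodesic) rather than anything in this excerpt alone.
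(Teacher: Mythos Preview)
Your proposal has the right starting point --- the action of $\operatorname{Proj}$ on the solution space $\operatorname{Sol}$ of \eqref{5} --- but it misses two essential ingredients of the paper's proof and replaces them by a mechanism that I do not see how to make work.

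First, the paper does \emph{not} treat all degrees of mobility $d+1\ge 2$ uniformly. It invokes an external result (Theorem \ref{thm:degree}) saying that if $\dim(\operatorname{Sol})\ne 2$ on a complete Riemannian manifold of nonconstant positive curvature, then every complete projectively equivalent metric is already affinely equivalent; this forces $\operatorname{Proj}=\operatorname{Aff}$ outright. So the entire argument reduces to the case $\dim(\operatorname{Sol})=2$, i.e.\ to a two-dimensional representation $\phi\mapsto T_\phi\in GL(2,\mathbb{R})$. Your outline treats ``$d\ge 1$'' as one block and never isolates this reduction; without it, the representation is not $2$-dimensional and none of the subsequent linear-algebra dichotomies are available.

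Second, and more seriously, your proposed source of the bound ``$\le 2$'' --- boundedness of the eigenvalues of $A$, a faithful action on ``extreme eigenvalue data'', and a $\pm$ symmetry suggested by the Dini coordinate freedom --- is not the mechanism, and I do not see how to turn it into one. In the paper the ``$2$'' comes from the sign of $\det T_\phi$: the core Proposition \ref{prop:main} shows that any $\phi$ with $\det T_\phi>0$ is a homothety (hence affine), so at most one nontrivial coset survives. Proving this requires putting $T_\phi$ into one of the three real Jordan forms \eqref{matrices}, eliminating the rotation and Jordan-block cases by positive-definiteness of iterated pullbacks, and then --- in the genuinely hard diagonal case $c\ne\bar c$ --- analyzing the asymptotics of the eigenvalues of $G_k=g^{-1}(\phi^k)^*g$, producing a convergent orbit, and feeding this into the projectively invariant tensors $W$, $L$, $\Phi$ (plus the Weyl-nullity machinery of \cite{GoverMatveev2015} and the Einstein rigidity of \cite{KiosakMatveev2009}) to force constant curvature. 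Your steps (iii)--(iv) gesture at eigenvalue control but never produce a homomorphism to a group of order $\le 2$, and the Dini $\pm$ freedom is about coordinate ambiguity, not about $\operatorname{Proj}/\operatorname{Aff}$. As written, the proposal is a plausible-sounding heuristic rather than a strategy that can be completed.
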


\begin{remark}  For closed manifolds, Theorem \ref{thm:new} was proved in \cite{Matveev2014}; as it is clearly explained there, 
essential part of the  proof is actually due to \cite{Zeghib2013},  who has proved that  $\operatorname{Proj/Aff}$ contains at most $2n$ elements.    
\end{remark}

\begin{remark} Theorem \ref{thm:new} is a stronger version of a famous conjecture due to Lichnerowciz and Obata: they conjected  that  a connected group of projective transformations on a closed (Obata) or complete (Lichnerowicz) Riemannain 
manifold of nonconstant curvature consists of affine transformations. This conjecture was proved in \cite{Matveev2005} for dimension 2 and in \cite{Matveev2007} for other dimensions, the latter reference contains also a description of the history of the problem including a list of previous results in this direction. 
\end{remark}

\begin{remark} Theorem \ref{thm:new} is sharp: 
there exist examples (see e.g. \cite{Matveev2014}) of closed and complete manifolds such that $\operatorname{Proj/Aff}$ contains precisely  two elements, and its natural generalisations 
 are  not true locally (see e.g. \cite{BryantMannoMatveev2008, Matveev2012b, KruglikovMatveev2014}. It is not clear though whether the assumption that the metric is positive definite is important: we do not have counterexamples and  recently projective Obata conjecture (on closed manifolds and with connected groups) was proved for metrics of Lorentzian signature  in \cite{Matveev2012c} for dimension 2 and in  \cite{BolsinovMatveevRosemann2015} for all dimensions. 
\end{remark}

 Theorem \ref{thm:new} will be proved in the next two sections.

\subsection{ Space of solutions of the metrisability equation and how projective transformations  act on it.} In this and in the following section 
we assume that $g$ is a complete   Riemannian metric on a connected $M^n$ of dimension $n\ge 2$; we assume that the sectional curvature of $g$ is  not a positive constant   and our goal is to show  that the number of elements in the quotient group $\operatorname{Proj/Aff}$ is at most two.

We consider  the metrisability  equation \eqref{5}. 
Let $\operatorname{Sol}$ be the space of its solutions; since the equation is linear, it is a linear vector space. It has a  finite dimension.  

The following theorem, which was proved in \cite[\S4 ]{Matveev2005} for  dimension 2 and in \cite[Theorem 1]{KiosakMatveev2010}  for dimensions  $\ge 3$ 
(in fact, in the Riemannian case,  \cite[Theorem 2]{Matveev2006a} or \cite[Theorem 16]{Matveev2007}  are sufficient), plays  a crucial role.

\begin{theorem} \label{thm:degree} 
Let $g$ be a complete Riemannian metric  on a connected  $M^{n}$ of dimension  $n\ge 2$.  Assume that $g$ does not have constant positive sectional curvature. 

If the dimension of $\operatorname{Sol}$ is not equal to  $2$, then  every complete metric $\bar g$ projectively  equivalent to $g$ is affine equivalent to $g$. 
\end{theorem}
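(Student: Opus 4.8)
The plan is to exploit the dictionary between solutions of the metrisability equation and metrics in the projective class, together with the integral from Theorem \ref{thm:int}, to reduce the statement to a rigidity fact about the endomorphism $A$ from \eqref{L}. First I would recall the setup: given two complete projectively equivalent Riemannian metrics $g$ and $\bar g$, both give rise to nondegenerate solutions $\sigma,\bar\sigma$ of \eqref{5}, and the $(1,1)$-tensor $A^i_j$ defined in \eqref{L} is $g$-selfadjoint and positive definite. The key analytic input is the well-known fact (which in the Riemannian case follows from \cite{Matveev2006a} or \cite{Matveev2007}, as indicated in the excerpt) that if $g$ is a complete Riemannian metric which is not of constant positive sectional curvature, then the ordered eigenvalues of $A$ are globally controlled; more precisely, all but possibly one of them are forced to be locally constant, so that the span of $\{g,\bar g\}$ inside $\operatorname{Sol}$ — equivalently, the dimension of $\operatorname{Sol}$ restricted to the subspace generated by these two metrics — is at most $2$ unless $A$ degenerates to a multiple of the identity, in which case $\bar g$ is affinely (indeed homothetically) equivalent to $g$ by Remark \ref{formula:sinjukov}.

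The main steps, in order, would be: (1) pass from $\bar g$ to the corresponding solution $\bar\sigma\in\operatorname{Sol}$, and to the tensor $A$ given by \eqref{L}; record that $A$ is $g$-selfadjoint, positive, and that its behaviour is governed by the Sinjukov-type equation \eqref{eqn:sinjukov} for $a=\sigma\otimes(\operatorname{Vol}_g)^{2/(n+1)}$; (2) invoke the completeness of both $g$ and $\bar g$ to control the eigenvalues of $A$ at infinity — this is where the hypothesis ``not constant positive sectional curvature'' enters, ruling out the Beltrami-sphere behaviour where eigenvalues genuinely vary; (3) conclude that if $\dim\operatorname{Sol}\ne 2$, then the number of functionally independent eigenvalues of $A$ is forced to be $0$ (a single nonconstant eigenvalue would, via the integrals it generates together with the structure of $\operatorname{Sol}$, push $\dim\operatorname{Sol}$ to exactly $2$), so all eigenvalues of $A$ are constant; (4) apply Remark \ref{formula:sinjukov}: constancy of the eigenvalues of $A$ forces $\lambda^i=0$, hence $\sigma$ (equivalently $a$) is parallel with respect to $\nabla^g$, which means $\bar g$ is affinely equivalent to $g$.

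The hard part will be step (2)–(3): extracting, from completeness alone, enough rigidity on the eigenvalues of $A$ to pin the dimension count, and in particular justifying the trichotomy ``either some eigenvalue is nonconstant and then $\dim\operatorname{Sol}=2$ exactly, or all eigenvalues are constant.'' This is precisely the content imported from \cite{Matveev2005, KiosakMatveev2010, Matveev2006a, Matveev2007}, and a self-contained treatment would require reproducing the ODE analysis along geodesics (how the eigenvalues of $A$ evolve along a $g$-geodesic, forcing them to be bounded and eventually constant on a complete manifold) that underlies those references; I would cite those results rather than reprove them, and spend the remaining effort only on the clean linear-algebra reduction in steps (1) and (4), which follow directly from the projectively invariant machinery already set up in the excerpt.
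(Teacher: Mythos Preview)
The paper does not prove Theorem \ref{thm:degree} at all: immediately after stating it, the author writes ``We do not give or explain the proof of this theorem. It is pretty involved and is based on another group of methods than that used in this paper; moreover, the proofs in dimensions $n=2$ and $n\ge 3$ are very different.'' The theorem is simply imported from \cite{Matveev2005} (for $n=2$) and \cite{KiosakMatveev2010}, \cite{Matveev2006a}, \cite{Matveev2007} (for $n\ge 3$). Since your proposal also explicitly defers the hard content to these same references, you and the paper are in agreement on the essential point: this result is quoted, not proved here.

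That said, the extra scaffolding you add in step~(3) contains a genuine gap. You assert that ``a single nonconstant eigenvalue would, via the integrals it generates together with the structure of $\operatorname{Sol}$, push $\dim\operatorname{Sol}$ to exactly $2$.'' But a nonconstant eigenvalue of $A$ (built from one particular pair $g,\bar g$) only shows that $\sigma$ and $\bar\sigma$ are linearly independent, i.e.\ $\dim\operatorname{Sol}\ge 2$; nothing in the integral machinery of \S\ref{intmul} bounds $\dim\operatorname{Sol}$ from above. The case $\dim\operatorname{Sol}=1$ is indeed trivial (then $\bar\sigma=c\sigma$ and $A=c\cdot\mathrm{id}$), but the case $\dim\operatorname{Sol}\ge 3$ is exactly where the difficulty lies, and the cited papers do \emph{not} proceed by showing ``eigenvalues of $A$ constant'' for a single $\bar g$. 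Rather, they prolong the metrisability system using a third independent solution, obtain an extended closed system of PDEs (involving an auxiliary constant $B$), and then use completeness together with an ODE analysis along geodesics to force either constant positive sectional curvature or affine rigidity of every solution. This is precisely the ``another group of methods'' the paper alludes to, and it is not reducible to the eigenvalue-counting heuristic you sketch. Your steps (1) and (4) are fine --- the equivalence ``all eigenvalues of $A$ constant $\Leftrightarrow$ $\bar g$ affine equivalent to $g$'' is exactly Remark~\ref{formula:sinjukov} --- but they do not lighten the load carried by the citations.
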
  
 
 We do  not give or explain  the proof of this theorem. It is pretty involved and is based on another group of methods than that used in this paper; moreover, the proofs in dimensions $n= 2$   and $n\ge 3$ are very different.

By Theorem  \ref{thm:degree}, in the proof of Theorem \ref{thm:new},  we may assume without loss of generality that $\textrm{dim}(\operatorname{Sol})= 2$.

  Consider now a projective transformation  $\phi \in \operatorname{Proj} $. 
  Since the equation  \eqref{5} is projectively invariant, $\phi$ sends solutions of the metrisability  equation to solutions.
  
  \begin{remark} In order to construct weighted tensor bundles, we normally need to fix an orientation of the manifold.  Though our projective transformation are not assumed to be  orientation-preserving,  
   since  solutions of the metrisability  equation  have  the even projective weight $(-2)$, they do not depend on the orientation at all and no problem appears.
\end{remark}

  Take a basis $\sigma, \bar \sigma $ in $\operatorname{Sol}$ and consider the pullbacks $\phi^*\sigma$, $\phi^*\bar \sigma$. They also belong to $\operatorname{Sol}$ and are therefore linear combinations of the basis solutions  $\sigma$ and $\bar \sigma$; we denote the coefficients as below: 
$$
\begin{pmatrix} \phi^*\sigma \\ \phi^*\bar \sigma\end{pmatrix} = \begin{pmatrix} a & b \\ c & d \end{pmatrix} \begin{pmatrix} \sigma\\ \bar \sigma\end{pmatrix} = \begin{pmatrix} a\sigma  &+ & b\bar \sigma  \\ c \sigma  &+& d\bar \sigma \end{pmatrix}.
$$ 
We denote the matrix $\begin{pmatrix} a & b \\ c & d \end{pmatrix}$ above   by $T_\phi$.  
The  mapping  from $\operatorname{Proj}$ to $GL(2, \mathbb{R})$ given by $\phi\mapsto T_\phi$ 
is actually a 2-dimensional  representation of $\operatorname{Proj}$, since 
the composition  $\psi\circ \phi$ of two projective transformations corresponds to the product of matrices $A_\psi $ and $A_\phi$ is the reverse order:
$$
\psi\circ \phi \mapsto T_\phi T_\psi. 
$$

 It is easy to see that  if a metric $g$ from the projective class corresponds  to $\tilde \sigma \in \operatorname{Sol}$ and  if $\tilde \sigma$ is an eigenvector of $\phi^*$, then $\phi$ is a homothety  for $g$.  As a consequence we  have that if $T_\phi= \operatorname{id}$    then $\phi$ is an  isometry w.r.t. any metric in the projective class.

As we explain below,  Theorem \ref{thm:new}  immediately follows from the next  proposition:

\begin{proposition} \label{prop:main} We assume that $g$ is a complete   Riemannian metric on a connected $M^n$ of dimension $n\ge 2$ such   that its  sectional curvature of $g$ is  not a positive constant and such that it admits a  complete metric which is projectively equivalent to $g$ and which is not affine equivalent to $g$.  Then, any  projective transformation $\phi$ such that 
 the determinant of 
 $T_\phi$ is positive  is a homothety of $g$. 
\end{proposition}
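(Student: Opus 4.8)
The plan is to make the $2$-dimensional representation $\phi\mapsto T_\phi$ of $\operatorname{Proj}$ on $\operatorname{Sol}$ do all the work. By Theorem~\ref{thm:degree}, the existence of a complete metric which is projectively but not affinely equivalent to $g$ forces $\dim\operatorname{Sol}=2$; fix a basis of $\operatorname{Sol}$ consisting of the solution $\sigma$ of the metrisability equation~\eqref{5} coming from $g$ and the solution $\bar\sigma$ coming from $\bar g$. Then $\phi^*$ acts linearly on $\operatorname{Sol}\cong\mathbb R^2$ with $\det(\phi^*|_{\operatorname{Sol}})=\det T_\phi>0$, and by the observation preceding the statement it suffices to prove that $\sigma$ is an eigenvector of $\phi^*$ (then $\phi$ is a homothety of $g$).

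First I would describe the dynamics of $\phi^*$ on the projective line $\mathbb P(\operatorname{Sol})\cong\mathbb{RP}^1$. Let $\mathcal C\subset\operatorname{Sol}$ be the cone of solutions that are positive semi-definite at every point of $M$. It is closed, convex and \emph{pointed}, since a solution and its negative cannot both be positive semi-definite; it is $2$-dimensional because $\sigma$ and $\bar\sigma$ are positive definite, so $\mathbb P(\mathcal C)$ is a non-degenerate arc of $\mathbb{RP}^1$; and $\mathbb P(\mathcal C)\neq\mathbb{RP}^1$, because the tensor $A=\bar\sigma\sigma^{-1}$ of~\eqref{eq:18} is not a multiple of the identity (otherwise, by Remark~\ref{formula:sinjukov}, $\sigma$ would be parallel and $\bar g$ affine equivalent to $g$), so some combination $\sigma-t\bar\sigma$ degenerates at a point. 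Moreover, completeness of $g$ and $\bar g$ keeps the eigenvalues of $A$ in a compact subset of $(0,\infty)$, so $[\sigma]$ and $[\bar\sigma]$ are \emph{interior} points of $\mathbb P(\mathcal C)$. Finally, $\phi^*$ preserves $\operatorname{Sol}$ (projective invariance of~\eqref{5}) and sends positive-definite solutions to positive-definite solutions (since $\phi^*h$ is Riemannian for every metric $h$ in the class), hence it preserves $\mathcal C$ and the arc $\mathbb P(\mathcal C)=[a,b]$.

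Now $\det T_\phi>0$ enters: the homeomorphism of $\mathbb{RP}^1$ induced by $\phi^*$ is \emph{orientation-preserving}, and since it maps the proper arc $[a,b]$ onto itself it must fix both endpoints $a$ and $b$. An orientation-preserving element of $GL^+(2,\mathbb R)$ with two distinct fixed points on $\mathbb{RP}^1$ is either scalar or hyperbolic (an elliptic element has no invariant proper sub-arc, a parabolic one none with two distinct endpoints). If $\phi^*$ is scalar on $\operatorname{Sol}$ then $\sigma$ is an eigenvector and we are done; the whole difficulty is to rule out the hyperbolic case, where the two eigendirections of $\phi^*$ are exactly the endpoints $a,b$, i.e.\ degenerate, non-metric solutions, while $[\sigma]$ — an interior point of $[a,b]$ — is not an eigendirection.

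I expect this exclusion to be the main obstacle, and it is where completeness is used decisively. The plan is to iterate $\phi$: expanding $\sigma$ in the eigenbasis of $\phi^*$ and dividing by the dominant eigenvalue, the solutions $(\phi^k)^*\sigma$ converge in $C^\infty_{\mathrm{loc}}$ to one of the degenerate boundary solutions $\tilde\sigma_b$. On the nonempty open set where $\tilde\sigma_b$ is non-degenerate, suitable constant rescalings of the metrics $(\phi^k)^*g$ — all complete and all isometric to $g$ — converge to the metric of $\tilde\sigma_b$; depending on whether the dominant eigenvalue is $>1$ or $<1$, one gets either curvatures tending to infinity along a $C^\infty_{\mathrm{loc}}$-convergent sequence of metrics, which is impossible, or — via a Cheeger--Gromov type compactness argument applied to blow-ups of the fixed complete manifold $(M,g)$ — that this limit is flat, so that $g$ is projectively equivalent to a flat metric on a dense open set. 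By the Beltrami theorem (\S\ref{sec:4}) $g$ would then have constant sectional curvature, forcing $\dim\operatorname{Sol}>2$ and contradicting the reduction. Hence $\phi^*$ is scalar and $\phi$ is a homothety of $g$. The bookkeeping needed to run this iteration argument cleanly — the degeneracy loci of the boundary solutions, and the behaviour at infinity of the rescaled metrics — is the part I expect to require real work; the earlier machinery (Theorem~\ref{thm:degree}, the tensor $A$, Remark~\ref{formula:sinjukov}, and the integral~\eqref{int}, which is what controls the eigenvalues of $A$) is precisely what feeds into these steps.
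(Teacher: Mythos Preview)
Your cone/arc framework is a clean way to organize the casework, and your disposal of the elliptic and parabolic possibilities is correct --- arguably tidier than the paper's direct iteration in \eqref{matrices}. The substance of the proposition, however, is entirely the hyperbolic case, and there your proposal has a real gap.

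The problem is the passage ``depending on whether the dominant eigenvalue is $>1$ or $<1$ \dots\ the limit is flat''. From convergence in the finite-dimensional space $\operatorname{Sol}$ you do get $C^\infty_{\mathrm{loc}}$-convergence of suitably rescaled metrics $c_k(\phi^k)^*g$ to the metric $h$ attached to the boundary solution on the set where that solution is nondegenerate. But the sectional curvature of $c_k(\phi^k)^*g$ at $p$ equals $c_k^{-1}$ times a sectional curvature of $g$ at $\phi^k(p)$, and you have no control on the latter: completeness gives no curvature bound, and you do not know where the orbit $\phi^k(p)$ goes. So neither branch (``curvature blows up'' / ``limit is flat'') is forced; all that follows from $C^\infty_{\mathrm{loc}}$-convergence is that the limit curvature exists, not that it is zero. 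Invoking Cheeger--Gromov does not help, since you have neither uniform curvature nor injectivity-radius bounds, and a pointed limit of rescalings of a fixed complete manifold need not be flat. (A smaller but related issue: the assertion that completeness of $g$ and $\bar g$ confines the eigenvalues of $A$ to a compact subset of $(0,\infty)$ also needs an argument; in the paper's analysis the analogous eigenvalues \emph{do} reach the boundary values on the sets $M_0$, $M_1$.)

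What the paper actually does in this case is quite different and considerably more intricate. It first pins down the eigenvalue structure of the tensor $A$ built from an eigenvector of $\phi^*$: a careful asymptotic analysis of $G_k=g^{-1}(\phi^k)^*g$ (see \eqref{as}) is played against the \emph{projectively invariant tensors} of \S\ref{sec:4} --- the Weyl tensor $W$ in dimension $\ge 3$, the Liouville tensor $L$ in dimension $2$, and finally the tensor $\Phi_{ij}$ of \cite{GoverMatveev2015} --- to force $A$ to have exactly one nonconstant eigenvalue and the multiplicities to satisfy a rigid relation (Proposition~\ref{lemma3}). In the remaining configuration it uses the Levi-Civita normal form (Theorem~\ref{LCtheorem}), the geometry of the totally geodesic level sets $M_0,M_1$ (Proposition~\ref{twocomponents}), and a correction of $\phi$ by an isometry so that the iterates converge; the same invariant-tensor trick then shows the Weyl tensor has a nullity direction, and results of \cite{GoverMatveev2015} together with the Einstein rigidity of \cite{KiosakMatveev2009} finish the argument. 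The mechanism is precisely the tension between the exponential asymptotics \eqref{as} and the projective invariance of $W$, $L$, $\Phi$ --- a bounded invariant contracted against exponentially growing metric coefficients must vanish. That is the missing ingredient in your sketch, and a soft compactness argument does not supply it.
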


Proposition \ref{prop:main} clearly implies Theorem \ref{thm:new}. Indeed, by Proposition \ref{prop:main} for two 
 nonhomothetic   projective transformations $\phi$ and $\psi$     there superposition is 
   a homothety and hence an affine transformation, since the product of two matrices $T_\psi$  and $T_\phi$
  with negative determinants has positive determinant. Thus, product of two arbitrary elements of the quotient group $\operatorname{Proj/Aff}$ is a  trivial element, and the number of elements in $\operatorname{Proj/Aff}$ is at most two.

We will start the proof of   Proposition  \ref{prop:main} now, explain  the scheme and  prove  two simple cases in this section.  The most involving case  will require preliminary work and will be proved in the next section. 

First of all,  suppose for a  nonhomothetic projective transformation  $\phi$ the determinant of $T_\phi$ is positive. Then, by  a choice of a  basis $\sigma, \bar \sigma$ in $\operatorname{Sol}$ we achieve  that $\phi^*$  is as in one of the three cases below:  
\begin{equation} \label{matrices}
  {\left[\begin{array}{cccr}\phi^* \sigma &=&  c\, \sigma &     \\
                  \phi^* \bar \sigma & = & & \bar c \, \bar \sigma\end{array}\right] }  \hspace{3ex} {    
                   \left[ \begin{array}{ccr}\phi^* \sigma &=  C \cos(\alpha)\,   \sigma & - C \sin(\alpha)\, \bar \sigma   \\
                  \phi^* \bar \sigma & =  C  \sin(\alpha)\,    \sigma & + C \cos(\alpha)\,  \bar \sigma\end{array}  \right] } 
                  \hspace{3ex} \left[ \begin{array}{ccr} \phi^* \sigma &= & \lambda \, \sigma  +  \bar \sigma   \\
                  \phi^*\bar \sigma & = & \lambda \, \bar  \sigma\end{array}\right]. 
\end{equation} 
The parameters $c, \bar c, \lambda, C$ above     are real numbers different from zero such that    $ c \, \bar c>0$; we  may  assume that  $C>0$.

In this section we show 
 that the second and third cases   of \eqref{matrices} are impossible. We first consider the  second case,  it  is slightly more complicated than the third one. 
 The most complicated case is actually the first one, we will consider it in the next section.

Consider the superposition $\phi^m= \underbrace{\phi \circ ....\circ \phi}_{m}$. It is a projective transformation and the corresponding matrix is simply the $m^{\textrm{th}}$ power of the matrix  $C \begin{pmatrix} \cos \alpha & -\sin\alpha   \\ \sin\alpha & \cos \alpha \end{pmatrix} $, i.e., the matrix 
$$C^m \begin{pmatrix} \cos (m \alpha) & -\sin(m\alpha)   \\ \sin(m\alpha) & \cos (m\alpha) \end{pmatrix}.  $$

Suppose  the metric $g$ corresponds, via the formula \eqref{eq:sigma}, to a linear combination $\tilde \sigma= a\sigma + b \bar \sigma$.
Let us now show   
that, unless $\alpha\ne 2 \pi N$  for an integer $N$, there exists $m$ such that  ${\phi^{m}}^*( \tilde \sigma)$ is not positive definite. 

 We first assume that  $\tfrac{\alpha}{2\pi}$ is irrational. Then for a certain   $m$ the matrix    
$\begin{pmatrix} \cos (m \alpha) & -\sin(m\alpha)   \\ \sin(m\alpha) & \cos (m\alpha) \end{pmatrix}$ is very close  to  the matrix 
$-\operatorname{id}$. Indeed, the matrices $\begin{pmatrix} \cos (m \alpha) & -\sin(m\alpha)   \\ \sin(m\alpha) & \cos (m\alpha) \end{pmatrix}$ correspond to $m\alpha$-rotation around the zero points, so the points of the form 
 $\begin{pmatrix} \cos (m \alpha) & -\sin(m\alpha)   \\ \sin(m\alpha) & \cos (m\alpha) \end{pmatrix}$ generate an everywhere dense subset in the group $SO(2)$, which implies that there exists $m$ such that the matrix $ \begin{pmatrix} \cos (m \alpha) & -\sin(m\alpha)   \\ \sin(m\alpha) & \cos (m\alpha) \end{pmatrix}$ is very close to $-\operatorname{id}$; for this matrix the pullback of  $\tilde \sigma$ which is  $-C^m \tilde \sigma$   is  negative definite.  But this is impossible since this would imply that the pullback of a positive definite $g$ is negative definite. The contradiction shows that this case is impossible.

 Suppose now  $\tfrac{\alpha}{2\pi}$ is  rational, but not  integer: $\alpha= 2\pi \tfrac{p}{q}$ with $q\ge 2$.   Then, the sum of solutions 
 $\tilde \sigma + \tfrac{1}{C} \phi^* (\tilde \sigma) + ... + \tfrac{1}{C^{q-1}}  {\phi^{q-1}}^*( \tilde \sigma)$ is zero, because the solutions 
 $\tfrac{1}{C^{m}}  {\phi^{m}}^*( \tilde \sigma)$  are vertices of the regular $q$-gone in the place $\operatorname{Sol}$, and 
 the sum of the vertices of a regular polygon with center at  origin is zero.  But then $\tilde \sigma$ can not be positive definite since a sum of positive definite matrices is positive definite as well and can not be zero. Thus also this case is impossible. 
 
 Finally, $\alpha= 2 \pi N$, so our $A_\phi= \begin{pmatrix} C & \\ & C \end{pmatrix}$; hence $\phi$ is a homothety for any metric in the projective class and therefore    for $g$.   
Thus, the second case of \eqref{matrices}
 is impossible. 
 
Similarly, one can  show that the third matrix of  \eqref{matrices} is impossible. Indeed, 
in this case   the superposition $\phi^m$ corresponds to the matrix 
$$
 \begin{pmatrix} \lambda & 1   \\ 0 & \lambda \end{pmatrix}^m = \begin{pmatrix} \lambda^m &  m\lambda^{m-1}   \\ 0 & \lambda^m \end{pmatrix}.   
$$
It is easy to see that unless $\tilde \sigma$ is an eigenvector of $\phi^*$ (which implies that $\phi$ is a homothety, we explained this above), 
for big $m$ the solution $\tfrac{1}{\lambda^m}{\phi^{m}}^*( \tilde \sigma)$  is close to  $\tfrac{m}{\lambda}  \bar \sigma$,   the solution ${\lambda^m}{\phi^{-m}}^*( \tilde \sigma)$  is close to  $\tfrac{-m}{\lambda}  \bar \sigma$,   and it is not possible that both of them are positive definite.

 Thus, below we can assume that $\phi$ is as in the first case of   \eqref{matrices}. Note that the case  $c=\bar c$, corresponds to 
   homothety, which is of course an affine transformation;  we may  assume therefore  $ c \ne \bar c $.

\section{Proof  of Proposition \ref{prop:main} in  the remaining case, and hence of Theorem \ref{thm:new} }

\subsection{Integrals in the multidimensional case and complete manifolds such that $A$ has two constant and one nonconstant eigenvalue.}  \label{intmul} We have seen in Theorem \ref{thm:int} that  the existence of $\bar g$ projectively equivalent to $g$ allows one  to construct an integral of the geodesic flow of $g$. Since the metrisability  equation is linear, if we have one metric that is projectively equivalent to $g$ and nonproportional to $g$, 
then, at least locally, 
 we have a two-parameter 
  family   $C \cdot g_s$ of metrics projectively equivalent to $g$; so we have a 1-parameter family of the integrals. Direct calculations give us the following theorem:

\begin{theorem}[\cite{BolsinovMatveev2003,MatveevTopalov1998,TopalovMatveev2003}] Let $g$   be a metric of arbitrary signature. 
    Consider the (1,1)-tensor  $A$ given by  \eqref{L} (assuming that $\bar g$ is projectively equivalent to $g$) or by \eqref{eq:18} (assuming that $\sigma$ is the solution of the metrisability  equation corresponding to $g$ and   $\bar \sigma $ is also a solution of the metrisability  equation). 
    
Then for any $t\in \mathbb{R}$, the function

\begin{equation} \label{integral} I_t:TM\to \mathbb{R},  \quad  I_t(\xi)=  g(\operatorname{co}(t\cdot \operatorname{id}-A) \xi, \xi), 
  \end{equation}where $\operatorname{co}$ denotes  the comatrix (which is this case is a $(1,1)$-tensor),  is an integral of the geodesic flow of the metric $g$.
   \end{theorem}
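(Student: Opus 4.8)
The plan is to reduce the statement to the fact that $I(\xi) = \left|\tfrac{\det g}{\det \bar g}\right|^{\frac{2}{n+1}}\bar g(\xi,\xi)$ is an integral, i.e. to Theorem \ref{thm:int}, and then to use the linearity of the metrisability equation to produce a whole family of such integrals. Concretely, if $\sigma$ and $\bar\sigma$ are both solutions of \eqref{5}, then for every $t$ the weighted tensor $t\sigma - \bar\sigma$ is again a solution; when it is nondegenerate it corresponds, via \eqref{eq:sigma}, to a metric $\bar g_t$ that is projectively equivalent to $g$. Applying Theorem \ref{thm:int} to the pair $(g,\bar g_t)$ gives an integral of the geodesic flow of $g$ for each such $t$. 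The main work is then to rewrite this $t$-dependent integral in the closed form \eqref{integral} involving the comatrix $\operatorname{co}(t\cdot\operatorname{id}-A)$.

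First I would set up the dictionary between the solutions $\sigma,\bar\sigma$ of \eqref{5}, the tensor $A$ of \eqref{eq:18}, and the metrics. Writing $\sigma$ for the solution attached to $g$ and using $A = \bar\sigma\sigma^{-1}$ as in \eqref{eq:18}, the combination $t\sigma - \bar\sigma = (t\cdot\operatorname{id} - A)\sigma$ is the solution attached (up to the usual normalisation \eqref{eq:sigma}) to the metric whose inverse is proportional to $(t\cdot\operatorname{id}-A)g^{-1}$; equivalently the metric $g_t$ itself satisfies $g_t^{-1} = |\det(t\cdot\operatorname{id}-A)|\,(t\cdot\operatorname{id}-A)g^{-1}$ up to the weight factor. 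Taking determinants of this relation yields $\tfrac{\det g}{\det g_t}$ as an explicit power of $\det(t\cdot\operatorname{id}-A)$, and then the factor $\left|\tfrac{\det g}{\det g_t}\right|^{\frac{2}{n+1}}$ in Theorem \ref{thm:int} combines with $g_t(\xi,\xi)$ precisely to cancel the awkward powers and leave $g$ contracted against $\det(t\cdot\operatorname{id}-A)(t\cdot\operatorname{id}-A)^{-1} = \operatorname{co}(t\cdot\operatorname{id}-A)$. This is where the comatrix, rather than the inverse, enters: it is exactly $\det(t\cdot\operatorname{id}-A)$ times the inverse, so the formula \eqref{integral} makes sense and stays polynomial in $t$ and $A$ even where $t\cdot\operatorname{id}-A$ is degenerate. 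The bookkeeping of the $\frac{1}{n+1}$ and $\frac{2}{n+1}$ exponents is the one genuinely fiddly point, but it is a finite determinant computation.

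Having verified the identity $I_t(\xi) = g(\operatorname{co}(t\cdot\operatorname{id}-A)\xi,\xi)$ for all $t$ in the open dense set where $t\cdot\operatorname{id}-A$ is nondegenerate and $g_t$ is a genuine (Riemannian or pseudo-Riemannian) metric, I would finish by a continuity/polynomiality argument: both sides of \eqref{integral} are polynomial in $t$ (of degree $\le n$) with coefficients that are smooth functions on $TM$, and the assertion ``$I_t$ is an integral'' means $\{I_t, \text{energy}\} = 0$, which is itself polynomial in $t$; since it vanishes for all $t$ in a dense set it vanishes identically, so $I_t$ is an integral for every $t\in\mathbb{R}$, including the degenerate values and the case of indefinite signature where no metric $g_t$ exists. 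This also removes the need for $A$ to be diagonalisable or to have simple spectrum.

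The hard part will be the determinant/weight computation in the second step — getting the power of $\det(t\cdot\operatorname{id}-A)$ right so that the prefactor of Theorem \ref{thm:int} collapses to exactly one copy of $\det(t\cdot\operatorname{id}-A)$ — together with being careful that the identification \eqref{eq:sigma} between solutions and metrics is used consistently (the solution $\sigma$ carries projective weight, the metric does not, and $A$ is an honest $(1,1)$-tensor precisely because the weights of $\bar\sigma$ and $\sigma^{-1}$ cancel, as noted after \eqref{zvezda}). Everything else — linearity of \eqref{5}, existence of the one-parameter family of projectively equivalent metrics, and the polynomial-in-$t$ continuation — is routine once the algebra of this step is in place.
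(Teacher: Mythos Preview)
Your proposal is correct and matches the paper's own approach exactly: the paper does not give a detailed proof of this theorem but simply notes, in the paragraph preceding it, that linearity of the metrisability equation yields a one-parameter family of projectively equivalent metrics $g_t$, hence a one-parameter family of integrals via Theorem~\ref{thm:int}, and that ``direct calculations'' then produce the closed form~\eqref{integral}. Your outline is precisely a fleshing-out of those direct calculations, including the polynomiality argument needed to pass through the degenerate values of $t$.
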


Clearly, the entries of the comatrix are polynomial expressions of order $n-1$ in the entries of the initial matrix, so the family of integrals  $I_t$ is actually a polynomial in $t$  of degree $n-1$, whose coefficients are integrals.

\begin{corollary}[\cite{GoverMatveev2015},Corollary 5.7] The minimal polynomial of $A$ has the same degree at each point of an open
everywhere dense subset of $M.$ \label{gover}
\end{corollary}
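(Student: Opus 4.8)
The plan is to split the assertion into an easy openness part and a harder density part. Write $r(x)$ for the degree of the minimal polynomial of $A(x)$. First I would record the linear-algebra fact that for an operator $B$ on an $n$-dimensional space this degree equals the dimension of the subalgebra $\mathbb{R}[B]=\operatorname{span}\{\operatorname{Id},B,\dots,B^{n-1}\}$ (one uses that the minimal polynomial has degree $\le n$, so higher powers add nothing). Hence $r(x)$ is the rank, at $x$, of the smooth bundle homomorphism from the trivial bundle $M\times\mathbb{R}^{n}$ to $\operatorname{End}(TM)$ sending $(c_0,\dots,c_{n-1})$ to $\sum_{j=0}^{n-1}c_j A(x)^j$. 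Since the rank of a continuous family of linear maps is lower semicontinuous and $1\le r\le n$, the maximum $d:=\max_{M}r$ is attained and $M_0:=\{x:r(x)=d\}=\{x:r(x)\ge d\}$ is open and nonempty. The whole statement then reduces to: $M_0$ is dense, i.e. the locus $\{r<d\}$, which is the common zero set of the $(d\times d)$-minors of that bundle homomorphism, has empty interior.

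For the density I would use the special nature of $A$: its entries are algebraic expressions in the components of two solutions $\sigma,\bar\sigma$ of the metrisability equation \eqref{5} (cf. \eqref{L}, \eqref{eq:18}), and such solutions are controlled by the standard prolongation of \eqref{5}, which realizes $\operatorname{Sol}$ as the parallel sections of a linear connection $D$ on an auxiliary vector bundle $\mathcal{E}\to M$. I see two ways to finish. In the real-analytic category, $D$ is analytic, so $\sigma,\bar\sigma$, and therefore $A$ and the $(d\times d)$-minors above, are real-analytic; on each connected chart the common zero set of these minors is then either the whole chart or nowhere dense, and it cannot be the whole chart because $r=d$ somewhere, so $M_0$ is dense. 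In the merely smooth category, I would instead exploit that a metric in the projective class makes $\mathcal{E}$ carry a $D$-parallel nondegenerate bilinear form; contracting the $D$-parallel lifts of $\sigma$ and $\bar\sigma$ with it yields a $D$-parallel endomorphism $\mathbb{A}$ of $\mathcal{E}$. Because parallel transport conjugates $\mathbb{A}(x)$ to $\mathbb{A}(y)$ for any $x,y$, the algebraic type of $\mathbb{A}$ — in particular the degree of its minimal polynomial — is genuinely constant on the connected manifold $M$; one then transfers this to $A$ by comparing $\mathbb{A}$ with $A$ on the open dense set where the relevant tractor splitting is nondegenerate.

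The hard part will be this last transfer step in the smooth setting: showing precisely how the globally constant algebraic type of the tractor-level operator $\mathbb{A}$ forces $r$ to be constant off a nowhere-dense ``degeneration'' set, which requires identifying that set and checking it is nowhere dense, together with tracking the fixed, harmless contribution of the extra tractor directions to the minimal polynomial of $\mathbb{A}$ relative to that of $A$. By contrast the reduction of $r$ to a rank, the resulting openness of $M_0$, and — in the analytic case — the density of $M_0$ are routine.
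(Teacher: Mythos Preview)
The paper does not actually prove this corollary: immediately after stating it, it says ``We will not prove this corollary here'' and refers to \cite{GoverMatveev2015} (and, for the Riemannian case, to \cite{BolsinovMatveev2003,TopalovMatveev2003}), adding only that the argument ``uses the same ideas as the proof of Theorem~\ref{thm:5}, namely the special form of the integrals\dots and the condition that the integrals are constant on geodesics.'' So the benchmark you are being compared against is a geodesic-flow argument: one exploits that the family $I_t(\xi)=g(\operatorname{co}(t\cdot\operatorname{id}-A)\xi,\xi)$ from \eqref{integral} is, for each fixed geodesic, a \emph{fixed} polynomial in $t$, and reads off the number of distinct eigenvalues of $A$ (equivalently, since $A$ is $g$-self-adjoint, the degree of its minimal polynomial) from the root structure of $I_t$ along that geodesic. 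This propagates the eigenvalue pattern along geodesics and yields density directly, with no analyticity hypothesis and no tractor machinery.

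Your route is genuinely different. The openness half via lower semicontinuity of rank is correct and routine, and the real-analytic density argument is fine but does not cover the smooth statement being claimed. For the smooth case your tractor idea is in the right spirit, but the transfer step you flag as ``the hard part'' is not yet a proof, and there is a concrete mismatch in your sketch. The parallel endomorphism you want does not naturally live on the prolongation bundle $\mathcal{E}$: solutions of \eqref{5} correspond to parallel sections of $S^2\mathcal{T}$ for the rank-$(n{+}1)$ standard tractor bundle $\mathcal{T}$, a nondegenerate one gives a parallel metric $H$ on $\mathcal{T}$, and the natural object is $\mathbb{A}=H^{-1}\bar H\in\operatorname{End}(\mathcal{T})$, not an endomorphism of $\mathcal{E}$. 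Moreover, in the splitting of $\mathcal{T}$ determined by $g$ the operator $\mathbb{A}$ is \emph{not} block-diagonal over $TM$ --- the off-diagonal slot is exactly the vector $\lambda^i$ of Remark~\ref{formula:sinjukov} --- so $A$ is not the restriction of $\mathbb{A}$ to an $\mathbb{A}$-invariant subbundle, and the minimal polynomial of $A$ is not an obvious factor of that of $\mathbb{A}$. This can be made to work (the extra tractor direction contributes one controlled eigenvalue, and the degeneration locus is governed by where $\lambda^i$ interacts with eigenspace collisions), but that is precisely the work you have not supplied. The integrals approach the paper points to avoids all of this bookkeeping: it is more elementary, stays on $TM$, and uses only that $A$ is $g$-self-adjoint together with constancy of $I_t$ along geodesics.
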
 

We will not prove this corollary here, and refer the reader to \cite[\S 5.3]{GoverMatveev2015}. Note that   in the Riemannian case (and later we will need this corollary only in the Riemannian case), Corollary was proved in \cite{BolsinovMatveev2003,TopalovMatveev2003}.  
Let us  mention here that the proof of this corollary uses the same ideas as the proof of Theorem \ref{thm:5}, namely 
 the special form of the integrals (they are quadratic in the velocities and the family of the integrals depends  polynomially on $t$) and the condition that the integrals are  constant on geodesics. We will play with the same ideas below, in the proof of Proposition \ref{twocomponents}.

Let us now consider the following special case which will play principal role in the proof of the remaining part of Proposition \ref{prop:main}: \begin{enumerate} \item \label{item1} 
assume our manifold $(M,g)$ is complete and simply connected, \item  assume $\bar \sigma$ is 
 a solution of the metrisability  equation (for the Levi-Civita connection of $g$)  \item  such that the tensor $A$ given by \eqref{eq:18}, where $\sigma$ is the solution of the metrisability equation corresponding to the metric $g$, 
 at the  generic point has the following structure of eigenvalues:  it has three eigenvalues: $0$ (of multiplicity $m$), $\lambda$ (of multiplicity $1$) and $1$ (of multiplicity $\bar m$).  \label{itemlast}\end{enumerate} 
 
 We allow that $m$ or $\bar m$ are zero.  Clearly,   $\lambda$ is a smooth function, and $n=m+\bar m+1$. 
 
 Now, denote by $M_0$ resp. $M_1$ the sets  $M_0:= \{ p\in M \mid \lambda(p)= 0\}$ and  $M_1:= \{ p\in M \mid \lambda(p) = 1\}$. 
Our goal is to prove the following proposition: 
 
 \begin{proposition} \label{twocomponents} 
 Under the assumptions (\ref{item1}-\ref{itemlast}) above, $M_0$ (resp. $M_1$) is either empty or a smooth totally geodesic geodesically complete submanifold of dimension $\bar m$  (resp. $m$) which has at most two connected components.  

Moreover, for any  point $\gamma(s)$ of any  geodesic $\gamma$ orthogonal to $M_1$ or to $M_0$  the  velocity  vector  $\dot \gamma(s)$  is a  $\lambda$-eigenvector of $A$.   
 \end{proposition}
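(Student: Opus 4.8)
The plan is to combine the metrisability equation -- in its Sinjukov form of Remark \ref{formula:sinjukov} and in the prolonged form from \cite{EastwoodMatveev2007} -- with the conserved quadratic integrals $I_t$ of \eqref{integral}, arguing along geodesics as in the proofs of Theorem \ref{thm:5} and Corollary \ref{gover}. \textbf{Step 1: a global function $\lambda$ and its level structure.} Since the eigenvalues of $A$ other than the moving one are the constants $0$ and $1$, on the open dense set where $A$ has three distinct eigenvalues one has $\operatorname{tr}A=\lambda+\bar m$; hence $\lambda:=\operatorname{tr}A-\bar m$ is a globally defined smooth function on $M$ with $M_0=\lambda^{-1}(0)$ and $M_1=\lambda^{-1}(1)$. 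Passing to the Levi-Civita connection of $g$, the metrisability equation for the $(0,2)$-tensor attached to $\bar\sigma$ becomes the Sinjukov equation, which after raising an index reads $\nabla_kA^i_j=\Lambda^ig_{jk}+\lambda_j\delta^i_k$ with $\Lambda=\tfrac12\operatorname{grad}_g\lambda$ and $\lambda_j=\tfrac12\nabla_j\lambda$. From this identity one reads off the standard facts: $\operatorname{grad}_g\lambda$ is a $\lambda$-eigenvector of $A$ wherever $\lambda$ is a simple eigenvalue; the constant eigendistributions $E_0,E_1$ are tangent to the level sets of $\lambda$; and the prolongation of the equation expresses the Hessian of $\lambda$ through $\lambda$, $A$ and a single scalar, which forces $|\operatorname{grad}_g\lambda|^2_g$ to be a fixed polynomial $P$ of $\lambda$ whose only zeros in the range of $\lambda$ are $0$ and $1$. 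Consequently $\lambda$ is a Morse--Bott function with critical set $M_0\cup M_1$ and range contained in $[0,1]$, and reading the Hessian in the eigendirections, at each point of $M_1$ (resp. $M_0$) the normal space of $M_1$ (resp. $M_0$) is the eigenvalue-$1$ eigenspace of $A$, of dimension $\bar m+1$ (resp. the eigenvalue-$0$ eigenspace, of dimension $m+1$), while its tangent space is the eigenvalue-$0$ eigenspace (resp. the eigenvalue-$1$ eigenspace); so $M_1$ and $M_0$ are smooth submanifolds of dimensions $m$ and $\bar m$.

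\textbf{Step 2: orthogonal geodesics and total geodesy.} Let $\gamma$ be a unit-speed geodesic meeting $M_1$ orthogonally at $p=\gamma(0)$; by Step 1 the vector $\dot\gamma(0)$ lies in the eigenvalue-$1$ eigenspace at $p$, so $A\dot\gamma(0)=\dot\gamma(0)=\lambda(p)\dot\gamma(0)$. For any $s$ with $\gamma(s)$ generic, decompose $\dot\gamma(s)=v_0+v_\lambda+v_1$ along the eigenspaces of $A$; writing out $I_t(\dot\gamma)$ from \eqref{integral}, dividing by its universal polynomial factor and using that $I_t(\dot\gamma)$ is constant (comparing with its value at $p$), evaluation at $t=\lambda(\gamma(s))\notin\{0,1\}$ yields $|v_\lambda|^2_g=|\dot\gamma(s)|^2_g$, whence $v_0=v_1=0$, i.e. $\dot\gamma(s)$ is a $\lambda(\gamma(s))$-eigenvector of $A$. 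A geodesic orthogonal to $M_1$ is not tangent to $M_1$, hence does not lie in $M_1$, and it cannot enter $M_0$ because $M_0\cap M_1=\varnothing$; so it meets the generic set on a dense open subset of its domain and the conclusion extends by continuity to all of $\gamma$. This is the last assertion of the Proposition (the case of $M_0$ is symmetric). Performing the same computation for a geodesic tangent to $M_1$ at $p$, i.e. with $\dot\gamma(0)$ in the eigenvalue-$0$ eigenspace, the evaluation at $t=\lambda(\gamma(s))$ of a generic point it could reach would instead give $|v_\lambda|^2_g=\tfrac{\lambda-1}{\lambda}|\dot\gamma(s)|^2_g<0$ since $0<\lambda<1$ -- impossible; therefore such a geodesic stays in $M_1$, so $M_1$ (and likewise $M_0$) is totally geodesic, and geodesically complete because the ambient $g$ is.

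\textbf{Step 3: at most two components.} Suppose $M_1\neq\varnothing$ and fix a connected component $C$. By Steps 1--2 the geodesics issuing orthogonally from $C$ are, up to reparametrisation, the integral curves of the $\lambda$-eigendirection; along each of them $\lambda$ decreases monotonically from $1$, and since $|\operatorname{grad}_g\lambda|^2_g=P(\lambda)$ with $P(0)=P(1)=0$, either $\lambda$ only tends to $0$ -- in which case $M_0=\varnothing$, the geodesic never returns, the reparametrised gradient flow of $\lambda$ retracts $M$ onto $M_1$, and $M_1$ is connected because $M$ is -- or $\lambda$ reaches $0\in M_0$ and climbs back to the value $1$ after a fixed length $\ell$, meeting $M_1$ again orthogonally. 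In the latter case the time-$\ell$ normal exponential map of $C$ sends $C$ into $M_1$; its image is connected, hence lies in a single component $C'$, and reversing geodesics shows that the same construction from $C'$ returns into $C$. Since $(M,g)$ is complete and connected, the union of all geodesics orthogonal to $M_1$ is open and closed in $M$, hence equals $M$; so every component of $M_1$ is reachable from $C$, and $M_1$ has at most two components. The argument for $M_0$ is identical with $0$ and $1$ exchanged.

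\textbf{Main difficulty.} The technical heart is Step 1: showing that the prolongation makes $|\operatorname{grad}_g\lambda|^2_g$ a function of $\lambda$ alone with precisely the zeros $0$ and $1$, and that the tangent and normal spaces of the critical submanifolds $M_0,M_1$ coincide with the indicated eigenspaces of $A$. This is a computation with the Hessian of $\lambda$ near the collision set $M_0\cup M_1$, where the splitting $TM=E_0\oplus E_\lambda\oplus E_1$ degenerates; the integrals $I_t$ carry most of the load, but the curvature terms appearing in the prolongation must be handled with care, and the degenerate cases $m=0$ or $\bar m=0$ checked separately. Once this local picture is in place, Steps 2--3 are of the elementary kind already used to prove Theorem \ref{thm:5}.
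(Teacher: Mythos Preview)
Your Step 1 contains a genuine gap. The claim that the prolongation of the metrisability equation forces $|\operatorname{grad}_g\lambda|^2_g$ to be a fixed polynomial $P(\lambda)$ with zeros precisely at $0$ and $1$ --- and hence that $\lambda$ is a Morse--Bott function with range in $[0,1]$ --- is not true under hypotheses (\ref{item1}--\ref{itemlast}) alone. In the Levi-Civita normal form (Theorem \ref{LCtheorem}) one has $|\operatorname{grad}_g\lambda|^2_g=(\lambda')^2/|\lambda(1-\lambda)|$ with $\lambda=\lambda(x_1)$ essentially arbitrary; choosing $\lambda(x_1)$ with several turning points gives complete examples in which $|\operatorname{grad}_g\lambda|^2_g$ is \emph{not} a function of $\lambda$, and examples with $\lambda>1$ throughout are also possible. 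The ``extended system'' $\Lambda_{i,j}=\mu g_{ij}+Ba_{ij}$ with constant $B$ that you seem to have in mind requires degree of mobility $\ge 3$ (cf.\ \cite{KiosakMatveev2010}), which is not among the hypotheses of this Proposition. This gap then breaks Step 2's total--geodesy argument (the sign contradiction $|v_\lambda|^2=\tfrac{\lambda-1}{\lambda}|\dot\gamma|^2<0$ needs $0<\lambda<1$) and Step 3's monotonicity along orthogonal geodesics.

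The paper's proof bypasses all of this and works entirely with the conserved quadratic polynomial $\tilde I_t$ of \eqref{INT}. For $p\in M_0$ it shows that geodesics with initial velocity in the $1$--eigenspace remain in $M_0$ because $t=0$ must stay a \emph{double} zero of $\tilde I_t(\dot\gamma)$; conversely, looking at geodesics from a generic nearby point $q$ and using $\tilde I_0(\dot\gamma)=0$ pins $M_0$ down to an $\bar m$--dimensional exponential image. This yields the dimension, the identification of tangent and normal spaces, and total geodesy in one stroke, with no a priori control on the range of $\lambda$. The two--component bound is then obtained via a shortest geodesic between components (automatically orthogonal at both ends) together with the fact that orthogonal geodesics from one component sweep out all of $M$; no gradient--flow monotonicity is invoked. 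Your computation for the ``moreover'' clause is essentially the paper's and is fine once the normal space is known --- but you deduce the normal space from Step 1, which is where the argument fails.
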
 
 
 In fact, from the proof it will be clear that the total 
 number of connected components of $M_0\cup M_1$ is at most two (so if for example $M_0$ has two connected components then $M_1$ is empty).

 {\bf Proof.} Clearly $M_0$ and $M_1$ interchange if we replace $\bar \sigma$ by $\sigma-\bar \sigma$, so it is sufficient to prove Proposition for $M_0$ only. 
 
  By an \emph{adapted frame} at a point $p\in M$ we understand a basis $v_1,...,v_n$ in $T_pM$ such that in this basis the matrix of $g$ is the identity matrix, and the matrix of $A$ is diagonal such that the first diagonal element  
 is $\lambda$, the next $m$ diagonal elements are $0$, and the remaining $\bar m$ diagonal elements are $1$:
 \begin{equation}\label{diagonal-1}
g= \diag(1,1,...), \   A= \diag(\lambda, \underbrace{0,...,0}_m,\underbrace{1,...,1}_{\bar m} ).
\end{equation}
  The existence of such a basis follows from linear algebra. We see that the first vector $v_1$ is an eigenvector  of $A$ with 
  eigenvalue $\lambda$, the next vectors $v_1,...,v_{m+1}$ are eigenvectors with eigenvalue $0$, and the last vectors $v_{m+2},...,v_{n}$ are eigenvectors with eigenvalue $1$. 
 
 We assume that $m\ge 1$ and $\bar m\ge 1$; the cases when $m$ or $\bar m$ are zero can be handled by the same methods and are easier. 
 In the adapted frame, the family of  integrals $I_t( \xi) $ is given  by 
 $$
 I_t( \xi)= (t-1)^{m-1}t^{\bar m-1}\left( t(t-1)\xi_1^2 + (t-1)(t-\lambda) (\xi_2^2+...+\xi_{m+1}^2)+ t(t-\lambda)(\xi_{m+2}^2+...+\xi_{n}^2)\right). 
 $$
  Since for every $t$ the coefficient  $(t-1)^{m-1}t^{\bar m-1}$ is a constant, we have the following family of the integrals:
 \begin{equation} \label{INT}
 \tilde I_t( \xi)=   t(t-1)\xi_1^2 + (t-1)(t-\lambda) (\xi_2^2+...+\xi_{m+1}^2)+ t(t-\lambda)(\xi_{m+2}^2+...+\xi_{n}^2) . 
 \end{equation}

Suppose  $p\in M_0$. Let us show that locally, near $p$, there exists a submanifold of dimension $\bar m$ containing $p$ and lying in $M_0$. 
In order to do it, consider a sufficiently small $\varepsilon>0$ and the set 
$$S:=\{\xi\in T_pM\mid \xi_1=...=\xi_{m+1}=0, \xi_{m+1}^2+...+\xi_n^2<\varepsilon\}, $$ 
it is an open disc containing zero in the  subspace of dimension $\bar m$. 
Consider all geodesics starting from $p$ with the initial velocity vector lying in $S$. Let us show that all points of all such geodesics $\gamma$ belong to  $M$. 

In order to do it, observe that the family of the integrals \eqref{INT} starting from $p$ with the nonzero  initial velocity vector 
 lying in $S$    is given by $t^2(\xi_{m+2}^2+...+\xi_{n}^2)$; that is, $t=0$ is a zero of order $2$.   Then, the same should be fulfilled at any other point $q$ of such a geodesic. Substituting $t=0$ in \eqref{INT} we obtain 
 \begin{equation}
 0(0-1)\xi_1^2 + (0-1)(0-\lambda) (\xi_2^2+...+\xi_{m+1}^2)+(0-0)(0-\lambda)(\xi_{m+2}^2+...+\xi_{n}^2)=0.  \label{ordering}
\end{equation}
By  way of contradiction, suppose $\lambda\ne 0$. Then,  from \eqref{ordering} we obtain $\xi_2=...=\xi_{m+1}=0$. Substituting this in   \eqref{INT}, we obtain  that the family  of the integrals $\tilde I_t( \xi)$ 
 is 
$$t\left((t-1)\xi_1^2 +   (t-\lambda)(\xi_{m+2}^2+...+\xi_{n}^2)\right).$$ In order $t=0$ be a zero of order $2$, $(0-1)\xi_1^2 +   (0-\lambda)(\xi_{m+2}^2+...+\xi_{n}^2)$ should be zero which is not possible since the velocity vector is not zero. The contradiction shows that at the point $q$ the value of $\lambda$ is again $0$, so the whole geodesic lies in $M_0$. Thus,  in a small neighborhood of $p$  the image of the exponential mapping  of the set $S$, which we denote by $M'_0$,  lies in $M$;  if $\varepsilon$ is small enough, $M'_0$  is  an embedded disk of dimension $\bar m$ lying in $M_0$. 

Let us now show that a sufficiently  small neighborhood $U$ of $p$ does not contain other points of $M_0$ except of those lying in the image of $S$ w.r.t. the exponential mapping. W.l.o.g. we think that  $U$ is geodesically convex; all geodesics considered below are assumed to be contained in $U$.

Take now  a point $q\in U$ such that $q\not \in M_0$, almost every point of $U$ has this property   by Corollary  \ref{gover}. 
Consider a geodesic connecting $q$ with a point $p$ of $M_0$. Since at $p$ we have $\lambda=0$,   we see that for $t=0$ the integral $\tilde I_t$ is zero. The, the same should be true at the point $q$ which gives us \eqref{ordering} which implies that at the point $q$ the tangent vector to the geodesic  lies in the $\bar m+1$-dimensional subspace $S_q \subseteq T_qM$ given by the condition 
$
\xi_2=...=\xi_{m+1}=0.
$

Thus, $M_0$ lies in the image of the exponential mapping  of  $S_q$, which is a $\bar m+1$-dimensional embedded submanifold.

 Take now another point   $q'\in U$ such that   $q'$ does not lie in the image of the exponential mapping of  $S_q$.   The dimension of the manifold implies the existence of such a point, since $m\ne 0$ (otherwise the set $M_0$ is empty and it is nothing to prove) so  $\bar m+1= \operatorname{dim} S_q$ is smaller than $ m+ \bar m + 1= n= \operatorname{dim} M$. Next,  repeat the argumentation we did for $q$  for the point $q'$. 
 We again have that $M_0$ lies in the image of the exponential mapping   of  $S_{q'}$. Since the intersection of the image of the exponential mapping  of  $S_{q'}$ and of $S_q$ is a $\bar m$-dimensional submanifold (at least, if we are working in a very small neighborhood), we obtain that all points of 
  $M_0$  lie in $M'_0$. Thus, $M_0$ is a submanifold of dimension $\bar m$. Locally, it coincides with $M'_0$ which implies that it is totally geodesic. 
  
  Let us now show that $M_0$ contains at most two components, and that 
  the  velocity  vectors  of geodesics  orthogonal to $M_0$ are   $\lambda$-eigenvectors of $A$.    Consider a point $p\in M_0$ and consider a geodesic starting from $p$ with the initial velocity vector orthogonal to $M_0$. As we have seen above, in an admissible frame, the tangent space to $M_0$ is given by the equation   $\xi_1=...=\xi_{m+1}=0$, so the initial velocity vector of our geodesic has 
  $\xi_{m+2}=...=\xi_n=0$. Then, the family of the  integrals  \eqref{INT} is given by 
  $$
  t(t-1)(\xi_1^2  + \xi_2^2+...+\xi_{m+1}^2)
  $$
and we see that $t=0$ and $t=1$ are zeros of it. Then, the same is true at any point of the geodesic. Substituting $t=0$ and $t=1$ in \eqref{INT}, we obtain $$  \lambda(\xi_2^2+...+\xi_{m+1}^2) =0 \textrm{ and } 
    (1-\lambda)(\xi_{m+2}^2+...+\xi_{n}^2)=0.$$ Thus, at the points $q$
    such that $\lambda(q)\not\in\{0,1\}$ we have that the velocity vector of the geodesic such that it orthogonally passes through a point of $M_0$ is an eigenvector corresponding to  $\lambda$. By continuity and since both $M_0$ and $M_1$ are geodesically complete, it is so at all points.

    Suppose now there exists at least two connected component of $M_0$, which we denote by $M_0(1) $ and $M_0(2)$; our goal is to show that $M_0=M_0(1) \cup M_0(2)$. For any point $p$ of $M_0(1)$, 
    consider  the shortest geodesic containing the point $p$ with the points of $M_0(2)$ (the existence is standard and follows from completeness). W.l.o.g. assume there is no other point of $M_0$ on this geodesic  between its startpoint $p\in M_0(1)$ and its endpoint on $M_0(2)$, otherwise replace $M_0(1)$ by  the connected component of that point.   As we have shown above, at every point of this geodesic its velocity vector is an eigenvector of $A$ 
    with eigenvalue $0$. Then, this geodesic is orthogonal to   $M_0(1)$ and $M_0(2)$. Then, each geodesic  starting from a point of $M_0(1)$  orthogonal to $M_0(1)$ comes to $M_0(2)$, and on the way from $M_0(1)$ to $M_0(2)$ it contains no other point of $M_0$. 
    Clearly, the union of all geodesics starting from  $M_0(1)$  orthogonal to $M_0(1)$ covers the whole manifold. Finally, there is simply no place for other connected components of $M_0$. Proposition \ref{twocomponents} is proved.

\begin{remark} \label{rem:byproduct} As a by-product  we have proved that (under the assumptions  that $A$ has three eigenvalues $\lambda$, $0$, $1$ such that $\lambda \ne 0,1$ in a neighborhood we work in), the 1-dimensional eigendistribution corresponding to $\lambda$ is  totally  geodesic.
In the case $M_0$ (resp.   $M_1$) is  not empty, the integral submanifolds of this distribution are precisely the geodesics orthogonal to $M_0$ (resp.  $M_1$). Along these geodesics, points of $M_0$ can be conjugate to points of $M_0$ or $M_1$ only. 

So topologically the manifold is glued from the normal bundles to $M_0$ and to $M_1$ (provided that $M_0\cup M_1$ is not empty. 

\end{remark}

\subsection{ Levi-Civita Theorem  assuming (\ref{item1}-\ref{itemlast}).}  \label{isometry}

Let us now describe the metric $g$ and the (1,1)-tensor $A$, locally and globally, under the assumptions (\ref{item1}-\ref{itemlast}) of \S \ref{intmul}. 
 Locally, it is a special case of what was done  in  \cite{Levi-Civita1896}, we recall it for the convenience of the reader.

\begin{theorem}[\cite{Levi-Civita1896}] \label{LCtheorem} Let $g$ be a metric (of arbitrary signature.  Assume $\bar \sigma$ is 
 a solution of the metrisability  equation (for the Levi-Civita connection of $g$)  
   such that the tensor $A$ given by \eqref{eq:18}, where $\sigma$ is the solution of the metrisability  equation corresponding to the metric $g$, 
 in each point of some neighborhood of $p$  has the following structure of eigenvalues:  it has three eigenvalues: $0$ (of multiplicity $m$), $\lambda\not\in \{0, 1\}$  (of multiplicity $1$) and $1$ (of multiplicity $\bar m$). Then, in some neighborhood of this point there exist coordinates $x_1,x_2....,x_{m+1},x_{m+2},...,x_n$ such that the following conditions are fulfilled:  In this coordinates  $\lambda$ is a function of the variable $x_1$ only, $A$ is a diagonal matrix as below,
 $$
 A= \diag(\lambda(x_1),\underbrace{0,...,0}_m, \underbrace{1,...,1}_{\bar m}),
 $$
 and 
  the metric has the block-diagonal form  below (with one block of dimension $1\times 1$, one block of dimension $m\times m$ and one block of dimension $\bar m\times \bar m$):
\begin{equation}\label{LCformula}
 g= \pm \lambda (1-\lambda) dx_1^2 + (1-\lambda) \sum_{i,j=2}^{m+1} h_{ij} dx_idx_j +  \lambda \sum_{i,j={m+2}}^{n} \bar h_{ij} dx_idx_j,
 \end{equation}
 where the components  $h_{ij}$  are symmetric in $i$ and $j$ and depend on the coordinates $x_{2},...,x_{m+1}$ only, and the components  $\bar 
 h_{ij}$ are symmetric in $i$ and $j$ and depend on the coordinates $x_{m+2},...,x_{n}$ only. 
\end{theorem}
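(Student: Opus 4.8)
The plan is to combine the local structure of a projectively equivalent pair (encoded by the metrisability solutions $\sigma,\bar\sigma$, hence by the $(1,1)$-tensor $A$) with the classical Levi-Civita normal form. Since the statement asserts essentially a special case of \cite{Levi-Civita1896}, the cleanest route is to work directly with the eigenspace decomposition of $A$ and the integrability of the corresponding distributions. First I would record what the hypotheses on $A$ give us: $A$ is $g$-selfadjoint (this follows from \eqref{L}, as $A = |\det\bar g/\det g|^{1/(n+1)}\bar g^{-1}g$ and both $g,\bar g$ are symmetric), so $TM$ splits $g$-orthogonally into the smooth eigendistributions $V_\lambda$ (rank $1$), $V_0$ (rank $m$), $V_1$ (rank $\bar m$). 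The point is to show these distributions are integrable with totally geodesic leaves for $V_0$ and $V_1$, while $V_\lambda$ is totally geodesic as well — this last fact is exactly \emph{Remark \ref{rem:byproduct}}, or can be re-derived from the Sinjukov-type equation \eqref{eqn:sinjukov} satisfied by the unweighted tensor $a$ built from $\bar\sigma$.

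The second step is to exploit \eqref{eqn:sinjukov}, $a^{ij}_{\ \ ,k} = \lambda^i\delta^j_k + \lambda^j\delta^i_k$ with $\lambda^i = \tfrac12 g^{is}(\operatorname{tr}_g a)_{,s}$, which is equivalent to the metrisability equation \eqref{5} when a metric is present in the projective class. Writing this out in an adapted orthonormal frame as in \eqref{diagonal-1}, one reads off how $\nabla A$ relates to $d\lambda$: the covariant derivative of $A$ is supported on the $\lambda$-eigenline and the gradient of $\lambda$. Concretely one obtains $\nabla\lambda$ is an eigenvector of $A$ with eigenvalue $\lambda$ (so $d\lambda$ annihilates $V_0\oplus V_1$), and the eigenvalues $0$ and $1$ being constant forces $V_0$ and $V_1$ to be integrable and totally geodesic (this is the standard computation: differentiating $A v = \mu v$ for constant $\mu$ along directions in $V_\mu$ and using the explicit form of $\nabla A$). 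Then one picks coordinates: $x_1$ a function of $\lambda$ along the $V_\lambda$-leaves (legitimate since $d\lambda\neq 0$ off a nowhere dense set, by Corollary \ref{gover} or directly), and coordinates $x_2,\dots,x_{m+1}$ on a fixed leaf of $V_0$, $x_{m+2},\dots,x_n$ on a fixed leaf of $V_1$, transported by the flow along $V_\lambda$. In these coordinates $A = \diag(\lambda(x_1),0,\dots,0,1,\dots,1)$ and $g$ is block-diagonal because the three distributions are mutually $g$-orthogonal and each totally geodesic.

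The third step is to pin down the functional dependence of the blocks and the conformal factors $\pm\lambda(1-\lambda)$, $(1-\lambda)$, $\lambda$. Here I would again feed the block-diagonal ansatz back into \eqref{eqn:sinjukov} (equivalently, into the metrisability equation, the multidimensional analogue of system \eqref{metrization} used in the proof of Theorem \ref{thm:dini}). The off-block equations will say that the $x_1$-derivatives of the $h_{ij}$ and $\bar h_{ij}$ vanish, i.e.\ $h_{ij}=h_{ij}(x_2,\dots,x_{m+1})$ and $\bar h_{ij}=\bar h_{ij}(x_{m+2},\dots,x_n)$, and that there is no cross-dependence of the $V_0$-block on the $V_1$-coordinates or vice versa; the equations tangent to $V_\lambda$ will fix the $\lambda$-dependent prefactors exactly as in the two-dimensional Dini computation, where the analogous step produced the factors $X-Y$ and $1/X$, $1/Y$. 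The main obstacle is organizing this last computation cleanly in arbitrary dimension $n$ with the two multiplicities $m,\bar m$ possibly positive: unlike the $n=2$ case one cannot just solve a $4\times 4$ linear system, so the bookkeeping of which components of the metrisability equation force which vanishing is the delicate part. The degenerate subcases $m=0$ or $\bar m=0$ are genuinely easier (one block disappears) and I would dispatch them by the same argument with fewer equations. Everything else — self-adjointness of $A$, orthogonality and total geodesy of the eigendistributions, existence of the adapted coordinates — is routine once the Sinjukov equation \eqref{eqn:sinjukov} is in hand, so I would present those briefly and spend the bulk of the proof on the explicit integration giving \eqref{LCformula}.
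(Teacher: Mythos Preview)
Your approach is essentially what the paper indicates: the paper does not give a detailed proof here but only says the argument is ``analogous to the proof of Theorem~\ref{thm:dini}'' --- i.e., produce coordinates adapted to the eigendistributions of $A$ and then substitute into the metrisability/Sinjukov equation and integrate. Your sketch is therefore more fleshed out than the paper's own treatment, and the overall strategy is correct.

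One genuine correction: the eigendistributions $V_0$ and $V_1$ are \emph{not} totally geodesic. From \eqref{LCformula} one computes, for $a,b\in\{2,\dots,m+1\}$,
\[
\Gamma^1_{ab}=\tfrac12\,g^{11}\,\partial_{x_1}\!\bigl[(1-\lambda)h_{ab}\bigr]=-\tfrac12\,g^{11}\lambda'\,h_{ab},
\]
which is nonzero whenever $d\lambda\neq 0$; the second fundamental form of a $V_0$-leaf in the $V_\lambda$-direction does not vanish. What you actually need (and what does hold) is that $V_0$, $V_1$, $V_\lambda$ and each pairwise sum $V_0\oplus V_1$, $V_0\oplus V_\lambda$, $V_1\oplus V_\lambda$ are integrable; equivalently, the Nijenhuis tensor of $A$ vanishes. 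This follows from \eqref{eqn:sinjukov} together with $g$-selfadjointness of $A$: writing $(\nabla_ZA)W=\lambda^\sharp\,g(W,Z)+\lambda(W)\,Z$ and plugging into the Nijenhuis formula, the symmetric terms cancel and the remainder is $\lambda^\sharp\bigl(g(AX,Y)-g(AY,X)\bigr)=0$. Your ``differentiate $Av=\mu v$'' argument gives integrability of each $V_\mu$ separately only after you know $\lambda^\sharp\in V_\lambda$; you assert this, but it is itself a nontrivial consequence of the Sinjukov equation (not merely of selfadjointness), so it deserves its own line. Once these integrabilities are in place, Frobenius for the complements gives the simultaneously adapted coordinates, and your third step goes through as written.
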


In Theorem \ref{LCtheorem} above we assumed that both $m$ and $\bar m$ are different from zero. In fact, theorem remains correct also without this assumption, but in the case when $A$ has only 1 eigenvalue one can make additional simplifications (e.g., the metric can be brought to the standard warped product form). 

In fact, Levi-Civita proved this statement for positive definite metrics only and we will use this statement for positive definite metrics only; but   his proof is valid for all signatures. Note that the proof of  Theorem \ref{LCtheorem} can be obtained analogous to the proof of Theorem \ref{thm:dini}. 

Note that in the positive definite case the sign $\pm$ before $\lambda (1-\lambda) dx_1^2$ is such that $\pm \lambda (1-\lambda) dx_1^2$
is positive, and  $h$ and $\bar h$ are metrics on $m$ resp. $\bar m$-dimensional manifolds; they are positive of negative 
definite depending on the sign of $(1-\lambda)$  and $\lambda$. 

Let us now consider the geometric sense of the coordinates, and also of $h_{ij}$ and $\bar h_{ij}$.
 Clearly, $\tfrac{\partial }{\partial x_1}$ is a $\lambda$-eigenvector of $A$,  and $\tfrac{\partial }{\partial x_2}$,...,  $\tfrac{\partial }{\partial x_{m+1}}$ are $0$-eigenvectors of $A$, and  $\tfrac{\partial }{\partial x_{m+2}}$,...,  $\tfrac{\partial }{\partial x_{n}}$ are $1$-eigenvectors of $A$. We see that the eigendistribution  of $\lambda, 0, $ and of $1$ are simultaneously integrable. We see also that $h$ and $\bar h$ can be viewed as metrics on integral manifolds of the eigendistribution corresponding to $0$  and to $1$; in fact that up to the coefficient $(1-\lambda)$ and $\lambda$ they are  the restriction of the metric $g$ to these integral manifolds. 

Combining this with Remark  \ref{rem:byproduct}, we see that  if $M_0$ is not empty then 
 the metric $\bar h$ is essentially the restriction of the  metric $g$  to $M_0$ and if $M_1$ is not empty then the metric $h$ is essentially the restriction of the  metric $g$ to  $M_1$. In particular, isometry of  $M_0$ or $M_1$  induces an isometry of the whole manifold.

\subsection{The structure of eigenvalues of $A$} 

We continue with the proof of Proposition \ref{prop:main}. 
We assume that $g$ is a complete  simply connected  Riemannian metric on a connected $M^n$ of dimension $n\ge 2$; we assume  
$\dim(\operatorname{Sol})=2$  and the  existence of a projective transformation $\phi$ which is not an affine transformation  such that its action on $\operatorname{Sol}$ is as in the first case of \eqref{matrices} with $ c  \ne  \bar c$.    Our general goal is to prove that the sectional curvature of $g$ is positive constant. We can assume without loss of generality that $c>\bar c>0$ (since if both $c$ and $\bar c$ are negative, we consider $\phi^2$ instead of $\phi$).

 Our first goal (Proposition \ref{lemma3} below) is to show  that there exists a solution of the metrisability  equation such that the conditions (\ref{item1}-\ref{itemlast}) of \S \ref{intmul} are fulfilled.

First  consider, for each $k\in \mathbb{\mathbb{Z}}$, 
 the $(1,1)$-tensor $A_k$ constructed by the solutions $ \sigma + \bar \sigma$  and ${\phi^k}^*(\sigma + \bar \sigma)$ by  the formula 
    \eqref{eq:18}: we tensor  multiply  the inverse of the weighted tensor $\sigma + \bar \sigma$ by ${\phi^k}^*(\sigma + \bar \sigma)$ 
     and contract with respect of one upper and one low index. In the terms of projectively equivalent metrics $g$ and $\bar g(k) := {\phi^k}^*(g)$ this tensor is given by \eqref{L}.

 Next, take a point $p$ and  consider a basis in $T_pM$  such that 
 in this basis $\sigma$ and $\bar \sigma$ are given by diagonal matrices and $g$ is given by the identity matrix (we choose the way  (B) of representing weighted tensors by matrices).  
\begin{equation}\label{diagonal}
g= \diag(1,1,...), \   \sigma= \diag(s_1, s_2,...), \ \bar \sigma=  \diag(\bar s_1, \bar s_2,...).
\end{equation}

 Since ${\phi^k}^* \sigma= c^k \sigma$ and ${\phi^k}^* \bar \sigma= \bar c^k \bar \sigma$ we obtain that in this basis 
 the matrix of
  $A_k$ is  $$
 A_k= \diag\left( { 
 c^k s_1 + \bar c^k\bar s_1 ,  c^k s_2 + \bar c^k \bar s_2} ,...\right). 
 $$
 Since the metrics $g$ and therefore  ${\phi^k}^*g$ are positive definite, the eigenvalues of $A_k$ 
 must be  positive.  Since it should be true for any $k$,   each diagonal element  $s_1,s_2,...$ and also $\bar s_1, \bar s_2,...$
  is greater than or equal to zero.  Since $g$ corresponds to  $\sigma + \bar \sigma$, we actually 
 have  that each $s_i$  satisfies $0\le s_i \le  1$ and $\bar s_i= 1 - s_i$. 
   We also see that if $s_i=0$, then the corresponding eigenvalue of $A_k$ is  constant equal to $c^k$. 
    Similarly,  if  $s_i=1$  
 then 
 the corresponding eigenvalue  of $A_k$ is  $\bar c^m$

 Let us show now that each 
  $A_k$ has at most three eigenvalues and at most one   eigenvalue is different from $c^k$ and $\bar c^k$.
   Moreover, the multiplicities $m, \bar m$ of   the eigenvalues  $c^k$ and $\bar c^k$ satisfy $c^{m+1} =-\bar c^{ -\bar m -1}$.

  Consider  the basis in $T_xM$
   as above: the metric $g$ and the solutions $\sigma, \bar \sigma$ are given by \eqref{diagonal}. We will assume that the point  $p$ is generic.  Take $k=1$;  
  Assume that the eigenvalue  $c$ has multiplicity $m$  
   and  the eigenvalue  ${\bar c}$ 
  has multiplicity $2\bar m$. We allow of course that $m$ and/or  $\bar m $
   are zero.  
 Then, the number of $s_i$ different from $1$ and $0$ is  $(n-m- \bar m)$.

 \begin{proposition} \label{lemma3} If at least  one of the following two conditions, $$n- m- \bar m= 1 \textrm{  and } c^{m+1} =\bar c^{ -\bar m -1},$$ is not fulfilled, 
 then the  metric has constant sectional curvature. 
 \end{proposition}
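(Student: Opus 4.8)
The plan is to work with the $(1,1)$-tensor $A := A_1$ built via \eqref{eq:18} from $\sigma_g := \sigma+\bar\sigma$ (the solution corresponding to $g$) and from $\phi^*\sigma_g = c\sigma+\bar c\bar\sigma$. In the basis \eqref{diagonal} its eigenvalues are the functions $\mu_i = \bar c + (c-\bar c)s_i\in[\bar c,c]$, which are identically $c$ on the $m$ slots with $s_i\equiv 1$, identically $\bar c$ on the $\bar m$ slots with $s_i\equiv 0$, and otherwise range in the open interval; by Corollary \ref{gover} together with the usual splitting behaviour of such tensors, the remaining $\ell := n-m-\bar m$ eigenvalues each have multiplicity one at generic points. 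The goal is to show that unless $\ell = 1$ and the stated algebraic relation between $c,\bar c,m,\bar m$ holds, $g$ has constant sectional curvature.

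The key point, which I would establish first by a short direct computation, is that $\phi$ acts on every non-constant eigenvalue-function in an explicit way. By naturality of the construction \eqref{eq:18} one has $\phi^*A = A(\phi^*\sigma_g,\phi^{2*}\sigma_g)$, whose eigenvalues in the same basis are $\tfrac{c^2 s_i + \bar c^2(1-s_i)}{c s_i + \bar c(1-s_i)}$; equating these, slot by slot, with the eigenvalues of $A$ at $\phi(x)$ (legitimate since $A$ and $\phi^*A$ are simultaneously diagonal in the basis diagonalising $\sigma,\bar\sigma$, so their eigendistributions coincide) yields $s_i\circ\phi = \tfrac{c\,s_i}{c s_i + \bar c(1-s_i)}$ on every non-constant slot. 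Equivalently, with $w_i := s_i/(1-s_i)\in[0,\infty]$, we get $w_i\circ\phi = \tfrac{c}{\bar c}\,w_i$. Since $c/\bar c > 1$ and $\phi$ is a bijection of $M$, the quantities $\sup_M w_i$ and $\inf_M w_i$ are fixed by multiplication by $c/\bar c$, hence equal $\infty$ and $0$ respectively; in particular $0$ and $1$ lie in the closure of the range of each non-constant $s_i$.

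If $\ell = 0$, then $A$ has only the two constant eigenvalues $c,\bar c$ and is therefore parallel (Remark \ref{formula:sinjukov}); in the complete simply connected setting this makes $g$ a Riemannian product whose (parallel, integrable) eigendistributions are preserved by $\phi$, so $\phi$ induces projective transformations of the two complete factors, and one concludes via the lower-dimensional cases of Theorem \ref{thm:new} that $\phi$ is affine — contradicting the standing assumption — the borderline case of a factor of constant positive curvature being traced out separately. If $\ell\ge 2$, there are at least two independent non-constant eigenvalue-functions, each having $0$ and $1$ in the closure of its range; feeding this into the multi-block Levi--Civita normal form and using completeness of $g$ together with the conservation of the quadratic integrals $I_t$ — exactly as in the proofs of Theorem \ref{thm:5} and of Corollary \ref{gover} — yields a contradiction. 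Hence $\ell = 1$; replacing $A$ by the normalised tensor $\tfrac{A-\bar c}{c-\bar c}$ (the tensor built from $\sigma_g$ and $\sigma$, with non-constant eigenvalue $\lambda := s_{i_0}\in[0,1]$) puts us under the hypotheses (\ref{item1}-\ref{itemlast}), so Proposition \ref{twocomponents}, Theorem \ref{LCtheorem} and Remark \ref{rem:byproduct} present $g$ near generic points in the block form \eqref{LCformula} with $\lambda=\lambda(x_1)$, and globally describe $M$ as glued from the normal bundles of the totally geodesic submanifolds $M_0,M_1$ — both non-empty by the previous paragraph, once one checks the relevant suprema are attained — the integral curves of the $\lambda$-eigendistribution being exactly the geodesics running orthogonally from $M_0$ to $M_1$.

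Along such an orthogonal geodesic, $\lambda$ satisfies a first-order ODE $(\dot\lambda)^2 = P(\lambda)$ whose form is classical (it is extracted from \eqref{LCformula} and the metrisability equation); completeness pins down the interval swept by $\lambda$ and the ``travel time'' between $M_0$ and $M_1$, while the relation $w\circ\phi = \tfrac{c}{\bar c}w$ says that $\phi$ sends each such geodesic to another one rescaling the $w$-profile by the fixed factor $c/\bar c$, the arclength reparametrisation being the one forced on a geodesic by a projective map (so governed by $|d\phi\, v|_g^2 = (\phi^*g)(v,v)$, an explicit function of $\lambda$ on the $\lambda$-eigenline). Matching these constraints forces precisely the stated algebraic relation between $c,\bar c$ and the multiplicities $m,\bar m$; when that relation fails, the only $P$ left compatible with both completeness and the $\phi$-action is the polynomial corresponding to constant sectional curvature, which gives the conclusion. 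I expect the main obstacle to be this last step — performing the Levi--Civita reduction explicitly enough to pin down $P(\lambda)$ and then matching the completeness constraint on the orthogonal geodesics with the $c/\bar c$-scaling action of $\phi$ — with, as a secondary nuisance, verifying that the suprema and infima are attained (so that $M_0,M_1$ are genuine submanifolds rather than ideal ends) and handling the bookkeeping of the $\ell=0$ product case.
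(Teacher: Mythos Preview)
Your route is genuinely different from the paper's, and the two substantive gaps lie exactly where you suspected. For $\ell\ge 2$ you write that ``feeding this into the multi-block Levi--Civita normal form \ldots\ yields a contradiction, exactly as in the proofs of Theorem~\ref{thm:5} and of Corollary~\ref{gover}'', but neither of those proofs gives what you need. What you would have to show is a global ordering statement: that on a complete Riemannian manifold the ranges of two distinct nonconstant eigenvalue-functions of $A$ are disjoint intervals, which --- together with your (correct) observation that each nonconstant $s_i$ has $\inf=0$ and $\sup=1$ --- would indeed force $\ell\le 1$. Such an ordering result exists in the literature, but it is not the content of Theorem~\ref{thm:5} or Corollary~\ref{gover}, and you would also need to show that the constant eigenvalues $0$ and $1$ of the normalised tensor do not spoil the argument. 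More seriously, in the $\ell=1$ case the derivation of $c^{m+1}=\bar c^{-\bar m-1}$ is not carried out at all. Your claim that $(\dot\lambda)^2=P(\lambda)$ along the orthogonal geodesics ``is extracted from \eqref{LCformula} and the metrisability equation'' is not correct: in the Levi--Civita form \eqref{LCformula} the function $\lambda(x_1)$ is arbitrary, so the metrisability equation alone imposes no ODE on $\lambda$. One would need to bring in the action of $\phi$, the completeness constraint, and possibly the $\dim(\mathrm{Sol})=2$ hypothesis simultaneously, and even then it is unclear how the specific exponent relation emerges from your setup. (You also need $M_0,M_1$ non-empty before you can run this argument; that the suprema are attained is itself a non-obvious consequence of completeness that the paper handles separately.)

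By contrast, the paper's proof avoids the case split entirely. It computes the eigenvalues of $G_k=g^{-1}(\phi^k)^*g$ explicitly (formula~\eqref{matrixGt}) and their asymptotics~\eqref{as} as $k\to\pm\infty$, and shows that \emph{unless} the pair of inequalities~\eqref{equality} holds --- which is exactly equivalent to the pair of conditions in the Proposition --- all eigenvalues of $G_k$ decay exponentially in one of the two limits. This makes $\phi^k(p)$ a Cauchy sequence, hence convergent to some $P$; evaluating the projectively invariant scalar $F$ built from the Weyl tensor (or Liouville tensor in dimension~$2$) along this sequence and comparing with $F(P)$ forces the Weyl tensor to have a nullity, after which one invokes the results of \cite{GoverMatveev2015} and \cite{KiosakMatveev2009} to conclude constant sectional curvature. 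The relation $c^{m+1}=\bar c^{-\bar m-1}$ thus appears not from an ODE but as the precise balance of exponential rates that prevents the Cauchy-sequence argument from running.
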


   {\bf Proof. } 
   Let us consider the (1,1)-tensor 
   $G_k=g^{is}\bar g(k)_{sj}$, where $\bar g(k):= {\phi^k}^* g .$ Clearly, $ g$, ${\phi^k}^* g $ 
   and    $G_k$ are related by 
   ${\phi^k}^* g( \cdot ,\cdot )= g(G_k\cdot , \cdot)$ and $G_k$,  and $A_k$ are related by    \begin{equation} \label{Gt} G_k= \tfrac{1}{\sqrt{\det(A_k)}}A_k^{-1}.\end{equation} We take a generic point $p$ and consider a basic in $T_pM$ such that in this basis $g$,
     $\sigma$ and $\bar \sigma$ has the diagonal form \eqref{diagonal}; we assume now that the first $ n-  m  -  \bar m$ diagonal elements of $\sigma$ are not zero and denote them by $s_1, s_2,...$,
    the next $m$ elements 
   are  equal to $1$, and the remaining $\bar m$ elements are zero. Then, in this  basis, the matrix of $A_k$ is given by 
   $$
   \diag\left(\underbrace{s_1 c^k + (1-s_1) \bar c^k,s_2 c^k  + (1-s_2) \bar c^k,...}_{n -\bar m - m}, \underbrace{c^k,...}_{m}, \underbrace{\bar c^k,...}_{\bar m}\right).   
   $$
   Note that $n -\bar m - m\ge 1$, since otherwise the projective transformation is an affine transformation, as it follows from 
   Remark \ref{formula:sinjukov}.
   In view of \eqref{Gt},  the matrix of $G_k$  in this basis 
   is given by
   \begin{equation}\label{matrixGt}
c^{-km}\bar c^{- k\bar m}  \prod_{i=1}^{n-m-\bar m}\frac{1}{s_1 c^k + (1-s_1) \bar c^k} 
   \diag\left(\underbrace{\tfrac{1}{s_1 c^k + (1-s_1) \bar c^k },\tfrac{1}{s_2 c^k + (1-s_2) \bar c^k},...}_{ n - \bar m -   m}, \underbrace{c^{-k},...}_{  m}, \underbrace{\bar c^{-k},...}_{\bar m}\right).    
   \end{equation}
 The eigenvalues of $G_k$ (which we denote by $\nu_1,...,\nu_{n-m-\bar m}$, $\nu$, $\bar \nu$) have therefore the following asymptotic behavior for $k\to  +\infty $ 
 and for $k \to -\infty$ \ (all products below run  from $1$ to $n-m-\bar m$): 
 
 \begin{equation} \label{as}
 \begin{array}{c c c c}
 k\to + \infty & \nu_i(k)\sim \frac{c^{-(n-\bar m+1)k}    \bar c^{ - \bar mk} }{s_i\prod s_j} & \nu(k) \sim 
 \frac{c^{-(n-\bar m+1)k} \bar c^{ -\bar m  k} }{\prod s_j} & \bar \nu(k)\sim \frac{c^{-(n-\bar m)k} \bar c^{-(\bar m+1)k  } }{\prod s_j}\\ &&&\\
   k\to - \infty & \nu_i(k)\sim \frac{c^{- mk}   \bar c^{-(n-m+1) k } }{(1-s_i)\prod (1-s_j)} &   \nu(k) \sim 
 \frac{c^{-(m+1)  k}\bar c^{ - (n-m) k } }{\prod (1-s_j)} &  \bar \nu(k)\sim \frac{c^{- mk } \bar c^{-(n-m+1)  k} }{\prod (1- s_j)}
 \end{array}
\end{equation}
 
 Our next goal is to show that, unless the sectional curvature is constant,    we have  
 \begin{equation}\label{equality} 
 c^{n-\bar m} \le  \bar c^{-(\bar m+1)} \textrm{ \  and \ }  c^{m+1} \ge (\bar c)^{m-n}. 
 \end{equation}
 Before showing this, let us remark that the inequalities \eqref{equality} immediately imply the Proposition. 
 Indeed, dividing  the first  inequality by the second one, 
 $$
c^{n-m-\bar m-1}  \le (\bar c)^{n-m-\bar m-1}.  
 $$
 Since by assumptions $c>\bar c$, this implies $n-m-\bar m=1$ as we claim. Now, substituting $n-m-\bar m=1 $ in \eqref{equality}, we obtain 
 $$c^{m+1} \le  \bar c^{-(\bar m+1)} \textrm{ \  and \ }  c^{m+1} \ge \bar c^{-(m+1)}, $$
 implying   $c^{m+1} =\bar c^{ -\bar m -1}$ as we want.

 Assume the first inequality of \eqref{equality} is not fulfilled. The proof in the case when the second inequality of \eqref{equality} is not fulfilled is similar (and  actually, the first and the second inequalities  interchange  when we replace $\phi$  by $\phi^{-1}$ which of course corresponds to the automorphism 
  $k\longleftrightarrow -k$ of $\mathbb{Z}$).

Then, by \eqref{as},  all eigenvalues of $G_k$ decay exponentially for $t\to \infty$. Consider the sequence of the points $p, \phi^1(p), \phi^{2}(p),\phi^{3}(p),...$ . This sequence is a Cauchy sequence. 
Indeed, since  all eigenvalues of $G_k $ decay exponentially with $k\to \infty$,
   the distance  between $\phi^{\ell }(p)  $ and  $\phi^{(\ell+1)}(p)  $  also decays 
    at least exponentially for  $\ell \to \infty$ and the sequence is a Cauchy sequence.

 Since the manifold is complete, the Cauchy sequence $p, \phi^1(p), \phi^{2}(p),\phi^{3}(p),...$.  
 converges; we denote its limit by $P$.

  We    consider the  projectively invariant tensors we constructed in \S \ref{sec:4}:  Weyl tensor $W^i_{\ jk\ell}$ if $n\ge 3$ and Liouville tensor $L_{ijk}$ if $n=2$. Next, consider  
 the following smooth 
 function $F$ on our manifold: if $n\ge 3$, put 
 $$
 F  = W^i_{\ js\ell} W^{i'}_{\ j's'\ell'} g_{ii'}  g^{jj'}g^{s'}g^{\ell\ell'}.
 $$
 If $n=2$, put $F= g^{ii'}  g^{jj'}g^{ss'} L_{ijs}
L_{i'j's'}.$
  
 Since the function is continuous, we have $F(P)= \lim_{k \to \infty} F(\phi^k(p))$. From the other side, since $W$ and $L$
are  projectively invariant, \begin{equation} \label{pro}  F(\phi^k(p))=  
 W^i_{\ js\ell} W^{i'}_{\ j's'\ell'} \bar g(k)_{ii'}  \bar g(k)^{jj'}\bar g(k)^{ss'}\bar g(k)^{\ell\ell'} \end{equation}  
for dimension $n\ge 3$  and 
\begin{equation} \label{Pro1}  F(\phi^k(p))=  
 g(k)^{ii'}  g(k)^{jj'}g(k)^{s's} L_{ijs}
L_{i'j's'}.  \end{equation}

We consider a basis in  $T_pM$ such that the matrices of $g$, $\sigma$ and $\bar \sigma$ are as in  \eqref{diagonal}. 
We see that   in dimension 2 the sum \eqref{Pro1} is a the sum of nonnegative numbers    $\left(L_{ijs}\right)^2 $ with coefficients which are  products of reciprocals  to the diagonal entries of $G(k)$ and therefore grow exponentially for $k\to \infty$ in view of \eqref{as}. Thus, would  at least one of the numbers  $L_{ijs} $ be different from zero, 
the sum \eqref{Pro1} would be unbounded for $k\to \infty$. But it is bounded since it converges, for $k\to \infty$, to $F(P)$. Thus, $L_{ijs}$ is zero at the point $p$, and since the point $p$ was generic, $ L_{ijs}$ is identically zero and hence, by  Proposition \ref{prop:dim2}, the metric has constant sectional curvature. Now, by \cite[Corollary 6]{Matveev2007} (or \cite{Bonahon1993}) the sectional curvature is positive and we are done.

If the dimension $n\ge 3$, essentially the same idea works but one should be slightly more accurate, and the reason for it that in the formula \eqref{pro} we multiply 3 times by the  reciprocals of some  diagonal components of $G(k)$, and one times by a  diagonal component of $G(k)$. Since different components of $G(k)$ have different asymptotic, one may conceive the situation  when the sum \eqref{pro}  is bounded. Let us explain how we overcome this difficulty.

First consider  an example. Suppose  the index  $i$ lies in 
 $\{n - \bar m-  m+1  ,...,n-m\}$,  the  index  $j$ lies  in  $ \{ n - \bar m-  m+1  ,...,n-m\}$,    and the indices  $s$ and $\ell $ lie in $\{ n - \bar m+1  ,...,n\}$. Then,  the sum \eqref{pro} is a sum of nonnegative terms  containing $\left(W^{i}_{\ js\ell}\right)^2$  
  multiplied by a positive coefficient which behaves, for $k\to \infty$ and up to multiplcation with a positive constant, as 
 $$\underbrace{c^{-(n-\bar m+1)k}\bar c^{ - \bar m k}}_{\nu(k)}\underbrace{  c^{ (n-\bar m+1)k} \bar c^{ \bar m  k} }_{1/\nu(k)}
 \underbrace{ c^{(n-\bar m)k} \bar c^{(\bar m+1)  k}}_{1/\bar \nu(k)}  \underbrace{ c^{(n-\bar m)k}  \bar c^{ (\bar m+1) k}}_{1/\bar \nu(k)}=  c^{2(n-\bar m)k} \bar c^{2(\bar m+1) k}.$$
Indeed, the coefficient  $g_{ii'}$ in \eqref{pro} is (up to a positive constant)
$c^{-(n-\bar m+1)k}\bar c^{ - \bar m k}$, the coefficient $g_{jj'}$ is also $ c^{-(n-\bar m+1)k}\bar c^{ - \bar m k}$ so  it cancels with $g_{ii'}$, 
   and   the coefficients $g^{ss'}$ and $g^{\ell\ell'}$ are, up  to a  positive multiple, $c^{(n-\bar m)k}  \bar c^{ (\bar m+1) k}$.

Let us now show that $W^i_{\ js\ell}=0$, if  at least one of the indices $j,s, \ell$ lies in  $    \{1,..., n-  \bar m- m\}$. Assume that this is not the case. Then, as in the example above, one shows that  the sum \eqref{pro} contains the term 
$$
  \left(W^{i}_{\ js\ell}\right)^2. 
$$
multiplied  by a coefficient that 
  behaves for $k\to \infty$ at least as  $ c^{(n-\bar m)k}\bar c^{(\bar m+1)k }$.  This give us a contradiction unless $W^{i}_{\ js \ell}=0$.

Thus, for any vector $v^i$ such that all components of $v^i$  with $i>n-m-\bar m$   are zero, we have   $W^i_{\ jk\ell} v^j=0$. Then, the Weyl tensor has a (nontrivial) nullity  at the point $p$  in the terminology of \cite{GoverMatveev2015}. Since the point $p$ was generic,    Weyl tensor  has a  nullity  everywhere. Metrics with this condition   were studied in  \cite{GoverMatveev2015}, in particular  it was shown there, see \cite[Theorem 37]{GoverMatveev2015},  the existence of an symmetric tensor $\Phi_{ij}$ such that it is  projectively invariant   and such that it vanishes if and only if the metric $g$ is an Einstein metric. Note that the tensor $\Phi_{ij}$ may fail to be smooth, but it is always continuous, which is sufficient for our goals. Now, repeating with the tensor $\Phi_{ij}$ the same arguments we did with $L_{ij s}$, we obtain that it must vanish (all indexes are low so the problem we had with  $W^{i}_{\ js \ell}$ and which was due to  the upper index   does not appear). Thus, $\Phi_{ab}\equiv 0$, so the metric is Einstein by \cite[Corollary 3.17]{GoverMatveev2015}. 

Now,  complete 
 Einstein metrics (of dimension $\ge 3$) do not allow  nonaffine projective transformations by  \cite{KiosakMatveev2009}, unless the sectional curvature is constant and positive.  Proposition \ref{lemma3} is proved.

 \begin{remark} 
We have seen that the proof of  Proposition \ref{lemma3} contains two important steps: in the first step we have shown that (if what we claim is not fulfilled) then for each generic point $p$ 
 the sequence $\phi(p)$,  $\phi^2(p)$,... converges. In the second step we analyzed projectively invariant  tensors constructed in  \S \ref{sec:4}  and have (in the more complicated case of dimension $\ge 3$) that the Weyl tensor has nullity; then certain nontrivial results of \cite{GoverMatveev2015} and \cite{KiosakMatveev2009}. 
The rough  scheme of the proof of the remaining case will be essentially the same, but the arguments will be more delicate. In particular in order to show convergence we need to improve our projective transformation by composing it with a certain isometry, and in the proof that Weyl tensor has a nullity is based on additional observations.   
 \end{remark}

 \subsection{Remaining step in the proof of Proposition \ref{prop:main} }
 
 Thus, the only remaining case is when $n-m-\bar m=1$ (which means that besides the constant eigenvalues $0$ and $1$ we have only one eigenvalue which we denote by $\lambda$.)  Precisely this situation we considered in \S  \ref{intmul}. Moreover, we have that $c^{m+1} =\bar c^{ -\bar m -1}$. Then, the asymptotic behavior  \eqref{as} reads as follows: 
 \begin{equation} \label{as1}
 \begin{array}{c c c c}
 k\to + \infty & \nu_1(k)\sim  \tfrac{1}{s_1^2} \left(\tfrac{\bar c}{c}\right)^k    & \nu(k) \sim 
 \tfrac{1}{s_1 } \left(\tfrac{\bar c}{c}\right)^k & \bar \nu(k)\sim \tfrac{1}{s_1 }  \\ &&&\\
   k\to - \infty & \nu_1(k)\sim  \tfrac{1}{(1-s_1)^2} \left(\tfrac{ c}{\bar c}\right)^k  &   \nu(k) \sim 
 \tfrac{1}{(1-s_1)}   &  \bar \nu(k)\sim \tfrac{1}{(1-s_1)} \left(\tfrac{  c}{\bar c}\right)^k 
 \end{array}
\end{equation}

 Suppose first $M_0$ or $M_1$  defined in \S  \ref{intmul} is not empty. W.l.o.g., we can assume that  $M_0$ is not empty. 
 Clearly, the projective transformation $\phi$ preserves the sets $M_0$. Since $M_0$ contains at most two connected components, w.l.o.g. we can think that $\phi$ preserves each connected component  of $M_0$.     The asymptotic above  induces  contraction to  $M_0$ for $k\to +\infty$ and for $k\to -\infty$, so the restriction of the projective transformation 
  to $M_0$ is an isometry of $M_0$ w.r.t. the induced metric. But as we explained in \S \ref{isometry}, an isometry of $M_0$ induces an isometry of $M$ which we call $\psi$; the superposition $\psi^{-1} \circ \phi$ is also  a projective transformation which is not an affine transformation and the matrix $T$ for it coincides with $T_\phi$. Thus, we may  w.l.o.g.  replace $\phi$ by  $\psi^{-1} \circ \phi$, which implies that we assume  that each point of $M_0$ is   a fixed point of $\phi$.

  Take any generic point $p$ in $M$ and consider the shortest geodesic connecting $p$ with $M_0$, the endpoing of the geodesic lying at $M_0$ will be denote by $P$.  The geodesic is orthogonal to $M_0$ and therefore its velocity vector is at each point an eigenvector of $A$ with eigenvalue $\lambda$. 
 Then,  by the asymptotic \eqref{as1} above, the sequence $p, \phi(p), \phi^2(p),...,$ converges; clearly, its limit is  the point  $P$. 
 
 Let us now assume that the dimension is $n\ge 3$ and 
 consider  the function $F$ given by \eqref{pro}. 
 Arguing as in the end of  the proof of Proposition \ref{lemma3},  using continuity of this function, 
 we obtain that the components   $W^i_{\ 1k\ell}$ at the point $p$ 
  are zero for $i\ne 1$. But for $i=1$ it is also zero, since the component $R^1_{\ 1jk}$ of the curvature tensor is zero because of the symmetries of the curvature tensor, and the  component  $\delta^1_{\ k}  R_{1j}   - \delta^1_{\ j}  R_{1k}$  vanishes because the vector $v_1$ is an eigenvector of the Ricci tensor by \cite[Lemma 1]{KiosakMatveev2009}. Finally, the Weyl tensor has nullity. 
  
  Now, in the case both $m$ and $\bar m$ are not zero, the function $B$ from \cite{GoverMatveev2015} corresponding to the nullity is constant by  \cite[\S 5]{GoverMatveev2015}, and  the metric $g$  has  constant sectional curvature by \cite{KiosakMatveev2010}. 
  
  Thus, the remaining case is when $\bar m=n-1$ and $m=0$. In this case,  $M_0$ is a point or two points, and the metric  has a  concircular vector fields by   \cite[\S 5]{GoverMatveev2015}, which vanishes at the points of $M_0$. Then, the  isometry group of the manifold  contains $SO(n)$ which has fixed point at the points of $M_0$ and whose induced action on the tangents space to the points of $M_0$ is the standard action of $SO(n)$. The generic 
  orbits of   this  action are integral manifolds of the  distributions of the eigenspace of the eigenvalue $1$; therefore, they are compact (and in fact they are diffeomorphic to the $n-1$ spheres).

  Consider now   the projectively invariant tensor  $\Phi_{ij}$ from  \cite[Theorem 37]{GoverMatveev2015}. Arguing as at the end of the proof of Proposition \ref{prop:main}, we obtain that the velocity vectors of geodesic passing through points of $M_0$ lie in  the kernel of $\Phi_{ij}$.
  
  Now, the restriction of the projectively invariant tensor  $\Phi_{ij}$ to these integral manifold is either zero or nondegenerate. The second case is impossible since applying projective transformations $\phi$, $\phi^2$,$\phi^3$,...  to an orbit of the action   we obtain as the limit a  point of $M_0$ and the integral of $\sqrt{\operatorname{det}(\Phi)}$  over it is zero. Thus, $\Phi_{ij}$
    is  identically  zero, so the  metric $g$ is an Einstein metric and we are done by \cite{KiosakMatveev2009}.
  
  Similar, but more simple arguments work in dimension $n=2$: instead of formula  \eqref{pro} we need to use \eqref{Pro1}; arguing as above, we obtain that only the component 
  $L_{222}$  may be different from zero. But the component  $L_{222}$ must be zero because of the symmetries of $L$, and we are done. 
  
 Finally, the only remaining case is when $M_0=M_1=\emptyset$. We show that this case is impossible. Indeed, as explained in \cite[\S 4]{Matveev2003} (and follows directly from the splitting-gluing procedure for projectively equivalent metrics obtained in  \cite{BolsinovMatveev2011}, see also  \cite{BolsinovMatveev2015}),  in this case   
 the manifold is the direct product of $\mathbb{R}\times N  \times \bar N$, where $N$ is a simply-connected  $m$-dimensional manifold equipped with the metric $h$, $\bar N$ is a simply-connected  $\bar m$-dimensional manifold equipped with the metric $  \bar h$, and the metric on the manifold is given by the Levi-Civita formula \eqref{LCformula}, where $x_1$ is the coordinate on $\mathbb{R}$, 
 $x_2,...,x_{m +1}$ are local coordinates on $N$, and  $x_{m+2},...,x_{n}$ are local coordinates on $\bar N$. Since $M$ is complete, both manifolds $(N,h)$ and $(\bar N, \bar h)$ are complete.

Any isometry of $N$ or of $\bar N$ induces an isometry of $M$; and  any projective transformation of $M$ induces a homothety of $N$ and of $\bar N$. In the case if the projective transformation $\phi$ induces an isometry $\psi$  of $N$ or $\bar N$, one can ``correct'' $\phi$ with the help of $\psi$ (as we did above) such that the induces action of $\phi$ on $N$ or on $\bar N$ is identical.  If   the projective transformation $\phi$ induces a nonisometric homothety   of $N$ (or $\bar N$), then $N$ (or $\bar N$) is isometric to the euclidean $\mathbb{R}^n$ which allows a transitive group of isometries. In all cases, for any point $p \in M$, 
  by correcting $\phi$ by an isometry of $N$ and of $\bar N$  we can achieve that the geodesic passing through $p$ such that its initial velocity vector is an eigenvector of $A$ corresponding to $\lambda$ is invariant with respect to the projective transformation.   
  Using asymptotic \eqref{as1}, we imply then that the sequence $p, \phi(p), \phi^2(p), \phi^3(p),...$ converges. At the limit point we clearly have that    $\lambda$ is then equal to $0$ or to $1$, which implies that $M_0$ or $M_1$ are not empty and contradicts our assumptions.  
This finishes the proof of Proposion   \ref{prop:main}, and therefore the proof of Theorem \ref{thm:new}.

\end{document}